\let\mathcal\mathscr
\def\Z{{\mathbb Z}}
\def\Q{{\mathbb Q}}
\def\Mon{\mathop{\rm Mon}\nolimits}
\def\Pic{\mathop{\rm Pic}\nolimits}
\def\rk{\mathop{\rm rk}\nolimits}
\def\tilde{\widetilde}
\def\eps{\varepsilon}
\def\phi{\varphi}
\def\Def{\mathop{\rm Def}\nolimits}
\def\Pic{\mathop{\rm Pic}\nolimits}
\def\dim{\mathop{\rm dim}\nolimits}
\newtheorem{thm}{Theorem}[section]
\newtheorem{defn}[thm]{Definition}
\newtheorem{cor}[thm]{Corollary}
\newtheorem{rmk}[thm]{Remark}
\newtheorem{prop}[thm]{Proposition}
\newtheorem{ex}[thm]{Example}
\newtheorem{conj}[thm]{Conjecture}
\newcommand{\hk}{hyperk\"{a}hler }
\newcommand{\kntiposp}{$K3^{[n]}$ type }
\title[Density of Noether-Lefschetz loci]{Density of Noether-Lefschetz loci of polarized irreducible holomorphic symplectic varieties and applications}
\author{Giovanni Mongardi}
\address{Alma Mater studiorum Universitá di Bologna
Dipartimento di Matematica,
Piazza di Porta San Donato 5,
Bologna, 40126 Italia}
\email{giovanni.mongardi2@unibo.it}
\author{Gianluca Pacienza}
\address{Institut Elie Cartan de Lorraine,
Universit\'e de Lorraine,
B.P. 70239, F-54506 Vandoeuvre-lès-Nancy Cedex
 France}
\email{gianluca.pacienza@univ-lorraine.fr}
\date{\today}
\begin{document}

%
\begin{abstract}
In this note we derive from deep results due to Clozel-Ullmo the density of Noether-Lefschetz loci inside the moduli space of marked (polarized) irreducible holomorphic symplectic (IHS) varieties. 
In particular we obtain the density of Hilbert schemes of points on projective $K3$ surfaces and of projective generalized Kummer varieties in their moduli spaces. 
We present applications to the existence of rational curves on projective deformations of such varieties, to the study of relevant cones of divisors, 
and a refinement of Hassett's result on cubic fourfolds whose Fano variety of lines is isomorphic to a Hilbert scheme of 2 points on a K3 surface. 
We also discuss Voisin's conjecture on the existence of coisotropic subvarieties on IHS varieties and relate it to a stronger statement on Noether-Lefschetz loci in their moduli spaces. 

\end{abstract}
%

\maketitle

%
%
%
{\let\thefootnote\relax
\footnote{\hskip-1.2em
\textbf{Key-words :} Hilbert schemes of points on $K3$ surfaces; generalized Kummer varieties; holomorphic symplectic varieties; rational curves; coisotropic subvarieties; cubic fourfolds.\\
\noindent
\textbf{A.M.S.~classification :} 14C99, 14J28, 14J35, 14J40. \\
\noindent
G.P. was partially supported by the ANR project ``Foliage''. G.M. is supported by the project ``2013/10/E/ST1/00688'' and ``National Group for Algebraic and Geometric Structures, and their Application'' (GNSAGA - INdAM). The project was partially funded by GDRE ``GRIFGA''.
 }}
\numberwithin{equation}{section}


%
\section{Introduction}

Recently Markman and Mehrotra \cite[Theorems 1.1 and 4.1]{MaMe} and Anan'in and Verbitsky \cite{AV} have shown the density, in the corresponding moduli spaces, 
of Hilbert schemes of points on a $K3$ surface and of generalized Kummer varieties. 
The first purpose of this note is to check that the corresponding statement in the polarized case holds true. 
It turns out that a more general polarized density statement can be deduced without much effort from deep results contained in \cite {clull}. 
Precisely, we have the following (see Section 2 for all the relevant definitions and Section 3, Theorem \ref{thm:nl_density}, for a slightly more general statement).

\begin{thm}\label{thm:nl_density-intro}
Let $X$ be an irreducible holomorphic symplectic variety with $\Lambda= H^2(X,\mathbb Z)$ and $H$  a primitive ample line bundle on it. Let $\mathfrak M^0_\Lambda$ be a connected component of the moduli space of $\mathfrak M_\Lambda$  marked polarized deformations of $(X,H)$. 
Let $N\subset \Lambda$ be a sub-lattice of signature $(m,n)$, with $m\leq 1$.  Let us denote by $\mathfrak{D}_N$ be connected  the Noether-Lefschetz locus of points  $t\in\mathfrak{M}^0_\Lambda$ such that $N\subset \Pic({X}_t)$. 
Suppose that $b_2(X)-\mathrm{rank}(N)\geq 3$. Then, if not empty, the  locus $\mathfrak{D}_N$ is  dense in $\mathfrak{M}^0_\Lambda$  with respect to the euclidean topology. 
\end{thm}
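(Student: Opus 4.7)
The strategy is to translate the density statement, via the polarized period map, into the density of a countable union of strongly special subvarieties in a quotient of the Type IV period domain, and then to apply directly the main equidistribution theorem of Clozel-Ullmo from \cite{clull}.

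First I would set up the polarized period map. By Verbitsky's global Torelli theorem, the marked polarized moduli space $\mathfrak{M}^0_\Lambda$ maps through its period to an open subset of the Type IV domain
\[
\Omega \;:=\; \{[\sigma] \in \mathbb{P}(H^\perp \otimes \mathbb{C}) : (\sigma,\sigma) = 0,\; (\sigma,\bar\sigma) > 0\}^\circ,
\]
where $H^\perp \subset \Lambda$ is the primitive sublattice, of rank $b_2(X)-1$ and signature $(2,b_2(X)-3)$, so that $\dim_{\mathbb{C}} \Omega = b_2(X)-3$. A period point $[\sigma]$ corresponds to a deformation $X_t$ with $N \subset \Pic(X_t)$ if and only if $\sigma \perp p(N)$, where $p : \Lambda_{\mathbb{Q}} \twoheadrightarrow H^\perp_{\mathbb{Q}}$ denotes the orthogonal projection. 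Thus $\mathfrak{D}_N$ corresponds, on the period side, to the Type IV sub-locus
\[
\Omega_N \;:=\; \Omega \cap \mathbb{P}(p(N)^\perp \otimes \mathbb{C}),
\]
together with its images under the monodromy action.

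Second, I would verify that $\Omega_N$ is itself a Type IV sub-domain of positive dimension. The hypothesis $m \leq 1$ on the signature of $N$ guarantees that $p(N)$ is either negative definite (if $H \notin N_{\mathbb{Q}}$) or of signature $(1,\mathrm{rank}(N)-1)$ with positive direction spanned by $H$ (if $H \in N_{\mathbb{Q}}$); in both cases $p(N)^\perp \subset H^\perp$ retains signature $(2, \ast)$, so $\Omega_N$ is again a Type IV domain. A direct rank count using the hypothesis $b_2(X) - \mathrm{rank}(N) \geq 3$ then yields $\dim_{\mathbb{C}} \Omega_N \geq 1$.

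Third, I would pass to the arithmetic quotient $\Gamma \backslash \Omega$, where $\Gamma \subset O(H^\perp)$ is the monodromy group attached to the connected component $\mathfrak{M}^0_\Lambda$, and identify $\mathfrak{M}^0_\Lambda$ (up to the well-known Hausdorff subtleties of the IHS period map) with an open subset of this connected Shimura variety of orthogonal type. Under this identification, $\mathfrak{D}_N$ corresponds to the union $\bigcup_{\gamma \in \Gamma} \gamma \cdot \Omega_N \subset \Omega$, i.e., to a countable union of Hecke translates of a fixed positive-dimensional strongly special subvariety of $\Gamma \backslash \Omega$. At this stage Clozel-Ullmo's main density theorem in \cite{clull} applies: in any connected Shimura variety, the union of Hecke translates of a positive-dimensional strongly special subvariety is dense in the Euclidean topology. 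Pulling back through the open period map then gives the density of $\mathfrak{D}_N$ in $\mathfrak{M}^0_\Lambda$.

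The main obstacle I anticipate is not the arithmetic input itself, which is tailor-made for this application, but rather the careful bookkeeping between the marked and the unmarked (polarized) moduli, the exact monodromy group $\Gamma$ governing $\mathfrak{M}^0_\Lambda$, and the precise identification of $\Omega_N$ as a strongly special, rationally defined subvariety meeting the hypotheses of \cite{clull}. The condition $m \leq 1$ is essential at this step, since a second positive direction in $N$ would destroy the Type IV structure of $\Omega_N$ and remove the problem from the Clozel-Ullmo framework.
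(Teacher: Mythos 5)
Your framework (polarized period domain, orthogonal Shimura variety, Clozel--Ullmo) is the one the paper uses, and for $\mathrm{rank}(N)=1$ your argument essentially reproduces Proposition \ref{prop:ergodic}. For $\mathrm{rank}(N)\geq 2$, however, your third step has a genuine gap, in two places. First, the family of special subvarieties you feed into \cite{clull} is the wrong one: you take the translates $\gamma\cdot\Omega_N$ for $\gamma\in\Gamma$ with $\Gamma\subset O(H^\perp)$ the polarized monodromy group, but these all descend to a \emph{single} closed special subvariety of $\Gamma\backslash\Omega$, whose Euclidean closure is itself, so there is nothing to equidistribute. The locus $\mathfrak D_N$ is the set of periods orthogonal to some translate of $N$ under the full $O(\Lambda)$ (or $\Mon^2(\Lambda)$), and it is only because $O(\Lambda)/\mathrm{Stab}(h)$ is infinite --- so that the projections of these translates into $h^\perp$ have infinitely many distinct squares --- that one obtains infinitely many \emph{distinct} special subvarieties of $\Gamma\backslash\Omega$. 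This counting step (``$q(h,g\lambda)$ takes infinitely many values'') is exactly what the proof of Proposition \ref{prop:ergodic} supplies and what your proposal omits.

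Second, even with the correct infinite family, a single direct application of Theorem \ref{thm:clull} to the codimension-$\mathrm{rank}(N)$ loci does not yield density. The theorem (which is about limits of sequences of strongly special subvarieties, not a ready-made statement that unions of Hecke translates are dense) only produces \emph{some} subsequence whose closure is a strongly special $\mathcal M$ containing its members; the dimension argument forcing $\mathcal M$ to be the whole space works only for divisors, since a proper strongly special subvariety can contain at most one special divisor but can contain infinitely many of the higher-codimension translates (for instance, all translates by elements stabilizing $l_1$ lie inside the one special divisor attached to $l_1$). So the subsequence Clozel--Ullmo hands you may accumulate on a proper subvariety, and you cannot conclude that the full union is dense. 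This is precisely why the paper does not apply \cite{clull} once to $\Omega_N$, but instead inducts on $\mathrm{rank}(N)$: it chooses an orthogonal $\mathbb Q$-basis $l_1,\dots,l_{a+b}$ of $N$, uses the inductive hypothesis to get density of the loci orthogonal to translates of $M_r=\langle l_1,\dots,l_r\rangle$, and then applies the divisor case inside each period domain $\Omega_{M_r^\perp}$ to cut down by $l_{r+1}$. Without this induction (or an equivalent device ruling out accumulation on proper special subvarieties) your final step does not go through.
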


Notice that as we require $N\subset \Pic(X)$ and the signature of the Beauville-Bogomolov quadratic form on $\Pic(X)$ is $(1, \rk\Pic(X) -1)$,  the condition on the signature of $N$ is a necessary one. 
By taking the sub-lattice generated by the exceptional class we immediately deduce the following. 

\begin{cor}\label{thm:density} Let $\Lambda$ be a $K3^{[n]}$ lattice (respectively a generalized Kummer lattice). 
Let $\mathfrak M^0_\Lambda$ be a connected component of $\mathfrak M_\Lambda$
containing a marked Hilbert scheme of $n$-points on a $K3$ surface (respectively a marked generalized Kummer variety). 
Let $h\in \Lambda$ be a class with square $(h,h)>0$ and consider  the locus $\mathfrak M^+_{h^{\perp}}\subset \mathfrak M^0_\Lambda$ of points where the class $h$ remains algebraic and belongs to the positive cone.
The locus in
 $\mathfrak M^+_{h^{\perp}}$ consisting of marked pairs $(X,\phi)$ 
such that  $X$ is birational to the Hilbert scheme $S^{[n]}$ for some projective $K3$ surface $S$ (respectively to a generalized Kummer variety $K_n(A)$, for some abelian surface $A$) is dense  in $\mathfrak M^+_{h^{\perp}}$ (in the euclidean topology).
\end{cor}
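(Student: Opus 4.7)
The plan is to apply Theorem~\ref{thm:nl_density-intro} inside $\mathfrak{M}^+_{h^\perp}$ --- viewed as (a connected component of) the marked moduli of IHS varieties polarized by $h$ --- to the rank-one sub-lattice generated by the \emph{exceptional class} $\delta \in \Lambda$. Here $\delta$ is the primitive class of Beauville--Bogomolov square $-2(n-1)$ (resp.\ $-2(n+1)$) which, under the marking $\phi_0$ of a reference Hilbert scheme $(S^{[n]}, \phi_0) \in \mathfrak{M}^0_\Lambda$ (resp.\ of a reference generalized Kummer $(K_n(A), \phi_0)$), corresponds to half of the exceptional divisor of the Hilbert--Chow morphism (resp.\ of its Kummer analogue). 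Setting $N := \langle \delta \rangle$, the signature $(0,1)$ satisfies $m = 0 \le 1$, and $b_2(X) - \mathrm{rank}(N) = b_2(X) - 1 \ge 3$ in both cases ($b_2 = 23$, resp.\ $7$); all hypotheses of Theorem~\ref{thm:nl_density-intro} are verified.

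Non-emptiness of the Noether--Lefschetz locus $\mathfrak{D}_N \subset \mathfrak{M}^+_{h^\perp}$ is to be checked by direct construction: starting from a projective $K3$ surface $S$ (resp.\ abelian surface $A$) whose Picard lattice contains, after the identification induced by the marking, a class making $\langle h, \delta \rangle \subset \Pic(S^{[n]}) = \Pic(S) \oplus \mathbb{Z}\delta$ (resp.\ its Kummer analogue) with $h$ in the positive cone, the resulting marked pair --- possibly after pre-composing with a suitable monodromy transformation to land in the prescribed component $\mathfrak{M}^0_\Lambda$ --- furnishes a point of $\mathfrak{D}_N \cap \mathfrak{M}^+_{h^\perp}$. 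Theorem~\ref{thm:nl_density-intro} then yields that $\mathfrak{D}_N$ is dense in $\mathfrak{M}^+_{h^\perp}$ in the Euclidean topology.

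The last and most delicate step is to prove that every $(X, \phi) \in \mathfrak{D}_N$ is birational to a Hilbert scheme $S^{[n]}$ (resp.\ generalized Kummer $K_n(A)$) with $S$ (resp.\ $A$) projective. Since the component $\mathfrak{M}^0_\Lambda$ was chosen to contain a marked Hilbert scheme (resp.\ Kummer), the monodromy orbit of $\delta$ in $\Lambda$ is the distinguished one of Hilbert--Chow-type exceptional classes (resp.\ its Kummer analogue); the algebraicity of $\phi^{-1}(\delta) \in \Pic(X)$ should then force, via Markman's theory of divisorial contractions on IHS varieties (the essential input already used in \cite{MaMe} to establish the unpolarized density), a birational contraction of Hilbert--Chow type from a birational model of $X$ onto the symmetric product of some $K3$ surface $S$ (resp.\ onto a Kummer-type quotient of some abelian surface $A$), whence $X$ is birational to $S^{[n]}$ (resp.\ $K_n(A)$); projectivity of $S$ (resp.\ $A$) will be inherited from the polarization $h$ on $X$. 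The main obstacle is precisely this last identification, which requires verifying that $\phi^{-1}(\delta)$ lies in the correct monodromy orbit --- an issue resolved by working inside the fixed connected component $\mathfrak{M}^0_\Lambda$.
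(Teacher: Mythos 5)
Your first two steps (taking $N=\langle\delta\rangle$ with $\delta$ the exceptional class of square $-2(n-1)$, resp.\ $-2(n+1)$, checking signature and rank, and invoking the density theorem) coincide with the paper's, which applies Theorem~\ref{thm:nl_density} to the rank-one lattice $\langle -2(n-1)\rangle$ with a specific choice of embedding. The problem is your final step. The paper's identification of the points of $\mathfrak{D}_N$ is purely Hodge-theoretic: the embedding of $N$ is chosen so that $N^\perp\subset\Lambda$ is \emph{unimodular}, hence isometric to the $K3$ lattice; by surjectivity of the period map for $K3$ surfaces there is a (projective, since the projection of $h$ gives a positive rational $(1,1)$-class) $K3$ surface $S$ whose $H^2$ carries exactly the Hodge structure of $\phi_t^{-1}(\delta)^\perp$; then $H^2(S^{[n]},\Z)$ and $H^2(X_t,\Z)$ are Hodge-isometric, and working with the $\Mon^2(\Lambda)$-orbit of $N$ (legitimate by Remark~\ref{rmk:puremon2}) makes this isometry a parallel-transport operator, so Theorem~\ref{thm:corhuy} gives the birational equivalence. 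This is the same mechanism as in \cite{MaMe}.

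Your proposal never produces the $K3$ surface $S$: you assert that algebraicity of $\phi^{-1}(\delta)$ in the right monodromy orbit ``should force, via Markman's theory of divisorial contractions,'' a Hilbert--Chow-type contraction onto $\mathrm{Sym}^n(S)$. That is a genuine gap, not a routine verification. Markman's results on monodromy-reflective classes would at best give that $\pm\phi^{-1}(\delta)$ is stably prime-exceptional and contractible on some birational model; identifying the target of that contraction as the symmetric product of a $K3$ is precisely the hard content, and the only available route to it is to first exhibit the $K3$ surface $S$ from the Hodge structure on $\delta^\perp$ (unimodularity of $N^\perp$ plus surjectivity of the $K3$ period map) and then apply the birational Torelli theorem --- i.e.\ the argument you are trying to avoid. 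Without that input your last paragraph is circular in spirit and incomplete in substance: no statement you cite outputs ``$X$ is birational to $S^{[n]}$ for a \emph{projective} $K3$ surface $S$'' from the mere algebraicity of the exceptional class. Replace the contraction argument by the unimodularity/Torelli argument and the proof closes.
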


Although largely expected to be true we believe that these density statements can be very useful in practice. For this reason we think it is convenient to have a general density result
such as Theorem  \ref{thm:nl_density}
that can be easily applied in very different geometrically meaningful contexts. 
As an illustration of this we present several  applications and hope that others will follow. 
The first one concerns the existence of special subvarieties of IHS varieties. 
Precisely we discuss the existence of coisotropic subvariety with constant cycle orbits (see Section 4 for the definitions and the motivation), 
predicted by a conjecture by Voisin \cite{V15}. Using Theorem \ref{thm:nl_density-intro} we observe that to prove a strengthtening of this conjecture 
it is sufficient to check it on a Noether-Lefschetz sublocus in the moduli space (see again Section 4, and in particular Theorem \ref{thm:equiv}, for the precise statements).  

As a by-product of our approach we then provide a proof
of the existence of primitive rational curves whose Beauville-Bogomolov dual lies in (a multiple of) any ample linear system on deformations of Hilbert schemes of points on a $K3$ surface, respectively of generalized Kummer varieties. The primitivity of the curve appears to be significant in light of the recent preprint \cite{OSY} where the authors show that it is not always possible to rule a divisor with rational curves of primitive class.  
The existence of primitive rational curves has already found an application in \cite{MO}. 


We further prove that the density statement obtained in Theorem \ref{thm:nl_density} yields a refinement of a result due to Hassett. Namely we show the density,  
among special cubic fourfolds of any discriminant $d$, of those whose Fano varieties of lines are birational to an Hilbert square of a K3 of fixed degree. 

Then we turn to cones of nef divisors on IHS varieties and deduce from the shape of these cones on certain dense subloci in the moduli space the same information on the whole moduli space. 
This was first observed in \cite{BHTv1} for deformations of Hilbert schemes of points on $K3$'s and works the same for deformations of generalized Kummer varieties.


Finally as a by-product of our Theorem \ref{thm:nl_density} (see Corollary \ref{ex:moduli} for the precise statement) we obtain the density of moduli spaces of sheaves on a K3 (or on an abelian surface) 
inside the Noether-Lefschetz locus  of IHS of $K3^{[n]}$ (or generalized Kummer)-type  possessing a non-zero isotropic class. This is one of the key steps in 
\cite{matsu-isotrop} where Matsushita proves a famous conjecture about the numerical characterization of the existence of a (rational) lagrangian fibration for 
IHS manifolds of $K3^{[n]}$ (or generalized Kummer)-type. 


{\bf Acknowledgments.} We wish to thank O. Benoist, B. Hassett, E. Macr\`i and M. Verbitsky for useful conversations at different stages of this work. We also thank Qizheng Yin for pointing out several inaccuracies in the first version of the paper.  

\section{Preliminaries}
%
For the basic theory of irreducible holomorphic symplectic (IHS) manifolds we refer the reader to \cite{Beauville83,huy_basic}.
Let $X$ be an IHS manifold  and let $\Lambda$ be a lattice such that $H^2(X,\mathbb{Z})\cong \Lambda$. A marking $\phi$ of $X$ is an isometry $\phi:H^2(X,\mathbb{Z})\cong \Lambda$. A marked IHS manifold is a pair $(X,\phi)$, where $X$ is a IHS manifold and $\phi$ a marking of $X$. A symplectic form on $X$ will be denoted by $\sigma_X$. The square of a class $a\in H^2(X,\mathbb Z)$ with respect to the Beauville-Bogomolov quadratic form on $X$ will be denoted by $a^2$.


Recall that the  the {\it positive cone} $\mathcal{C}_X$ is the connected component of the cone of positive classes (with respect to the Beauville-Bogomolov quadratic form) containing a K\"ahler class. Let $\mathfrak{M}_\Lambda^0$ be the connected component of the moduli space of marked IHS manifolds containing $(X,\phi)$. Let $\Omega_\Lambda\cong Gr_{++}^{or}(\Lambda\otimes\mathbb{R})$ be the period domain, which is parametrised by positive oriented two planes inside $\Lambda\otimes\mathbb{R}$. Let 
$$
\mathfrak{p}\,:\,\mathfrak{M}_{\Lambda}^0\rightarrow \Omega_{\Lambda}
$$ be the period map, where $\mathfrak{p}(X,\phi)$ is the positive oriented plane generated by $\phi(\sigma_X+\overline{\sigma}_X)$ and $i\phi(\sigma_X-\overline{\sigma}_X)$. 
Let $h\in \Lambda$ be a class of positive square and consider the sublattice $h^\perp$. Let $\Omega_{h^{\perp}}\subset \Omega_{\Lambda}$ be the set of periods orthogonal to $h$, 
which is isomorphic to $\Omega_{h^\perp}:=Gr_{++}^{or}(h^\perp\otimes\mathbb{R})$. Let $\mathfrak{M}^+_{h^\perp}$ be the set $\{(Y,\psi)\in \mathfrak{M}^0_\Lambda,\,\text{such that } \psi^{-1}(h)\in \mathcal{C}_Y\}$.\\
The restricted period map from $\mathfrak{M}^+_{h^\perp}$ to $\Omega_{h^\perp}$ is surjective, by \cite{huy_basic} and generically injective (this is a direct consequence of Verbitski's global Torelli theorem \cite{Verbitsky}, as the positive cone $\mathcal{C}_Y$ coincides with the cone of K\"ahler classes for very general $(Y,\psi)\in\mathfrak{M}^+_{h^\perp}$, see \cite[Theorem 2.2, item (2) and (4)]{Mark-survey}, \cite[Proposition 5.3]{Mark-survey} and \cite[Corollaries 5.7 and 7.2]{huy_basic}). 
A version of global Torelli that we will use is the following. 
\begin{thm}[\cite{HuyBourb}, Corollary  6.2]\label{thm:corhuy}
Two IHS manifolds $X$ and $X'$ are bimeromorphic if and only if there exists a Hodge isometry $\phi:H^2(X,\mathbb Z)\to H^2(X',\mathbb Z)$ which is composition of maps induced by 
isomorphisms and parallel transport.
\end{thm}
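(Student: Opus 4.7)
The plan is to derive Theorem~\ref{thm:corhuy} from Verbitsky's Global Torelli theorem for marked IHS manifolds \cite{Verbitsky} combined with Huybrechts' description of non-separated fibers of the period map.

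For the forward implication, suppose $X$ and $X'$ are bimeromorphic. A bimeromorphic map between IHS manifolds is an isomorphism in codimension one, hence it automatically induces a Hodge isometry $\psi : H^2(X, \mathbb Z) \to H^2(X', \mathbb Z)$. To exhibit $\psi$ in the required form, I would invoke Huybrechts' construction of a smooth proper family $\mathcal X \to \Delta$ over a small disk whose fibers are pairwise bimeromorphic and include (up to isomorphism) both $X$ and $X'$; such a family exists precisely because bimeromorphic IHS manifolds agree outside a subset of codimension at least two. Then $\psi$ is recovered as the composition of the isomorphisms identifying $X$ and $X'$ with the two fibers, together with parallel transport along a path in $\Delta$.

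For the converse, let $\phi : H^2(X, \mathbb Z) \to H^2(X', \mathbb Z)$ be a Hodge isometry that is a composition of parallel-transport maps and isomorphism-induced maps. Using the parallel-transport factors in this decomposition, one chooses markings $\phi_X$, $\phi_{X'}$ placing $(X,\phi_X)$ and $(X',\phi_{X'})$ in the \emph{same} connected component $\mathfrak M^0_\Lambda$ and satisfying $\phi = \phi_{X'}^{-1}\circ \phi_X$. Since $\phi$ is a Hodge isometry, the periods $\mathfrak p(X,\phi_X)$ and $\mathfrak p(X',\phi_{X'})$ coincide in $\Omega_\Lambda$. Verbitsky's Global Torelli theorem then asserts that two marked pairs in $\mathfrak M^0_\Lambda$ with the same period are either isomorphic by an isomorphism compatible with the markings, or correspond to two non-separated points of $\mathfrak M^0_\Lambda$. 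A further theorem of Huybrechts identifies non-separated points of $\mathfrak M^0_\Lambda$ as birational equivalence classes, so $X$ and $X'$ are bimeromorphic in both cases.

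The main obstacle is Verbitsky's Global Torelli theorem itself, whose proof uses hyperk\"ahler twistor families and a delicate analysis of the monodromy action on $H^2$; in the present argument it is treated as a black box. The second technical input, namely Huybrechts' identification of non-separated fibers of the period map with bimeromorphic IHS manifolds, is itself nontrivial and uses Douady-space techniques to upgrade a formal limit identification to an actual bimeromorphic correspondence. The genuine work performed on top of these inputs is the bookkeeping needed to show that the specific form of $\phi$ --- Hodge isometry plus parallel-transport/isomorphism decomposition --- is strong enough to place $(X,\phi_X)$ and $(X',\phi_{X'})$ over the same period point of the same component of $\mathfrak M_\Lambda$.
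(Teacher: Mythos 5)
The paper offers no proof of this statement: it is quoted verbatim as Corollary~6.2 of Huybrechts' Bourbaki expos\'e \cite{HuyBourb}, and your sketch reproduces the standard derivation given there --- Verbitsky's Global Torelli theorem (injectivity of the period map up to non-separation on each connected component of $\mathfrak M_\Lambda$) combined with Huybrechts' identification of non-separated points with bimeromorphic pairs --- so it is consistent with the intended argument. The one imprecision is in the forward direction: Huybrechts' non-separatedness result produces \emph{two} smooth proper families over a disk, with central fibres $X$ and $X'$ respectively, that are isomorphic over the punctured disk, rather than a single family containing both manifolds as fibres; the required decomposition of $\psi$ is then parallel transport in the first family, followed by the isomorphism of nearby fibres, followed by parallel transport back in the second.
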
 
The local complex structure of $\mathfrak{M}^+_{h^\perp}$ is given by the local deformation space $\Def(Y,\psi^{-1}(h))$ which parametrizes deformations of $Y$ where the class $\psi^{-1}(h)$ remains algebraic.
We have a natural quotient map $\Omega_{h^\perp} \rightarrow \Omega_{h^\perp}/\Mon^2(\Lambda,h):=\mathcal{F}_h$, where $\Mon^2(\Lambda,h)$ is the subgroup of the monodromy group $\Mon^2(\Lambda)\subset O(\Lambda)$ of 
parallel transport operators fixing the class $h\in \Lambda$. Such groups have been determined for manifolds of $K3^{[n]}$ or Kummer type in \cite{Mark-prime} and \cite{Mon2}. This quotient map induces a quotient map $\mathfrak{M}^+_{h^\perp}\rightarrow \mathcal{F}_h$ and moreover the space $\mathcal{F}_h$ is quasi-projective as proven in \cite[Thm. 3.7]{GriHulSan10}. 
\begin{rmk}\label{rmk:fam}
{\em 
In the following, we will sometimes work with the local deformation space $$\Def(X,\psi^{-1}(h)).$$
Using the Torelli theorems above, plus the study of the monodromy group  one gets (cf. \cite[Theorem 1.10]{Mark-survey}) that its
 quotient by the group $\Mon^2(X,h)$ can be considered as an euclidean open subset of the algebraic space $\mathcal{F}_h$. Therefore algebraic families like the Hilbert scheme are well defined in this local setting.\\ }
\end{rmk}
For our purposes it will be relevant also to use deformations of pairs $(X,[C])$, where $X$ is IHS and $C$ is a curve on it, therefore we will make frequent use of the following duality, induced by the quadratic form on $H^2(X)$: we embed $H_{2}(X,\Z)$ into $H^2(X,\Z)$ by the usual embedding of lattices $\Lambda^\vee\rightarrow \Lambda\otimes \Q$ (that is, we use the intersection pairing between $H^2$ and $H_2$ to see an element $[C]$ of the latter as the form $[C]\cdot -$). We call $D$ the dual divisor to a primitive curve $C\in H^2(X,\Q)$ if $D=aC$ for a positive integer $a$ and $D$ is primitive. Conversely, we call $C$ the dual curve to a primitive divisor $D$ if $C=D/div(D)$, where $div(D)$ is the positive generator of the ideal $D\cdot H^2(X,\mathbb Z)$.  




%
\section{Density of Noether-Lefschetz loci}
%
 
Let $\Lambda$ be a lattice of signature $(3,n)$, $n\geq 2$ and let $h\in \Lambda$ be an element of positive square. Let $L$ be $h^\perp$ and let $t\in L$ be a element of negative square. Let $G\subset O(\Lambda)$ be a subgroup of finite index of the orthogonal group $O(\Lambda)$. Let $\Omega_L:=Gr_{++}^{or}(2,L\otimes\mathbb{R})$ be the Grassmannian of positive oriented two planes, which is the period domain associated to the lattice $L$. 
Notice that 
$$
 Gr_{++}^{or}(2,L\otimes\mathbb{R}) = SO(2,n)_{/SO(2)\times SO(n)}
$$
(see e.g. \cite[Sections 1.7 and 2.4]{Verbitsky}).
In this section, we will prove that the set of periods orthogonal to an element in the $G$ orbit of $t$ is dense. In particular, we will apply this result when $\Lambda\cong H^2(X,\mathbb{Z})$ for some IHS manifold $X$. 
This density result in the non polarized setting (that is, inside $\Omega_\Lambda:=Gr_{++}^{or}(2,\Lambda\otimes\mathbb{R}$)), was proved by Anan'in and Verbitsky:
\begin{prop}[Proposition 3.2 and Remark 3.12 \cite{AV}]\label{lem:anve}
Let $T$ be a lattice of signature $(m,r)$, $m\geq 3$ and $r\geq 1$. Let $\Gamma$ be a group of finite index in $O(T)$ and let $\tau$ be an element of $T$.  Let $Gr_{++}^{or}(2,T\otimes\mathbb{R})$ be the Grassmannian of positive oriented two planes.
The set of elements in $Gr_{++}^{or}(2,T\otimes\mathbb{R})$ orthogonal to an element in the orbit of $\tau$ by the group $\Gamma$ is dense in $Gr_{++}^{or}(2,T\otimes\mathbb{R})$. 
\end{prop}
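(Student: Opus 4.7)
The plan is to recast the desired density in $Y := Gr^{or}_{++}(2, T\otimes\mathbb{R})$ as density of a specific orbit on the arithmetic quotient $\Gamma\backslash G$, and invoke the Moore--Ratner ergodic machinery.

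Set $G := SO^+(T\otimes\mathbb{R})$. By Borel--Harish-Chandra, $\Gamma$ is an arithmetic lattice in $G$. Identify $Y$ with $G/K$, where $K = \mathrm{Stab}_G(P_0) \cong SO(2)\times SO^+(m-2,r)$ is the isotropy of a reference positive oriented $2$-plane $P_0$, and let $H := \mathrm{Stab}_G(\tau) = SO^+(\tau^\perp\otimes\mathbb{R})$. Choosing $P_0 \subset \tau^\perp$, the sub-Grassmannian $\Omega_{\tau^\perp}$ coincides with the $H$-orbit of $P_0$. Since $\Gamma$ preserves the form, $\gamma\Omega_{\tau^\perp} = \Omega_{(\gamma\tau)^\perp}$, so
$$\bigcup_{\gamma \in \Gamma} \Omega_{(\gamma\tau)^\perp} = \Gamma H \cdot P_0,$$
whose density in $Y = G/K$ is equivalent to the density of the right orbit $HK \cdot [e]$ in $\Gamma\backslash G$.

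The hypotheses $m\geq 3$, $r\geq 1$ imply $\tau^\perp$ has rank at least $3$ and is indefinite, except in the corner case $r = 1$, $\tau^2 < 0$, where $H \cong SO(m)$ is compact (this is the content of Remark 3.12 of \cite{AV}, handled by a separate geometric argument). Outside this case, $H$ is semisimple and non-compact, so Moore's ergodicity theorem gives that the $H$-action on $\Gamma\backslash G$ is ergodic, and Ratner's orbit closure theorem yields $\overline{H\cdot x} = L_x\cdot x$ for some closed subgroup $L_x \supset H$. The $H$-representation on $\mathfrak{g}/\mathfrak{h}$ is identified with the standard vector representation of $SO^+(\tau^\perp)$ on $\tau^\perp\otimes\mathbb{R}$, which is irreducible under our hypotheses; hence $H$ is a maximal connected Lie subgroup of $G$, forcing $L_x \in \{H, G\}$. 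The equality $L_x = H$ amounts to a $\mathbb{Q}$-commensurability condition on $x$ that fails outside a meager set; in particular, for some $k \in K$ the orbit $H\cdot[k]$ is dense in $\Gamma\backslash G$, so $HK\cdot[e] \supseteq H\cdot[k]$ is dense, yielding the theorem.

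The main obstacle is the dynamical input: although Moore's ergodicity is standard, passing from a statement about generic orbits to density of the specific orbit $HK\cdot[e]$ requires the finer orbit-closure classification of Ratner together with the maximality argument for $H$. The compact corner case lies outside the dynamical framework and must be addressed by a separate direct argument on the relevant connected component of the real quadric $\{v : v^2 = \tau^2\}$.
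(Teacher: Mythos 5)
First, a point of reference: the paper does not prove Proposition~\ref{lem:anve} at all --- it is quoted from Anan'in--Verbitsky, and the authors explicitly sidestep the ergodic-theoretic route, proving their own (polarized) density statements (Proposition~\ref{prop:ergodic}, Theorem~\ref{thm:nl_density}) via Clozel--Ullmo's equidistribution of infinitely many strongly special Shimura subvarieties (Theorem~\ref{thm:clull}). Your proposal is therefore an attempt at the argument of \cite{AV} itself, and as written it has a genuine gap in the concluding step.

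Your reduction correctly identifies the set in question with the image of $\Gamma H K$ in $G/K$, so what must be shown is that $\Gamma HK=\bigcup_{k\in K}\Gamma H k$ is dense in $G$. The orbits actually available to you are the right translates $\pi(H)\cdot k$ of the orbit of the identity coset, \emph{not} the orbits $\pi(kH)$ of the points $[k]$: the inclusion ``$HK\cdot[e]\supseteq H\cdot[k]$'' is false, since $kH\not\subseteq \Gamma HK$ in general. This is not a mere bookkeeping slip, because each translate $\pi(H)\cdot k$ is closed and nowhere dense: $H=\mathrm{Stab}_G(\tau)$ is the real locus of a $\mathbb{Q}$-subgroup (essentially $SO(\tau^\perp)$), so $\Gamma\cap H$ is a lattice in $H$ by Borel--Harish-Chandra, and hence $\pi(H)\cong(\Gamma\cap H)\backslash H$ is a \emph{closed} finite-volume orbit by Raghunathan's criterion. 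In the language of your own dichotomy, the base point lies precisely in the exceptional set where $L_x=H$, so Moore/Ratner genericity gives you nothing about it. The density of $\bigcup_{k\in K}\pi(H)k$ --- an uncountable union of closed, nowhere dense sets --- can only come from the non-compactness of the isotropy group $K\cong SO(2)\times SO^+(m-2,r)$ of the period domain (which is not a Riemannian symmetric space exactly because $m-2\geq 1$ and $r\geq 1$), and a correct dynamical proof must run the Moore/Ratner machinery for the non-compact factor of $K$ acting on $\Gamma\backslash G$, interacting with the closed orbit $\pi(H)$; this is what \cite{AV} do. A useful sanity check: your argument nowhere uses that $K$ is non-compact, so it would ``prove'' the same statement for a lattice of signature $(2,r)$, where it is false --- there a single $\Gamma$-orbit of Noether--Lefschetz divisors is a closed, nowhere dense analytic subset, which is precisely why the paper's Proposition~\ref{prop:ergodic} must produce infinitely many distinct $\Gamma$-orbits of special divisors before invoking Clozel--Ullmo. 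The compact ``corner case'' you flag at the end is real, but it is secondary to this structural issue.
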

In our situation, one could work with this result and extend it to the polarized case as is done in a special case in \cite[Lemma 3.6]{matsu-isotrop}, however it is more convenient for us to adopt an algebraic approach using powerful results of Clozet and Ullmo \cite{clull}.
Instead of working with the period domain $\Omega_L$, let us work with a quotient of it by an arithmetic subgroup $\Gamma$ (which, in practice, will be the group of isometries of $\Lambda$ fixing $h$ or a finite index subgroup of it like the monodromy group). This variety is isomorphic to 
$$ _{\Gamma\backslash}SO(2,n)_{/SO(2)\times SO(n)}.
$$
The appropriate language to use the aforementioned results is that of Shimura varieties, which is rather separate from the subject of this paper, therefore we will keep it as simple as possible. For the interested reader, a good reference on the topic is \cite{milne2005}. A Shimura variety is obtained from a Shimura datum, which amounts to the following by \cite[Proposition 4.8]{milne2005}:
\begin{defn}
A Shimura datum $(G,D)$ consists of the following:
\begin{itemize}
\item A semisimple algebraic group $G$ defined over $\mathbb{Q}$ of non compact type,
\item A Hermitian symmetric domain $D$,
\item An action of $G(\mathbb{R})^+$ on $D$ defined by a surjective homomorphism $G(\mathbb{R})^+\rightarrow Hol(D)^+$ with compact kernel.
\end{itemize}
\end{defn}
In our setting the datum will be $(SO(2,n), SO(2,n)_{/(SO(2)\times SO(n))})$ with the obvious action of $G$ on $D$, so that the kernel is $SO(2)\times SO(n)$.
\begin{defn}
A connected Shimura variety is defined by the inverse system of quotients $\Gamma\backslash D$, where $\Gamma$ runs over arithmetic subgroups of $G^{ad}(\mathbb{Q})^{+}$ whose pre image in $G(\mathbb{Q})^+$ is a congruence subgroup
\end{defn}
So, for the purpose of density, a statement on the Shimura variety associated to $D$ then works, by continuity of the quotient maps giving the inverse system, on all quotients of $D$ by commensurable arithmetic subgroups.
We will be interested in special Shimura subvarieties, which are those usually called of Hodge type as they are related to variations of Hodge substructures, see \cite[Proposition 2.8]{Moon_linear1}.
\begin{defn}
Given a Shimura datum $(G,D)$, a datum $(H,D_H)$ defines a Shimura subvariety of Hodge type if
\begin{itemize}
\item $(H,D_H)$ is a Shimura datum and $H\subset G$
\item There is a closed immersion between the Shimura varieties associated to the above Shimura data.
\end{itemize}
Under the above hypothesis, all connected components of the closed immersion are of Hodge type.
Moreover, such a subvariety will be called strongly special if there is no intermediate parabolic subgroup between $H$ and $G$.
\end{defn}
Not all varieties of Hodge type have this form, but these suffice for our purposes by \cite[Remark 2.6]{Moon_linear1}. 
\begin{rmk}
{\em {In our situation, we want to consider the Shimura variety associated to the datum
$$(SO(2,n), SO(2,n)_{/(SO(2)\times SO(n))})
$$ and the divisors where a specific class $\lambda$ is kept algebraic, which are associated to the datum 
$$(Stab(\lambda), SO(2,n-1)_{/(SO(2)\times SO(n-1))}),$$ 
where the symmetric domain is given by the orthogonal to $\lambda$. 
Notice that the group $Stab(\lambda)$ is actually maximal parabolic in $SO(2,n)$, so that these divisors of Hodge type are actually strongly special.}}
\end{rmk}
The main result we want to use is the following:
\begin{thm}\cite[Theorem 4.6]{clull}\label{thm:clull}
Let $\mathcal{S}$ be a Shimura variety and let $\mathcal{S}_{n}$ be a sequence of strongly special Shimura subvarieties. Then there exists a subsequence $\mathcal{S}_{n_k}$ and a strongly special Shimura subvariety $\mathcal{M}$ which contains every $\mathcal{S}_{n_k}$ and coincides with their euclidean closure. 
\end{thm}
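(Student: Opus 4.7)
The plan is to translate the topological statement about Shimura subvarieties into a statement about weak-$\ast$ convergence of canonical probability measures on $\mathcal{S}$, and then appeal to the deep classification results of Ratner together with their geometric refinements due to Mozes and Shah.

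First, I would attach to each strongly special subvariety $\mathcal{S}_n$ its canonical invariant probability measure $\mu_n$, obtained by normalizing the Haar measure on the closed orbit $H_n(\mathbb{R})^+\cdot x_n$ that $\mathcal{S}_n$ defines inside the arithmetic quotient $\Gamma\backslash G(\mathbb{R})$, where $H_n\subset G$ is the semisimple subgroup coming from the Shimura subdatum. The collection $\{\mu_n\}$ sits in a metrizable space of probability measures on the (possibly non-compact) Shimura variety $\mathcal{S}$. A tightness argument, using reduction theory to control how $\mu_n$-mass can escape toward the cusps, yields by Prokhorov's theorem a weakly convergent subsequence $\mu_{n_k}\to \mu_\infty$ with $\mu_\infty$ a genuine probability measure.

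The heart of the matter is to identify $\mu_\infty$. The Mozes--Shah theorem, itself a consequence of Ratner's measure- and topological-rigidity theorems for unipotent flows, asserts that any weak-$\ast$ limit of algebraic probability measures associated to semisimple subgroups acting on $\Gamma\backslash G(\mathbb{R})$ is itself algebraic, supported on a closed orbit of some reductive subgroup $H_\infty\subset G$ which contains (a conjugate of) $H_{n_k}$ for all large $k$. The strongly special hypothesis --- equivalent to the absence of a proper $\mathbb{Q}$-parabolic subgroup containing $H_n$ --- is used both to prevent escape of mass to the cusps in the tightness step and to ensure that the limiting $H_\infty$ again defines a strongly special Shimura subdatum, whose associated subvariety is the desired $\mathcal{M}$.

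Finally, one identifies the euclidean closure of $\bigcup_k \mathcal{S}_{n_k}$ with $\mathrm{supp}(\mu_\infty)=\mathcal{M}$. The inclusion $\overline{\bigcup_k\mathcal{S}_{n_k}}\subset\mathcal{M}$ is immediate from weak-$\ast$ convergence: any point outside $\mathcal{M}$ has an open neighborhood of $\mu_\infty$-measure zero, hence of $\mu_{n_k}$-measure zero for $k$ large, which for homogeneous measures forces the neighborhood to miss $\mathcal{S}_{n_k}$ entirely. The reverse inclusion follows from the density of generic $H_\infty(\mathbb{R})^+$-orbits inside $\mathcal{M}$, combined with the fact that $\mathcal{M}$ already contains each $\mathcal{S}_{n_k}$. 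The main obstacle is the Mozes--Shah/Ratner input together with the non-escape-of-mass verification: these are the hard ergodic-theoretic ingredients that Clozel and Ullmo supply, and it is precisely the strongly special hypothesis --- rather than mere speciality --- that permits a clean conclusion with a single limit $\mathcal{M}$ of the same type as the $\mathcal{S}_n$.
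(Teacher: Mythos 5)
The paper offers no proof of this statement---it is quoted verbatim from Clozel--Ullmo \cite{clull}---and your outline faithfully reproduces the strategy actually used in that reference: canonical algebraic probability measures attached to the strongly special subvarieties, a non-divergence (tightness) argument exploiting the absence of intermediate $\mathbb{Q}$-parabolics to prevent escape of mass, and the Mozes--Shah theorem (built on Ratner rigidity) to identify the weak-$\ast$ limit as the canonical measure of a strongly special $\mathcal{M}$. One small caution: the containment $\mathcal{S}_{n_k}\subset\mathcal{M}$ should be taken from the Mozes--Shah conclusion itself, which asserts $\mathrm{supp}(\mu_{n_k})\subset\mathrm{supp}(\mu_\infty)$ for all large $k$; your final-paragraph argument via weak-$\ast$ convergence only yields that $\mu_{n_k}$ of a neighborhood disjoint from $\mathcal{M}$ tends to zero, not that it vanishes, and a small positive mass does not force $\mathcal{S}_{n_k}$ to miss that neighborhood.
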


 As a consequence of this we have the following. 

\begin{prop}\label{prop:ergodic}
Let $\Lambda'$ be a lattice of signature $(3,n),\ n\geq 1$ and let $h\in\Lambda'$ be a class of positive square. Let $L':=h^\perp$ and let $\lambda\in \Lambda'$ be a class which is not in the $O(\Lambda')$-orbit of $h$. 
Let $\Omega_{L'}:=Gr_{++}^{or}(2,L'\otimes\mathbb{R})$ be its period domain. Let $G\subset O(\Lambda')$ be a group of finite index. 
Then the set $\mathcal{D}_{G,\lambda}$ of periods orthogonal to an element in the orbit $G\lambda$ is dense in $\Omega_{L'}$.
\end{prop}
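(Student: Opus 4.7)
The plan is to translate the question into a statement about Shimura varieties and then apply the Clozel--Ullmo equidistribution result (Theorem~\ref{thm:clull}). Let $\Gamma\subset O(L')$ be the (finite-index) image of $G\cap\mathrm{Stab}_{O(\Lambda')}(h)$ under the restriction map to $L'$, and set $\mathcal{S}:=\Gamma\backslash\Omega_{L'}$; this is a Shimura variety for the datum $\bigl(SO(2,n),\,SO(2,n)/(SO(2)\times SO(n))\bigr)$, of complex dimension $n$. Because $\mathcal{D}_{G,\lambda}$ is $\Gamma$-stable and the quotient $\Omega_{L'}\to\mathcal{S}$ is open, it suffices to prove density of the image of $\mathcal{D}_{G,\lambda}$ in $\mathcal{S}$.

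For each $\mu\in G\lambda$, write $\mu=\alpha_\mu h+\mu^\flat$ with $\mu^\flat\in L'\otimes\mathbb{Q}$; the hypothesis $\lambda\notin O(\Lambda')\cdot h$ in particular gives $\lambda\notin\mathbb{Q}h$, so each $\mu^\flat$ is non-zero. A period $p\in\Omega_{L'}$ is orthogonal to $\mu$ exactly when $p\subset(\mu^\flat)^\perp\cap(L'\otimes\mathbb{R})$, so the locus in question is the period domain of the corank-$1$ sublattice $(\mu^\flat)^\perp\cap L'\subset L'$, and descends to a codimension-$1$ strongly special Shimura subvariety $\mathcal{S}_\mu\subset\mathcal{S}$ (see the remark preceding Theorem~\ref{thm:clull}). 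Since $G$ is of finite index in the arithmetic group $O(\Lambda')$, it is Zariski-dense in $O(\Lambda'_\mathbb{Q})$, whose standard representation on $\Lambda'\otimes\mathbb{Q}$ is irreducible; in particular the orbit $Gh$ spans $\Lambda'\otimes\mathbb{Q}$, and the non-zero form $(\lambda,\,\cdot\,)$ takes infinitely many values on it. Combined with the identities $\alpha_\mu=(\lambda,g^{-1}h)/h^2$ and $(\mu^\flat)^2=\lambda^2-\alpha_\mu^2 h^2$, this forces $\{(\mu^\flat)^2:\mu\in G\lambda\}$ to be infinite; since $\Gamma\subset O(L')$ preserves the quadratic form on $L'$, the primitive integer classes proportional to the various $\mu^\flat$ fall into infinitely many $\Gamma$-orbits, yielding an infinite sequence $\{\mathcal{S}_{\mu_n}\}_{n\geq 1}$ of pairwise distinct codimension-$1$ strongly special Shimura subvarieties of $\mathcal{S}$.

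Applying Theorem~\ref{thm:clull} to this sequence produces an infinite subsequence $\mathcal{S}_{\mu_{n_k}}$ whose common euclidean closure is a strongly special Shimura subvariety $\mathcal{M}\subseteq\mathcal{S}$. The main step, and essentially the only obstacle, is to show $\mathcal{M}=\mathcal{S}$. Because $\mathcal{M}\supseteq\mathcal{S}_{\mu_{n_k}}$ and each $\mathcal{S}_{\mu_{n_k}}$ has codimension $1$, $\mathcal{M}$ has codimension $0$ or $1$ in $\mathcal{S}$. If the codimension were $1$, then $\mathcal{M}$ would itself be a strongly special Shimura divisor, hence of the form $\mathcal{S}_\nu$ for some primitive $\nu\in L'$; lifting the inclusion $\mathcal{S}_{\mu_{n_k}}\subseteq\mathcal{S}_\nu$ to period domains and using that positive two-planes span any signature-$(2,m)$ real quadratic space with $m\geq 1$, one would deduce $(\mu^\flat_{n_k})^\perp\cap L'\subseteq\nu^\perp\cap L'$, hence $\nu\in\mathbb{Q}\mu^\flat_{n_k}$, so that $\mathcal{S}_{\mu_{n_k}}=\mathcal{S}_\nu$ in $\mathcal{S}$ for every $k$, contradicting pairwise distinctness. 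Therefore $\mathcal{M}=\mathcal{S}$, the union $\bigcup_k\mathcal{S}_{\mu_{n_k}}$ is dense in $\mathcal{S}$, and the reduction of the first paragraph yields density of $\mathcal{D}_{G,\lambda}$ in $\Omega_{L'}$.
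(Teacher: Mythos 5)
Your proof follows essentially the same route as the paper's: pass to the quotient by the stabilizer of $h$, project the $G$-orbit of $\lambda$ to obtain infinitely many pairwise distinct strongly special divisors (distinguished by the infinitely many values of $(\lambda, g^{-1}h)$, hence of $(\mu^\flat)^2$), and apply Clozel--Ullmo together with a codimension argument; your final step even fills in the paper's terse ``by dimension reasons''. One small slip: $\lambda\notin O(\Lambda')\cdot h$ does not imply $\lambda\notin\mathbb{Q}h$ (e.g.\ $\lambda=2h$), but in that degenerate case every period in $\Omega_{L'}=\Omega_{h^\perp}$ is already orthogonal to $\lambda$ and the statement is trivial, so nothing is lost.
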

\begin{proof}
Let $\Gamma$ be the subgroup of $G$ fixing $h$, then we have a continuos map 
$$\pi\,:\,\Omega_{L'} \rightarrow \mathcal{F}_{h,\Gamma}:= _{\Gamma\backslash}\Omega_{L'}.$$
Notice that $\mathcal{D}_{G,\lambda}$ is saturated in the fibres of this map, that is $\pi^{-1}(\pi(\mathcal{D}_{G,\lambda}))=\mathcal{D}_{G,\lambda}$. Therefore, it is enough to prove density in $\mathcal{F}_{h,\Gamma}$.
For every element of $\alpha\in G/\Gamma$, we can define a $\Gamma$-orbit of an element $\lambda_\alpha$, where $\lambda_\alpha$ is the projection of $g\lambda$ in $L'$ for some representative $g\in G$ of $\alpha$. 
Notice that $\lambda_\alpha$ depends on the choice of a representative $g$ of $\alpha$, but we will be considering all possible representatives. We can consider the associated divisors $\mathcal{D}_{\Gamma,\lambda_\alpha}$, 
which are (associated to) strongly special Shimura subvarieties as stated before. All points of them correspond to $h$-polarized Hodge structures such that the period is orthogonal to an element in the $G$ orbit of $\lambda$, 
hence their union is the locus we are considering.\\
Notice that $G/\Gamma$ is an infinite set as it is commensurable to the integral part of $O(3,n)/O(2,n)$. Moreover also the $\Gamma$ orbits of all $\lambda_\alpha$ are infinite, 
as the pairing $q(h,g\lambda)$ takes infinitely many values (which implies that the square $q(\lambda_\alpha)$ takes infinitely many values). Therefore we have infinitely many divisors of the form $\mathcal{D}_{\Gamma,\lambda_\alpha}$.
By Theorem \ref{thm:clull} the closure of the infinite union of all the divisors $\mathcal{D}_{\Gamma,\lambda_\alpha}$ can only be contained in a (strongly special) Shimura subvariety
By dimension reasons it has to coincide with the whole space, hence our claim.
\end{proof}

\begin{rmk}{\em{
Notice that, when $G=\Mon^2(\Lambda)$ and $\Gamma=\Mon^2(\Lambda,h)$,
 the space $\mathcal{F}_{h,\Gamma}$ coincides with the (connected) moduli space of polarized IHS manifolds considered in \cite{GriHulSan10}.}}
\end{rmk}

Proposition \ref{prop:ergodic} can then be applied to prove density of the Noether-Lefschetz loci, whose definition is recalled below.

\begin{defn}\label{def:nl}
Let $X$ be an IHS manifold, with $\phi :H^2(X,\mathbb Z)\cong\Lambda$ an isometry, and $H$  a primitive ample line bundle on $X$.
Let $L\subset \Lambda$ be a sub-lattice of signature $(2,m)$ with $m\geq 2$ such that $H\in L^\perp$.
 Let $\mathfrak M_\Lambda$ be the moduli space of marked polarized deformations of $(X,H)$.   Let $\mathfrak M^0_L\subset \mathfrak M_\Lambda$ be a connected component of the moduli space parametrizing marked polarized IHS deformations $(X_t,H_t,\phi_t)$ of $(X,H, \phi)$ 
 such that the period of $X_t$ lies in $L$. 
Let $N \subset \Lambda$  be a sub-lattice of signature $(a,b)$ with $a\leq 1$ and let us denote by $\mathfrak{D}_N$ the locus of $t\in\mathfrak{M}^0_L$ such that there exists a 
primitive embedding $N\subset \Pic({X}_t)$.  
 Then $\mathfrak{D}_N$ is called a Noether-Lefschetz locus.
\end{defn}

\begin{rmk}{\em{
Let $\Lambda$ be the $K3$ lattice. Consider $P:=\langle 2d\rangle$, for some integer $d\geq 2$ and $N:=\langle -2\rangle$. 
Then $\mathfrak D_N$ parametrizes projective $K3$ surfaces of degree $2d$ possessing a $-2$-curve (without requiring a given 
intersection between the $-2$-curve and the polarization). }}
\end{rmk}

\begin{rmk}{\em{The existence of a 
primitive embedding $N\subset \Pic({X}_t)$ is equivalent to the existence of a $g\in O(\Lambda)$ (or in a finite index subgroup $G\subset O(\Lambda)$)
such that $\phi_t^{-1}(g\cdot N)\subset \Pic (X_t)$.  
}}
\end{rmk}

\begin{rmk}{\em{
Notice that the manifolds appearing in $\mathfrak{D}_N$ have periods contained in the  sublocus of  the period domain of dimension at least
$$
 \mathrm{rank}(L)-\mathrm{rank}(N)-2.
$$
Therefore, by the surjectivity of the period map restricted to each connected component of the moduli space \cite{huy_basic}, 
the locus $\mathfrak{D}_N$ is clearly non empty if the above number is positive.}}
\end{rmk}

We are now ready to state and prove our main density result.
\begin{thm}\label{thm:nl_density}
Keep notation as in Definition \ref{def:nl}. Suppose that $N$ is of signature $(a,b)$, with $a\leq 1$ and  that we have $\mathrm{rank}(\Lambda)-\mathrm{rank}(N)\geq 3$. Then, if not empty,  the Noether-Lefschetz locus $\mathfrak{D}_N$ is dense in $\mathfrak{M}^0_L$. 
\end{thm}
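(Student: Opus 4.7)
The approach is to adapt the proof of Proposition \ref{prop:ergodic} from a single class to a sublattice, and to apply the Clozel--Ullmo theorem (Theorem \ref{thm:clull}) to an infinite family of strongly special Shimura subvarieties coming from the $O(\Lambda)$-orbit of a primitive embedding $N_0 \hookrightarrow \Lambda$ witnessing the non-emptiness of $\mathfrak{D}_N$.

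First I would reduce to a statement on the period domain. By the global Torelli theorem \ref{thm:corhuy} and the description recalled in Remark \ref{rmk:fam}, the connected component $\mathfrak{M}^0_L$ is, up to a covering, the quotient $\Gamma\backslash\Omega_L$, where $\Gamma := \Mon^2(\Lambda)\cap\mathrm{Stab}(H)$ is a finite-index subgroup of $O(\Lambda)$. The non-emptiness of $\mathfrak{D}_N$ supplies a primitive embedding $N_0 \hookrightarrow \Lambda$ whose image is orthogonal to at least one period in $\Omega_L$. The condition $N \subset \Pic(X_t)$ then translates into: the period of $X_t$ is orthogonal to $g\cdot N_0$ for some $g$ in a chosen finite-index subgroup $G\subset O(\Lambda)$ containing $\Gamma$. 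Hence the preimage of $\mathfrak{D}_N$ in $\Omega_L$ is the union
\[
\widetilde{\mathfrak{D}}_N \;=\; \bigcup_{g} \Omega_{L\cap (gN_0)^\perp},
\]
indexed by coset representatives $g \in G/(G\cap\mathrm{Stab}(N_0))$, and density of $\mathfrak{D}_N$ in $\mathfrak{M}^0_L$ is equivalent to density of $\widetilde{\mathfrak{D}}_N$ in $\Omega_L$.

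Next, I would interpret each term $\Omega_{L\cap (gN_0)^\perp}$ (modulo $\Gamma$) as a strongly special Shimura subvariety of $\Gamma\backslash\Omega_L$. Its Shimura datum is $(\mathrm{Stab}(gN_0),\,\Omega_{L\cap(gN_0)^\perp})$, and the stabilizer in $\mathrm{SO}(2,m)$ of a primitive non-degenerate sublattice is a product of orthogonal groups with no intermediate parabolic, exactly as in the remark preceding Theorem \ref{thm:clull} treating the single-class case. I would then show that the collection $\{gN_0\}$ decomposes into infinitely many $\Gamma$-orbits, arguing as in the proof of Proposition \ref{prop:ergodic}: the coset space $G/\Gamma$ is commensurable with the integer points of $\mathrm{O}(\Lambda_{\mathbb{R}})/\mathrm{Stab}(H)$, which is infinite, and the integer tuple $(q(H,\,g v_i))_{i=1,\dots,\rk N}$ on a basis $\{v_i\}$ of $N_0$ takes infinitely many values as $g$ varies, separating out infinitely many $\Gamma$-orbits. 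The hypothesis $\rk\Lambda - \rk N \geq 3$ guarantees that each $L\cap(gN_0)^\perp$ still has signature at least $(2,1)$, so every sub-period domain involved is non-empty and positive-dimensional.

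Finally, Theorem \ref{thm:clull} applied to this infinite family of strongly special Shimura subvarieties produces a subsequence whose Euclidean closure is a strongly special Shimura subvariety $\mathcal{M}$ of $\Gamma\backslash\Omega_L$. The hard part of the argument, and the main obstacle, is then to exclude the possibility that $\mathcal{M}$ is proper. Following the closing lines of the proof of Proposition \ref{prop:ergodic}, a proper $\mathcal{M}$ would correspond to a reductive subgroup $H'\subsetneq\mathrm{SO}(2,m)$, and containment of infinitely many sub-period domains $\Omega_{L\cap(g_kN_0)^\perp}$ in $\mathcal{M}$ would force the projections of the sublattices $g_kN_0$ to lie within a fixed finite-dimensional $H'$-stable subspace of $L\otimes\mathbb{R}$ — contradicting the infiniteness of the $\Gamma$-orbits of $\{g N_0\}$ established above. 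Hence $\mathcal{M}=\Gamma\backslash\Omega_L$, $\widetilde{\mathfrak{D}}_N$ is dense in $\Omega_L$, and the density of $\mathfrak{D}_N$ in $\mathfrak{M}^0_L$ follows.
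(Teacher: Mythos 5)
Your overall strategy differs structurally from the paper's. You apply Clozel--Ullmo once, directly to the infinite family of codimension-$\mathrm{rank}(N)$ special subvarieties cut out by the $G$-orbit of $N_0$. The paper instead diagonalizes $N\otimes\Q$ as $\langle l_1,\dots,l_{a+b}\rangle$ and runs an induction on the rank: at each step it only ever imposes one more class $l_{r+1}$, so that every application of Clozel--Ullmo (via Proposition \ref{prop:ergodic}) concerns a family of \emph{divisors} in the relevant sub-period domain. The point of that reduction is exactly to make the last step of the argument trivial: a strongly special subvariety containing infinitely many distinct divisors must, for dimension reasons, be the whole space.

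This is where your proposal has a genuine gap, and you flag it yourself. Once the special subvarieties have codimension $\geq 2$, the limiting strongly special subvariety $\mathcal{M}$ produced by Theorem \ref{thm:clull} could a priori be proper of intermediate dimension, and "dimension reasons" no longer apply. Your sketch --- that a proper reductive $H'$ would force the $g_kN_0$ into a fixed $H'$-stable subspace, contradicting the infinitude of $\Gamma$-orbits --- is not a proof as stated: infinitely many distinct sublattices can perfectly well all contain, or all be contained in, a fixed proper subspace, so infinitude alone yields no contradiction. To close this you would need to invoke the classification of strongly special subvarieties of orthogonal Shimura varieties (so that $\mathcal{M}=\Omega_{M^\perp}$ for some sublattice $M$ with $M\subset g_kN_0\otimes\Q$ for all $k$) and then argue that the $G$-orbit of a nonzero $M$ cannot stay inside $N_0\otimes\Q$ because $G$ has finite index in $O(\Lambda)$ and acts irreducibly on $\Lambda\otimes\Q$. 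None of this is supplied, and the related assertion that the higher-codimension loci are strongly special is also only justified in the paper for the divisor case. The paper's induction is designed precisely to avoid having to prove any of this; I would either adopt that reduction or supply the missing classification argument in full.
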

As noticed in the Introduction, the condition on the signature of $N$ is a necessary one. 

\begin{proof}[Proof of  Theorem \ref{thm:nl_density}]
 
Let $N\otimes\mathbb{Q}=\langle l_1,\dots,l_{a+b} \rangle$, where $l_i\perp l_j$ for $i\neq j$, $l_i\in \Lambda$. 
Moreover, we can suppose that all $l_i$ apart for $l_{1}$ have negative square. By the condition on the rank of $N$ and its signature, 
notice that $N^\perp\subset \Lambda$ has signature at least $(2,1)$. Let $G$ be any group of  finite index inside $O(\Lambda)$. 
We will prove the result by induction on $a+b$. For $a+b=1$, this is precisely the content of Proposition \ref{prop:ergodic}. 
Let us consider the lattices $M_{r}=\langle l_1,\dots,l_r\rangle$ and let $G_r$ be the stabilizer in $G$ of $M_r$. For every class $[\alpha]$ of $G/G_r$ and every representative $\alpha\in [\alpha]$, 
we have a different projection $R_{r,\alpha}$ of $\alpha\cdot M_r$ inside $L$. Notice again (as in the proof of Proposition \ref{prop:ergodic}) that the $R_{r,\alpha}$'s depend on the choice 
of the representative but we will consider all possible choices. 
Let $\Lambda_{i,\alpha}:=R_{r,\alpha}^\perp\subset L$. 
By the inductive step, periods in the union
$$\cup_{\alpha \in G}\Lambda_{r,\alpha}$$ 
are dense in $\Omega_L$, therefore it suffices to prove that, for any $\alpha$, 
periods in $\Lambda_{r,\alpha}$ orthogonal to an element in the $G/G_{r+1}$ orbit of $M_{r+1}$ are dense. What we are considering is the union of loci of the form
$$D_{r+1,\beta}:=\{P\in \Omega_{\Lambda_{r,\alpha}},\,P\perp g M_{r+1}\,\text{for some }g\in G\,\text{such that }[g]=\beta\}.$$
Here, $\beta\in G/G_{r+1}$. These loci are either divisors in $\Omega_{\Lambda_{r,\alpha}}$ or they are empty if $R_{r+1,\beta}$ is not negative definite. 
As $\langle l_{r+1}\rangle=M_r^\perp\subset M_{r+1}$ is negative definite, this locus is empty if and only if $\Omega_{\Lambda_{r,\alpha}}$ was already empty.
Therefore the density statement we want is precisely the content of Proposition \ref{prop:ergodic} with $\Lambda=M_{r}^\perp$, $L=\Lambda_{i,\alpha}$, $\lambda=l_{r+1}$ and group $G/G_r$, which has finite index in $O(M_{r}^\perp)$ and we are done. 

\end{proof}

\begin{rmk}\label{rmk:puremon2}
{\em {As made clear in the proof the statement of the theorem holds for $G=O(\Lambda)$, but the analogous density statement holds for any finite index subgroup of it, 
like the group of monodromy operators (which has finite index by \cite[Theorem 1.16]{Verbitsky}, see also \cite[Remark 6.7]{HuyBourb}).}}
\end{rmk}

The above theorem has some nice consequences also in the $K3$ case, as an example it can be used to prove that Kummer $K3$ surfaces obtained from an abelian surface of polarisation $(1,d)$ are dense in the moduli space of degree $2e$ polarized $K3$ surfaces, for any $d$ and $e$. Notice the following simpler case.\\

\begin{ex}{\em
Let $\mathcal{F}_4$ be the moduli space of degree $4$ $K3$ surfaces. We denote its elements by $(S,H)$, where $H$ is a nef divisor. By the above theorem, the subset of polarized $K3$ surfaces $(S',L')$ with a nef  class $H'$ of square $4$, and an additional $-2$ curve $E$ such that $H'\cdot E=0$ are dense in $\mathcal{F}_4$. However, notice that the polarization $H$ might not be a combination of $H'$ and $E$, as the proof of \ref{prop:ergodic} uses rational periods. Here, $N=4\oplus -2$, $P=4$ and $L=U^2\oplus E_{8}(-1)^2\oplus -4$. }
\end{ex}

For all the known deformation types of holomorphic symplectic varieties we have several interesting dense Noether-Lefschetz subloci. 
We start with Hilbert scheme of points on $K3$ surfaces and generalized Kummer varieties:

\begin{proof}[Proof of Corollary \ref{thm:density}]
Let $(X,H)$ be a polarized manifold of $K3^{[n]}$-type. Let $N:=\langle -2(n-1) \rangle$ be a rank one lattice. 
Then, by Theorem \ref{thm:nl_density} the Noether-Lefschetz locus $\mathfrak{D}_N$ of $N$ is dense in the deformation of $(X,H)$ (of course 
this follows also immediately from \cite{clull}). We now claim that all points in $\mathfrak{D}_N$
correspond to IHS varieties birational to Hilbert schemes of points.
We can choose a specific embedding of $N$ into $\Lambda:=H^2(X,\mathbb{Z})$ such that $N^\perp$ is unimodular. With such a choice, 
elements in the Noether-Lefschetz locus have the same Hodge structure of an Hilbert scheme of points on a $K3$ surface $S$, where $S$ is the only $K3$ with the Hodge structure of $N^\perp$. 
Thus, if instead of the $O(\Lambda)$ orbit of $N$ we take the $\Mon^2(\Lambda)$ orbit of it, by Remark \ref{rmk:puremon2} and the version of Global Torelli given in Theorem \ref{thm:corhuy}
we get our claim. 

The same proof works, mutatis mutandis, also for generalized Kummer varieties. 
\end{proof}
The above result, in the non polarized case, is the content of \cite{MaMe}.
\begin{rmk}{\em {
In the proof of the non polarized case by Markman and Mehrotra \cite{MaMe}, the authors prove additionally that the locus of actual Hilbert schemes (and not manifolds birational to them) is dense in moduli. Their proof is based on the fact that, for general non projective Hilbert schemes, there are no different birational models of it. In the projective case this is false in general, e.g. take the Hilbert square of a degree two K3 and the Mukai flop on the plane it contains.}}
\end{rmk}
The following result can be proven with the use of Theorem \ref{thm:nl_density} in a way analogous to Corollary \ref{thm:density}, unless the moduli spaces considered are zero dimensional. We also present a different proof.
\begin{cor}\label{ex:moduli}
Let  $\Lambda$ be a lattice either isomorphic to the second cohomology group of a Hilbert scheme of $n$ points on a $K3$ or to a that of generalized Kummer. 
Let $L\subset \Lambda$ a sub-lattice of signature $(1,m)$ with $m\geq 2$  and let $\mathfrak{M}_L$ be the moduli space of manifolds deformation equivalent to the Hilbert scheme of $n$ points on a $K3$ 
or a generalized Kummer which contain primitively $L^\perp$ inside their Picard lattice. Suppose $\mathfrak{M}_L$ is not empty. 
Then the locus of $\mathfrak{M}_L$ corresponding to moduli spaces of sheaves (or their Albanese fibre) on a $K3$ surface (respectively abelian) is dense.
\end{cor}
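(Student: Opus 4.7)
The plan is to apply Theorem \ref{thm:nl_density} to an enlargement of $L^\perp$ producing a primitive isotropic class of divisibility one in the Picard lattice of a dense sublocus, and then to invoke the standard characterization of moduli spaces of sheaves (respectively their Albanese fibres) via such Mukai-isotropic classes.

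I first choose a primitive sublattice $N\subset\Lambda$ with $L^\perp\subset N$, of signature $(1,\mathrm{rk}(N)-1)$, containing an additional primitive isotropic class $v$ whose divisibility in $\Lambda$ equals one in the $K3^{[n]}$ case (respectively the correct divisibility for the Kummer case). Nikulin-style primitive embedding arguments, applied to the indefinite lattice $\Lambda$ of rank at least seven, ensure that such an $N$ exists with $\mathrm{rk}(N)=\mathrm{rk}(L^\perp)+1$, and the hypothesis $b_2-\mathrm{rk}(N)\geq 3$ of Theorem \ref{thm:nl_density} is satisfied thanks to the rank assumption $m\geq 2$ on $L$.

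Next, I verify $\mathfrak D_N\neq\emptyset$ by exhibiting a concrete element: on a projective K3 surface $S$ (respectively an abelian surface $A$) with Picard lattice of the appropriate rank, choose a Mukai vector $w$ with $w^2=2n-2$ (respectively $w^2=2n+2$) such that the algebraic part of the extended Mukai lattice matches $N$ through the Mukai isomorphism, and a generic polarization $H$. Then the moduli space $M_H(w)$ (respectively the Albanese fibre of it) is an IHS manifold of the claimed deformation type, with Picard lattice containing $N$ primitively. Theorem \ref{thm:nl_density} then yields density of $\mathfrak D_N$ in $\mathfrak M_L$.

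Finally, each $X_t\in\mathfrak D_N$ carries a primitive isotropic class of divisibility one in $\Pic(X_t)$; combined with Theorem \ref{thm:corhuy} and the interpretation of the extended Mukai lattice, this identifies $X_t$ up to birational equivalence with a moduli space of stable sheaves on a projective K3 surface (respectively the Albanese fibre of such a moduli space on an abelian surface), which is the content of Addington's criterion and its Kummer analogue. Density of $\mathfrak D_N$ then yields density of the locus of moduli spaces of sheaves in $\mathfrak M_L$. The main obstacle is arranging the primitive embedding of $N$ so that the divisibility of $v$ in $\Lambda$ comes out equal to one: a higher divisibility would only produce twisted moduli spaces rather than the untwisted ones claimed. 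The alternative proof alluded to in the statement presumably handles the degenerate case where $\mathfrak M_L$ is zero-dimensional and the density argument trivializes.
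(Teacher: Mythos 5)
Your overall strategy (apply Theorem \ref{thm:nl_density} to an enlargement $N$ of $L^\perp$ and then identify the points of $\mathfrak{D}_N$ as moduli spaces of sheaves) is viable, but the identification step has a genuine gap. Addington's criterion \cite[Proposition 4]{A} (and its Kummer analogue \cite[Proposition 2.3]{MW}) does not ask for a primitive isotropic class whose divisibility in $\Lambda=H^2(X,\Z)$ is one: it asks for a copy of the hyperbolic plane $U$ primitively embedded in the \emph{algebraic} Mukai lattice, i.e.\ for an isotropic class $w$ together with an \emph{algebraic} class pairing to $1$ with it. Divisibility one in $\Lambda$ only produces a dual vector in $H^2(X,\Z)$, which at a general point of $\mathfrak{D}_N$ is transcendental; the relevant divisibility is the one computed in $\Pic(X_t)$ (equivalently, in the saturation of $\Pic(X_t)\oplus\Z v$ inside the Mukai lattice). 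So, as written, your $N$ does not verify the hypothesis of the criterion you invoke. To repair this you would have to build the whole rank-two hyperbolic sublattice into $N$; since both $U$ and $L^\perp$ carry a positive direction while $N$ must have signature $(1,\mathrm{rank}(N)-1)$ and rank $\mathrm{rank}(L^\perp)+1$, the two must be glued along their positive directions, a nontrivial embedding problem that ``Nikulin-style arguments'' do not settle uniformly in $L$. (You also leave implicit the passage from the unpolarized moduli space $\mathfrak{M}_L$ of the statement to the polarized setting in which Theorem \ref{thm:nl_density} is formulated; the paper does this via the density of projective points \cite{Fuj,Ca83} before fixing a polarization class $h$.)

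For comparison: the paper's main proof avoids isotropic classes altogether. After reducing to projective points and fixing $h$, it applies Theorem \ref{thm:nl_density} to $L'=\langle L^\perp,h\rangle^\perp$ with $N=\langle -2(n-1)\rangle$, the exceptional class, exactly as in the proof of Corollary \ref{thm:density}; the dense locus so obtained consists of manifolds birational to Hilbert schemes of points, which are in particular moduli spaces of sheaves, and the identification uses only the unimodularity of $N^\perp$ and Theorem \ref{thm:corhuy}. The paper's alternative proof is the one closest to yours in spirit: it does go through the $U\subset\Pic$ criterion, but it obtains the hyperbolic plane from the density of maximal Picard rank points together with Nikulin's splitting criterion \cite[Corollary 1.13.5]{nik}, comparing the length of the discriminant group with the rank of the Picard lattice, which sidesteps the delicate choice of $N$ entirely.
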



\begin{proof}
By \cite{Fuj} (see also \cite{Ca83}) points $\mathfrak{M}^{proj}_L$ corresponding to projective IHS are dense in $\mathfrak{M}_L$. Now let $\mathfrak{M}^0_L\subset \mathfrak{M}^{proj}_L$ be a irreducible component corresponding
to marked projective IHS $(X,\phi, h_X)$ such that $\phi (h_X)=h$, for a given positive class $h\in \Lambda$. 
Apply Theorem \ref{thm:nl_density} to  $L':= \langle L^\perp, h\rangle^\perp$, and $N:=\langle -2(n-1)\rangle$ (as in Corollary \ref{thm:density}). This proves the density of Hilbert schemes and a fortiori 
that of moduli spaces of sheaves on $K3$ surfaces. As usual the same proof works for generalized Kummer varieties.
\end{proof}

\begin{proof}[Alternative proof of Corollary \ref{ex:moduli}]
Let us consider the period domain $\Omega_{L}$, where $L:=P^\perp$ inside the appropriate lattice $\Lambda$ (either $U^3\oplus E_8(-1)^2\oplus (-2n+2)$ or $U^3\oplus (-2n-2)$). 
We have a natural (surjective) period map from $\mathfrak{M}_L$ to $\Omega_L$. By a classical results \cite[Proposition 17.20]{Vbook}  
the locus corresponding to manifolds with maximal Picard rank is dense in $\Omega_L$ and so is its preimage in $\mathfrak{M}_L$. 
Let $Y$ be any of these maximal Picard rank manifolds. Regardless of the deformation class, the Picard lattice $N:=\Pic(Y)$ has rank at least $5$ and the discriminant group $N^\vee/N$
 is a finite group with length at most three by elementary lattice theory (its complement in $\Lambda$ has a discriminant group with at most two generators). 
 Therefore, by \cite[Corollary 1.13.5]{nik}, we have that $N=U\oplus N'$ for some $N'$. By \cite[Proposition 4]{A} and \cite[Proposition 2.3]{MW}, 
 this actually implies that these manifolds are moduli spaces of sheaves (or Albanese fibres of them) on a surface. 
 Indeed, the condition in the above cited result is that a specific lattice containing $\Pic(Y)$ contains a copy of $U$, and clearly this is our case.
\end{proof}

\begin{ex}
{\em
Let $X$ be a manifold deformation equivalent to O'Grady's ten dimensional manifold. Let $H\subset \Pic(X)$ be a primitive positive class. Then the locus of manifolds birational to a moduli space of sheaves on a $K3$ surface is dense in the deformations of $(X,H)$. This holds because such resolutions have an extra algebraic class given by the exceptional divisors, hence Proposition \ref{prop:ergodic} applies to the parallel transport of this class on $X$. The equivalent statement holds for the six dimensional O'Grady's manifold. Here, $N=\langle -6 \rangle$ (for a specific choice of an embedding in $\Lambda=U^3\oplus E_8(-1)^2\oplus A_2(-1)$.
}
\end{ex}

Corollary \ref{ex:moduli} is interesting in its own, as many of the applications of density results so far only use it (or at least, can work with it). In the following paragraphs we present two such examples which
seem particularly important to us. 

\subsection{Mori cones}\label{ss:mori-cones}
The goal of the section is to provide an alternative proof of the main result of \cite{BHT}, namely the description of the Mori 
cone of any projective deformation of a $K3^{[n]}$. Notice that 
the strategy we follow here was already presented in \cite{BHTv1}. This strategy could not work due to the lack of the suitable density result. We find it interesting to describe it again, 
as it can analogously lead to the description of the Mori cone of the projective deformations of the O'Grady examples as soon as the Mori cone is known for these 
(see \cite{MZ} for important progress in this direction).  We take the occasion to notice that an analogous statement holds for deformations of generalized Kummers. 
  
The precise result is the following. 

\begin{thm}[\cite{BHT}, Theorem 1, for the $K3^{[n]}$-type]\label{thm:BHT}
Let $(X,h_X)$ be a polarized IHS of $K3^{[n]}$-type (respectively of generalized Kummer type). 
The Mori cone of $X$ has the same description of the Mori cone of a $K3^{[n]}$ (resp. of a generalized Kummer), namely
the Mori cone of $X$in $H^2(X,\mathbb R)_{alg}$ is generated by classes in the positive cone and the image under $\theta^\vee$ of the following
$$
 \{ a\in \tilde\Lambda_{alg}:a^2\geq -2\ ({\textrm respectively}\ a^2\geq 0), |(a,v)|\leq v^2/2, (h_X,a)>0 \}.
$$
\end{thm}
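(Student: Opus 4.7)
The plan is to reduce to the case of moduli spaces of sheaves on a projective $K3$ (resp. abelian) surface, where the Mori cone description is the content of \cite{BHT}, and then to propagate it to every polarized deformation via Corollary \ref{ex:moduli}. I would first apply Corollary \ref{ex:moduli} to the moduli space of marked polarized deformations of $(X,h_X)$, obtaining a sequence $(X_k,h_{X_k})\to(X,h_X)$ in the euclidean topology of $\mathfrak M^+_{h_X^\perp}$, with each $X_k$ birational to a moduli space of sheaves on a projective $K3$ (resp. to an Albanese fibre over an abelian surface). Parallel transport canonically identifies the Mukai lattice $\tilde\Lambda_{alg}$, the Mukai vector $v$, the map $\theta^\vee$, and the class $h_X$ along the family, so the candidate set
$$
 \{a\in\tilde\Lambda_{alg}\,:\,a^2\geq -2\ (\text{resp. }a^2\geq 0),\ |(a,v)|\leq v^2/2,\ (h_X,a)>0\}
$$
is canonically the same on $X$ and on each $X_k$, and by \cite{BHT} it generates the Mori cone of every $X_k$.

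Next I would prove the two inclusions separately on $X$. The positive cone part of the Mori cone is classical and holds on any projective IHS manifold, so the issue is reduced to producing, for each class $a$ in the set above, a rational curve on $X$ of class $\theta^\vee(a)$. I would further specialize by applying Theorem \ref{thm:nl_density} with $N=\langle h_X,\theta(a)\rangle$, so that $\theta^\vee(a)$ stays algebraic along the approximation; on each $X_k$ the result of \cite{BHT} then produces a rational curve $C_k\subset X_k$ of class $\theta^\vee(a)$, and a Ran-type deformation argument for uniruled divisors of expected dimension $2n-1$ on an IHS $2n$-fold yields a limit rational curve $C\subset X$ of the correct class. The reverse inclusion I would obtain dually from the parallel description of the nef cone, viewed as the chamber of the positive cone cut out by the hyperplanes $\theta(a)^\perp$ for $a$ in the set above: no additional extremal algebraic wall can appear on $X$, since such a wall would be defined by an integral algebraic class in $\tilde\Lambda_{alg}$ which, by density, would already be visible on some $X_k$, contradicting the BHT description there.

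The hard part will be the deformation-theoretic step, namely producing the limit rational curve $C\subset X$ out of the family $\{C_k\}$. The difficulty is that in a family of IHS manifolds the Mori cone is not in general lower semi-continuous: curves can become reducible, non-reduced, or acquire components that are contracted in the limit. What rescues the argument is precisely that Corollary \ref{ex:moduli}, combined with Theorem \ref{thm:nl_density}, allows us to specialize inside the Noether-Lefschetz sublocus of $\mathfrak M^+_{h_X^\perp}$ on which the class $\theta^\vee(a)$ remains algebraic and primitive, so that the relevant component of the relative Hilbert scheme of rational curves is proper over the base and its limit is effective of the correct numerical class. Ruling out degenerations to non-rational or wholly contracted limits is the technical core of the proof; this is the same line of attack originally sketched in \cite{BHTv1}, which could only be carried through once the density of moduli spaces of sheaves afforded by Corollary \ref{ex:moduli} became available.
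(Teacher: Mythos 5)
Your overall strategy---use the density of moduli spaces of sheaves to transfer the Bayer--Macr\`i/Yoshioka description to an arbitrary polarized deformation---is the same as the paper's, but you have located the difficulty in the wrong half of the statement, and the half you treat cursorily contains a genuine gap. The paper points out that the inclusion ``the listed classes are effective/extremal'' does \emph{not} require density at all (it is \cite[p. 948]{BHT}), so your elaborate limit-of-rational-curves construction is solving the easy direction. The direction that genuinely needs density is the one you dispatch in one sentence: showing that no \emph{extra} extremal ray of negative square can occur on $X$. Your argument there---``such a wall would be defined by an integral algebraic class which, by density, would already be visible on some $X_k$, contradicting the BHT description''---does not work as stated. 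If $R$ is an extremal rational curve of negative square on $X$ whose class is not in the prescribed set, then after deforming inside the Noether--Lefschetz locus of $P=\langle h_X, R^\vee\rangle$ the class $[R]$ is indeed still \emph{algebraic} on nearby moduli spaces of sheaves, but that alone yields no contradiction: the Bayer--Macr\`i theorem constrains which algebraic classes are \emph{extremal} (or even effective) there, not which ones are algebraic. What you need, and what you are missing, is the statement that extremality itself propagates: this is Proposition \ref{prop:def} (\cite[Proposition 5]{BHTv1}), asserting that an extremal rational curve of negative square remains extremal at all but finitely many points of a connected curve $B$ in the base along which its class stays algebraic. With that lemma the paper's proof is short: take a general curve $B$ through $(X,h_X)$ in $\mathfrak M^0_L$ with $L=P^\perp$, note that its generic point is a moduli space of sheaves by Theorem \ref{thm:nl_density}, and conclude by \cite[Theorem 12.2]{BM} (resp. \cite[Proposition 3.36]{Yoshi}).

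A secondary issue: even granting your framework, your ``properness of the relative Hilbert scheme'' and ``Ran-type deformation'' language in the first direction does not address the real obstruction you yourself name (limits becoming reducible or contracted); controlling such degenerations is exactly why one argues via extremality (which is an open-type condition along $B$ by Proposition \ref{prop:def}) rather than via limits of cycles. I would rewrite the proof around that proposition.
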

We refer the reader to \cite{BHT} for the relevant definition. Here we want only to stress the r\^ole of density in the proof. 
We recall the following important deformation theoretic result. 

\begin{prop}[\cite{BHTv1}, Proposition 5]\label{prop:def}
 Let $X$ be a projective IHS. Let $R\subset X$ be an extremal rational curve of negative square. 
 Consider a projective family $\pi: \mathcal X\to B$ over a connected curve $B$ with $\pi^{-1}(b)=X$, for a certain $b\in B$, such that the class $[R]$ remains algebraic in the fibers of $\pi$. 
 The the specialization of $R$ in $\pi^{-1}(b_0)$ remains extremal for all  but finitely many $b_0\in B$.
\end{prop}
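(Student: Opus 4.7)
The plan is to show that the extremal birational contraction associated to $R$ deforms in the family, and that its locus of breakdown in $B$ is a proper closed (hence finite) subset of the curve.

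Since $R$ is extremal with $R^2 < 0$, by the minimal model theory for IHS varieties (Druel, Matsushita, Huybrechts) there exists a birational contraction $\phi: X \dra \bar X$ contracting $R$ (possibly a small contraction on a birational model of $X$). This contraction is encoded by the face of the nef cone orthogonal to $[R]$, and is determined by a nef class $H \in H^2(X,\mathbb R)_{alg}$ with $H \cdot R = 0$ lying on this face, together with the BB-dual divisor class $D_R$, which has $D_R^2 = R^2 < 0$.

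The next step is to relativize the picture over $B$. Since $[R]$ remains algebraic throughout the family, parallel transport produces classes $D_{R_b}$ and $H_b$ in $H^2(X_b,\mathbb Q)_{alg}$ for every $b \in B$. By the deformation theory of extremal contractions for projective IHS families in which the contracted class is algebraic, $\phi$ extends to a relative birational contraction $\Phi: \mathcal X|_{B^0} \dra \bar{\mathcal X}$ over some open subset $B^0 \subset B$. For every $b \in B^0$ the specialization $R_b$ is contracted by $\Phi|_{X_b}$ and is thus extremal in $X_b$. Since $B$ is a curve, if $B^0$ is Zariski open then $B \setminus B^0$ is finite, and we are done.

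The hard part is therefore proving the algebraicity, rather than mere cocountability, of the good locus $B^0$. One must rule out that the face through $[R_b]$ in the nef cone is destabilised on a (countable) dense subset of $B$. The argument goes via a boundedness statement: any class $\alpha \in \Pic(X_b)$ that could create a new wall of the nef cone through $[R_b]$ must satisfy $\alpha \cdot R = 0$ and have BB-square in a fixed bounded range (a constraint coming from the IHS lattice-theoretic description of nef walls, as in Markman's work on monodromy-reflective classes and its generalizations to prime exceptional divisors). Only finitely many such integral classes $\alpha$ are compatible with these constraints; each of them cuts out a Noether-Lefschetz divisor $\mathfrak D_\alpha$ in the polarized moduli space. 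Since $R$ is already extremal on the generic fibre $X$, no such divisor can contain the image of $B$ under the period map, so each $\mathfrak D_\alpha$ meets $B$ in finitely many points. Taking the union over the finitely many competitor classes produces the desired finite bad locus, outside of which $R_{b_0}$ is forced to remain extremal.
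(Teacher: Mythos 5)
First, a remark on the comparison itself: the paper offers no proof of Proposition \ref{prop:def} — it is quoted from \cite{BHTv1}, Proposition 5, and used as a black box in the proof of Theorem \ref{thm:BHT} — so there is no internal argument to measure yours against. Judged on its own, your proposal has a genuine gap at the step you yourself identify as the hard one. You claim that any class $\alpha$ capable of destroying the extremality of $R_{b_0}$ must satisfy $\alpha\cdot R=0$ and have Beauville--Bogomolov square in a bounded range, and that only finitely many integral classes meet these constraints. Neither half holds up. Extremality of $[R_{b_0}]$ fails when there is an effective decomposition $[R_{b_0}]=\alpha+\beta$ with $\alpha,\beta$ non-proportional to $[R_{b_0}]$; such $\alpha$ need not be orthogonal to $R$, and identifying the destabilizing classes with ``walls of the nef cone through $[R_b]$'' conflates two different phenomena. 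More seriously, even granting both constraints, the lattice $R^{\perp}\subset H^2(X,\mathbb Z)$ is indefinite (signature $(3,b_2-4)$), so it contains infinitely many integral classes of any fixed negative square; without a bound on the degree $h\cdot\alpha$ with respect to a relative polarization there is no finiteness, and a countably infinite union of Noether--Lefschetz divisors can perfectly well meet the curve $B$ in an infinite set. The degree bound $0<h\cdot\alpha\le h\cdot R$ is precisely the missing ingredient, and once you have it the contraction is unnecessary: irreducible curves occurring in an effective decomposition of $[R_{b_0}]$ have $h$-degree at most $h\cdot R$, hence are parametrized by a relative Chow variety (or space of stable maps) of finite type and proper over $B$; only finitely many flat classes and decomposition types occur, the locus in $B$ where a given type is realized is Zariski closed, and it cannot be all of $B$ since any decomposition would specialize to one on the fibre over $b$, where $R$ is extremal by hypothesis. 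This is the standard route and, to the best of our understanding, the one taken in \cite{BHTv1}.

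Two further problems with the first half of your argument. The existence of the contraction $\phi$ is not supplied by the cone and contraction theorems, because $K_X\equiv 0$ and the ray is not $K_X$-negative; for IHS varieties this existence is itself a difficult theorem and in general only produces a contraction on a birational model, which muddies the relation between extremality on $X$ and on that model. And the assertion that $\phi$ ``extends to a relative birational contraction over some open subset $B^0$'' is essentially the statement to be proved: nefness (let alone semiampleness) of the supporting class $H_b$ is neither an open nor a closed condition in $b$, so declaring $B^0$ open begs the question. Finally, invoking the lattice-theoretic description of nef walls to bound the competitor classes is circular in the context in which the paper uses Proposition \ref{prop:def}, namely as a step toward establishing that very description of the Mori cone.
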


\begin{proof}[Proof of Theorem \ref{thm:BHT}]
The fact that the above classes are actually in the Mori cone does not depend on density. See \cite[p. 948]{BHT} for its proof. 

For the other inclusion, consider  the rank 2 sublattice $P\subset H^2((K3)^{[n]},\mathbb Z)$ generated by $h$ and by the dual class $R^\vee$ (or rather its saturation). 
As in Definition \ref{def:nl} let $L:=P^\perp$ and  inside the connected component $\mathfrak M^0_L$ containing $(X,h_X)$ consider a general connected curve $B$ passing through $(X,h_X)$.
 
By Proposition \ref{prop:def} an extremal rational curve with negative square remains extremal on the generic point of $B$.  
The generic point of $B$ corresponds to a moduli space of sheaves on a projective K3 surface, as by Theorem \ref{thm:nl_density} moduli spaces of sheaves on projective K3 surfaces are dense in $\mathfrak M^0_L$, hence in $B$.  
By \cite[Theorem 12.2]{BM}, the statement holds for moduli spaces of sheaves on projective K3 surfaces and the desired inclusion follows.  
The proof works {\it verbatim} if $X$ is a projective deformation of a generalized Kummer by replacing \cite[Theorem 12.2]{BM} with \cite[Proposition 3.36]{Yoshi}.
\end{proof}
\subsection{Lagrangian fibrations}\label{sec:lagr}
%

It is conjectured that a non-trivial integral and primitive movable 
(resp. nef) line bundle $L$ on a $2n$-dimensional IHS manifold $X$ with $q_X(L)=0$ induces a rational (resp. regular) Lagrangian fibration. Precisely we should have that 
$L$ is base-point-free, $h^0(X,L)=n+1$ and the morphism
$$
 X\to \mathbb P H^0(X,L)^\vee
$$
is surjective, with connected Lagrangian fibers. 
Several  important results have been obtained in recent years on this problem. 
Matsushita \cite{matsu-top, matsu-add} first proved that the image $B$ of a morphism $f$ from such an $X$ must be a $\mathbb Q$-factorial, klt n-dimensional 
Fano variety of Picard number $1$ and $f$ is a Lagrangian fibration, as soon as $B$ is normal and $0<\dim(B)<2n$. 
The fact that $B$ must be the projective space was proved later by Hwang, under the stronger assumption that $B$ is smooth. 
Bayer-Macr\`i \cite[Theorem 1.5]{BM} (resp. Yoshioka \cite[Proposition 3.36]{Yoshi}) proved the conjecture for moduli spaces of Gieseker stable sheaves on a projective $K3$ (respectively abelian) surface. Independently Markman \cite[Theorem 1.3]{Mark-lagr} proved the conjecture for a general  deformation $X$  of a 
$(K3)^{[n]}$. Markman's result can be extended to any deformation of a 
$(K3)^{[n]}$ thanks to a result due to Matsushita \cite{matsu-def} insuring that if an irreducible holomorphic
symplectic manifold $X$ admits a Lagrangian fibration, then $X$ can be deformed preserving the Lagrangian fibration.
Later Matsushita \cite[Corollary 1.1]{matsu-isotrop} proved that if $X$ is a deformation of $(K3)^{[n]}$ or of a generalized Kummer, any  
non-trivial integral and primitive line bundle $L$ with $q_X(L)=0$ such that $c_1(L)$ belongs to the birational K\"ahler cone of $X$, 
induces a {\it rational} Lagrangian fibration over the projective space. 
His proof uses, among other things three main ingredients: Lagrangian fibrations deform well in moduli \cite{matsu-def};
the conjecture holds on moduli spaces (by \cite[Theorem 1.5]{BM} and \cite[Proposition 3.36]{Yoshi});
 moduli spaces are dense in the Hodge locus of $[c_1(L)]$. The latter is now proved  in \cite[Lemma 3.6]{matsu-isotrop} and can as well be obtained 
 as a particular case of our Corollary \ref{ex:moduli}. Notice that the non-emptyness follows from Markman \cite {Mark-lagr} for deformations of $K3^{[n]}$ and Wieneck \cite{Wie}
for deformations of generalized Kummers.




%
\section{Equivalent conjectures}\label{sec:congetture}
%
Let $X$ be a $2n$-dimensional IHS projective variety. The Chow group $CH_0(X)$ of 0-cycles is non representable by Mumford's theorem (cf. \cite[Chapitre 22]{Vbook}). Nevertheless, by the Bloch-Beilinson conjecture, the $CH_0(X)$ should have an inner structure under the form of a decreasing filtration $F_{BB}^\bullet:=F_{BB}^\bullet CH_0(X)$ satisfying some axioms (see \cite[Chapitre 23]{Vbook}). While this conjecture appears to be out of reach,   Beauville, inspired by the multiplicative splitting on the Chow ring of abelian varieties \cite{Babe} and by the case of $K3$ surfaces \cite{BV},   suggested in \cite{B07} to investigate an interesting consequence of a (conjectural) splitting of this filtration, called ``weak splitting property''. This property consists in the injectivity of the cycle-class map when restricted to the sub-algebra generated by classes of divisors. 
This conjecture of Beauville gave rise to several works in the last years \cite{V08, Vrat, Fer11, Fu, Rie, Lat, FLV, SYZ, SY, Yin}. Very recently cf. \cite{V15}, Voisin developed a different approach to the study of the filtration $F_{BB}^\bullet$ and its conjectural splitting. 
For any integer $1\leq i\leq n$, she considers 
$$
S_i(X):\{x\in X: \dim O_x\geq i\},
$$
where $O_x$ is the orbit of $x$ under rational equivalence. Notice that a subvariety $Y$ of such an orbit is a {\it constant cycle} subvariety of $X$ (cf. \cite{Huy14}), i.e. a subvariety whose points are all rationally equivalent in $X$. 
Using Mumford's theorem, one can show that any of the (possibly countably many) irreducible components of $S_i(X)$ has dimension $\leq 2n-i$. Then Voisin defines
$S_i CH_0(X)\subset CH_0(X)$ to be the subgroup generated by classes of points in  $S_i(X)$. In this way she obtains a descending filtration $S_\bullet CH_0(X)$ on $CH_0(X)$ and she conjectures that it should be opposite to the Bloch-Beilinson filtration and thus provides a splitting of it, in the sense that, for any $i=1,\ldots, n$
$$
 S_i CH_0(X) \cong CH_0(X)/ F^{2n-2i+1}_{BB}CH_0(X).
$$
In this direction an important r\^ole is played by the following
\begin{conj}[\cite{V15}, Conjecture 0.4]\label{conj:voisin}
Let $X$ be a $2n$-dimensional holomorphic symplectic variety. For any $i=1,\ldots,n$ there exists a component $Z$ of $S_i(X)$ of maximal dimension $2n-i$. 
\end{conj}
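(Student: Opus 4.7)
The plan is to combine the polarized density result Theorem \ref{thm:nl_density-intro} with a deformation-theoretic stability result for algebraically coisotropic subvarieties, reducing the conjecture to the case of Hilbert schemes of points on projective $K3$ surfaces (and, analogously, to generalized Kummer varieties), where the required components can be produced explicitly.

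First I would fix a polarized IHS $(X,H)$ of $K3^{[n]}$-type and work in the connected component $\mathfrak M^0_\Lambda$ of the moduli space containing it. By Corollary \ref{thm:density}, the sublocus of marked pairs birational to $(S^{[n]}, h_S)$ for some projective $K3$ surface $S$ is dense in $\mathfrak M^+_{h^\perp}$. The target is to produce, for each $i=1,\ldots,n$ and for a Zariski-dense family of such $S$, an irreducible algebraically coisotropic subvariety $Z_i^{(S)}\subset S^{[n]}$ of codimension $i$ whose general fiber for the characteristic (null) foliation is rationally connected, and which therefore is a constant-cycle subvariety by Beauville--Voisin for $K3$ surfaces. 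Such a $Z_i^{(S)}$ is then a component of $S_i(S^{[n]})$ of the maximal dimension $2n-i$ predicted by the conjecture.

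Second, on the Hilbert-scheme side, the construction of $Z_i^{(S)}$ proceeds either via incidence subvarieties (the closure of the locus of length-$n$ schemes containing a fixed reduced length-$i$ subscheme $\xi \in S^{(i)}$, which is coisotropic of codimension $i$, and whose fibers are themselves Hilbert schemes of smaller length that inherit the constant-cycle property from $\xi$) or, when $S$ admits an elliptic fibration $S\to\mathbb P^1$ (a further dense condition by Theorem \ref{thm:nl_density} applied to an appropriate rank-two sublattice), via suitable pullbacks of sections of the induced Lagrangian fibration $S^{[n]}\to\mathbb P^n$. In either construction the cohomology class $[Z_i^{(S)}]$ is computable in the tautological generators of $H^\ast(S^{[n]},\mathbb Z)$ and therefore lies in a fixed sublattice $N\subset\Lambda$ of signature $(a,b)$ with $a\leq 1$.

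Third, I would apply Theorem \ref{thm:nl_density} to that sublattice $N$ (enlarged by the polarization) to obtain a dense Noether--Lefschetz locus in $\mathfrak M^0_\Lambda$ on which the class $[Z_i^{(S)}]$ remains Hodge. An extension of the Ran--Voisin stability theorem for Lagrangian subvarieties to algebraically coisotropic ones would then allow the family $(S^{[n]}_t,Z_{i,t}^{(S)})$ to deform over this Noether--Lefschetz locus and specialize, by properness of the relevant Douady/Hilbert functor, to a subvariety $Z_{i,X}\subset X$ of the expected codimension. A cohomological criterion for the rational connectedness of the characteristic fibers, controlled by the vanishing of $\sigma_X^k|_{Z_{i,X}}$ (a condition on the class $[Z_i^{(S)}]$ that is preserved in the Noether--Lefschetz locus), would then ensure that $Z_{i,X}$ remains a constant-cycle coisotropic component.

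The main obstacle is the deformation-theoretic step. Algebraic coisotropy is a cohomological/infinitesimal property and behaves well inside Hodge loci, but the constant-cycle condition is Chow-theoretic and is not obviously stable under specialization: the general fiber of the characteristic foliation could in principle become non-rationally-connected in the limit, or the subvariety could break into components of smaller dimension. The fix would require either a cohomological reformulation of constant-cycleness (via Voisin's criterion linking rational connectedness to the vanishing of restricted holomorphic forms) robust enough to pass to limits, or a direct argument that the density provided by Theorem \ref{thm:nl_density} is strong enough that for \emph{every} point of $\mathfrak M^0_\Lambda$ one can find an analytic family of Hilbert schemes passing through it along which $Z_i^{(S)}$ extends. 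Making this last step precise is the heart of the matter, and is where, absent further input, the argument would reduce the full conjecture to the deformation stability of a single Chow-theoretic invariant.
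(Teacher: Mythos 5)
The statement you are trying to prove is Conjecture \ref{conj:voisin}, i.e.\ Voisin's Conjecture 0.4 from \cite{V15}: it is an \emph{open conjecture}, and the paper contains no proof of it. What the paper does is (a) record the list of known cases (Lagrangian fibrations, generalized Kummers, Fano varieties of lines of cubic fourfolds, moduli of stable objects on a $K3$, and the codimension-one case for $K3^{[n]}$- and Kummer-type deformations via \cite{CP} and \cite{MP}), and (b) formalize exactly the reduction you sketch: Theorem \ref{thm:densityprinciple} shows that existence on a euclidean-dense subset of $\mathfrak M^+_{h^\perp}$ propagates to the whole moduli space, and Theorem \ref{thm:equiv} uses Theorem \ref{thm:nl_density} to show that a strengthened form of the conjecture (Conjecture \ref{conj:voisin2}) is equivalent to its restriction to a dense subset of any Noether--Lefschetz locus. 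So your first and third steps reproduce the paper's program, but they do not constitute a proof of the conjecture.

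The genuine gap is precisely the one you flag in your closing paragraph, and it is not a fixable technicality: the constant-cycle condition is Chow-theoretic, and there is no known mechanism to carry it through the specialization, nor is there an ``extension of the stability theorem'' to algebraically coisotropic subvarieties that would do the job (the paper cites \cite{LP}, generalizing \cite{Vstab}, and states explicitly that this is not sufficient to conclude existence for a general $X$, while the singular case is very difficult to control). The paper's workaround is to \emph{strengthen} the conjecture: it requires the orbits to be rationally chain connected by curves of a \emph{fixed} class $m(H^\vee)$, so that the relevant parameter space is the moduli space of genus-zero stable maps of a fixed class --- a finite-type, proper object --- and limits of RCC subvarieties are again RCC. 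This is what makes the density principle close up under specialization, and even so the paper only carries out the construction in codimension one (Theorem \ref{onethmtounirulethemall}, via nodal curves and linear series on $K3$ and abelian surfaces), explicitly leaving codimension $\geq 2$ as an open and challenging problem. Two smaller points: a maximal-dimensional component $Z$ of $S_i(X)$ is not itself a constant-cycle subvariety but is swept by the $i$-dimensional constant-cycle orbits of its points, so your appeal to Beauville--Voisin must be applied to the orbits rather than to $Z$; and your incidence-subvariety construction on $S^{[n]}$ requires the fixed length-$i$ subscheme to be supported on constant-cycle points (e.g.\ on rational curves of the surface), which is not automatic and is exactly where the explicit geometry enters in the known cases.
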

She then observed that if Conjecture \ref{conj:voisin} holds (and if of course the Bloch-Beilinson filtration exists), then the map
$$
 S_i CH_0(X) \to CH_0(X)/ F^{2n-2i+1}_{BB}CH_0(X)
$$
is surjective. 
Back to Conjecture \ref{conj:voisin}, Voisin observed that if $Z\subset S_i(X)$ has maximal dimension $2n-i$ then $Z$ is swept by $i$-dimensional constant cycle subvarieties, which are the orbits $O_z$ of its points $z\in Z$.
Conjecture \ref{conj:voisin} has been proved in the following cases: for $i=2$ and $X$ a very general double EPW sextic, \cite{Fer11}; for $i=n$ and $X$ having a Lagrangian fibration (\cite{Lin}); for a generalized Kummer and any $i$ (\cite{Lin2}); for the Fano variety of line on a cubic 4fold and the LLSV 8fold and any possible $i$, \cite{V15}; for moduli spaces of stable objects on a $K3$ surface, \cite{SYZ}; for $i=1$ when $X$ is deformation equivalent to the punctual Hilbert scheme of a $K3$ surface (respectively when  $X$ is deformation equivalent to a generalized Kummer) in \cite{CP} (resp.  in \cite{MP}).

To state our results, let us first define the following:
\begin{defn}
Let $X$ be an IHS projective variety, let $H$ be a divisor of positive square on $X$ and let $Z\subset X$ be a subvariety of pure codimension $i$. 
\begin{itemize}
\item[(i)] $Z$ is called a Voisin coisotropic subvariety if $Z\subset S_i(X)$.
\item[(ii)] A Voisin coisotropic subvariety $Z\subset X$ is said to have RCC orbits if the orbits $O_z$ of its points with respect to rational equivalence are rationally chain connected. 
\item[(iii)] If a Voisin coisotropic subvariety $Z\subset X$ has RCC orbits, these are called  
of type $m(H^\vee)$, for a certain integer $m>0$, if, for a general point $z\in Z$, any two points of $O_z$ are connected by a chain of rational curves of class $m(H^\vee)$.
\end{itemize}
\end{defn}
Here, the curve class $H^\vee$ is the class $H/div(H)$ under the embedding $H_2(X,\mathbb{Z})\rightarrow H^2(X,\mathbb{Q})$ given by lattice duality, and the divisibility $div(H)$ is the positive generator of $H\cdot H^2(X,\mathbb{Z})$.

The aim of this section is to relate  (a strengthening of) Conjecture \ref{conj:voisin}  to an existence conjecture on Noether-Lefschetz loci. 

The main tools will be Theorem \ref{thm:nl_density} and an easy, yet useful density principle
which we state and prove below. This principle simply says that to prove the existence of Voisin's coisotropic subvariety with (good) 
RCC orbits of a given type it is sufficient to have existence on a dense subset of the relevant moduli spaces.

%

To make things precise we introduce some terminology and notations.
Given a polarized IHS variety $(X,h)$ we will consider the moduli space 
of genus zero stable maps
$\overline{ M}_0(X, [h]^\vee)$ of class $[h]^\vee\in H_2(X,\mathbb Z)$. If $M$ is an irreducible component of 
$\overline{ M}_0(X, [h]^\vee)$ we will denote by $C\to M$ the universal curve above it and consider the natural evaluation morphism $ev:C\to X$.

 \begin{thm}\label{thm:densityprinciple}
Let $1\leq k\leq n$ be an integer.
Suppose there exists a subset $\mathfrak D\subset \mathfrak M^+_{h^{\perp}}$ which is dense with respect to the euclidean topology and
such that for all $t\in \mathfrak D$: 
\begin{enumerate}
\item[(i)] there exists an irreducible component $M_t$ of the moduli space of genus zero stable maps
$\overline{ M}_0(X_t, [h_t]^\vee)$ of dimension $2n-2$ and 
\item[(ii)] the image of evaluation morphism $ev_t:C_t\to X_t$ has dimension $2n-k$. 
\end{enumerate}
Then  any $X$ in $\mathfrak M^+_{h^{\perp}}$ contains a Voisin coisotropic subvariety $Z$ of codimension
$k$ with RCC orbits of type (a multiple of) $[h_X]^\vee$. 
\end{thm}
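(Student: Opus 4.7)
The plan is a specialization argument on the relative moduli of genus-zero stable maps, combined with upper semicontinuity of fiber dimensions and the density hypothesis on $\mathfrak D$. Fix $t_0 \in \mathfrak M^+_{h^\perp}$ and work over a connected Euclidean neighborhood $U$ of $t_0$ in an analytic cover, on which we have a universal family $\pi : \mathcal X \to U$ of polarized IHS varieties. Form the relative Kontsevich moduli space $\pi_{\overline{\mathcal M}} : \overline{\mathcal M}_0(\mathcal X/U, [h]^\vee) \to U$, which is proper and of finite type (so only finitely many irreducible components), together with its universal curve $\mathcal C \to \overline{\mathcal M}$ and relative evaluation $\mathrm{ev}: \mathcal C \to \mathcal X$.

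For each component $\mathcal N \subset \overline{\mathcal M}_0(\mathcal X/U,[h]^\vee)$, the locus where the fibers of $\mathcal N \to U$ have dimension $\geq 2n-2$ is closed and constructible. By hypothesis the union of these loci over all $\mathcal N$ contains the Euclidean-dense set $U \cap \mathfrak D$; finiteness of the components then yields at least one $\mathcal N$ whose $(\geq 2n-2)$-fiber locus is Euclidean-dense, hence Zariski-dense (being constructible), hence the whole $U$. Combining with the Ran--Kawamata lower bound on the dimension of components of rational curves on IHS manifolds and with the hypothesis of dimensional equality on $\mathfrak D$, the fibers of $\mathcal N \to U$ have dimension exactly $2n-2$ throughout $U$; in particular $\mathcal N_{t_0}$ is a component of $\overline{M}_0(X_{t_0},[h_{t_0}]^\vee)$ of dimension $2n-2$.

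Let $Z := \mathrm{ev}_{t_0}(\mathcal C_{t_0}) \subset X_{t_0}$ be the image on the central fiber. Two semicontinuity inputs pin down $\dim Z = 2n-k$: properness of the total evaluation $\mathrm{ev} : \mathcal C \to \mathcal X$ forces every limit of points of $\mathrm{ev}_{t_i}(\mathcal C_{t_i})$ with $t_i \to t_0$, $t_i \in \mathfrak D$, to lie in $Z$, so upper semicontinuity of the fibers of $\mathrm{ev}(\mathcal C) \to U$ gives $\dim Z \geq 2n-k$; conversely, upper semicontinuity of fiber dimensions of $\mathrm{ev} : \mathcal C \to \mathcal X$ yields $\dim \mathrm{ev}^{-1}_{t_0}(z) \geq k-1$ for every $z \in Z$, whence $\dim Z \leq (2n-1)-(k-1) = 2n-k$. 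Through a general $z \in Z$ therefore passes a $(k-1)$-dimensional family of rational curves of class $[h_{X_{t_0}}]^\vee$, whose union is a rationally chain connected subvariety of dimension $k$ contained in the rational equivalence orbit $O_z$; hence $\dim O_z \geq k$, so $Z$ is a codimension-$k$ Voisin coisotropic subvariety of $X_{t_0}$ with RCC orbits of type a positive multiple of $[h_{X_{t_0}}]^\vee$.

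The crux is the simultaneous control of three dimensions at the specialisation: the dimension of the chosen component of stable maps must not jump (ruled out by density plus finiteness of components plus the unobstructedness lower bound), the dimension of the image of the evaluation at $t_0$ must not drop (ensured by properness of stable maps together with upper semicontinuity of $\mathrm{ev}(\mathcal C) \to U$), and it must not grow (ensured by upper semicontinuity of the fibers of $\mathrm{ev} : \mathcal C \to \mathcal X$). I expect the first of these to be the hardest to pin down cleanly, since it is the only step that truly uses the density hypothesis; the other two are then clean semicontinuity statements applied to a single well-chosen component.
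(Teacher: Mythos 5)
Your overall strategy (relative Kontsevich space over a local deformation space, choice of a dominating component, semicontinuity of fibre dimensions) is the same as the paper's, but the shortcut you take at the end --- concluding directly at an arbitrary $t_0$ by semicontinuity alone --- has two genuine gaps. First, the component $\mathcal N$ you select is characterized only by the condition that its fibre dimension is $\geq 2n-2$ over a dense set; nothing guarantees that $\mathcal N_t$ contains the components $M_t$ furnished by the hypothesis, so the generic evaluation image of its universal curve is not known to have dimension $2n-k$, and both of your inequalities for $\dim Z$ rest on that. The correct selection (the one the paper makes) is by the condition $M_t\subset \mathcal N_t$; and even then, pigeonholing the dense set $\mathfrak D$ over the finitely many components only produces an $\mathcal N$ for which this holds on a subset dense in some open set, not in all of $U$. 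Second, and more seriously, your claim that the fibres of $\mathcal N\to U$ have dimension exactly $2n-2$ \emph{throughout} $U$ is unjustified: upper semicontinuity excludes upward jumps only outside a closed analytic subset, and the Ran--Kawamata type lower bound $\geq 2n-2$ says nothing against the fibre dimension jumping up at special points (e.g.\ over loci where $X_t$ acquires extra rational curves in the class $[h_t]^\vee$). Your upper bound $\dim Z\leq (2n-1)-(k-1)$ uses $\dim \mathcal C_{t_0}=2n-1$, so it can fail precisely at the special points $t_0$ one is trying to reach.

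The paper avoids both problems by a two-step argument: it first proves the statement over a Euclidean-open subset $U$ of the local deformation space --- chosen so that the fibres of the selected component are irreducible (or uniformly reducible, via constructibility of the number of components), their dimension is constant, and the rank of the evaluation is maximal; density of $\mathfrak D$ then forces the constants to be $2n-2$ and $2n-k$ there --- and it reaches the remaining points $X_0$ by degenerating the subvarieties themselves: $Z_0$ is defined as the limit of the $Z_t$, and the $k$-dimensional RCC members $F_t$ of the orbits specialize to subvarieties $F_0$ that are again RCC, since a limit of RCC subvarieties is RCC. This last step requires no control on the moduli space of stable maps at the special point, which is exactly what your argument is missing; to repair your proof you would need to add this degeneration step (or a substitute for it) to handle the jump locus.
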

\begin{defn}
The RCC orbits ruled by rational curves verifying items (i) and (ii) of Theorem \ref{thm:densityprinciple} will be called good.
\end{defn}
\begin{rmk}\label{rmk:good}
\rm{Using e.g. \cite[Proposition 2.1]{OSY} one can check that the RCC orbits of Voisin's coisotropic subvarieties of type $H$, where $H$ is a primitive divisor, are good. }
\end{rmk}

\begin{ex}
{\em Let $(S,H)$ be a very general K3 surface of degree $4$ and let $C\in |2H|$ be a general curve with only 5 nodes as singularities (that is, a curve of geometric genus 4). Let $R$ be the rational curve inside $S^{[5]}$ given by a $\mathfrak{g}^1_5$ on the normalization of $C$. Its class is $2H^\vee-8\tau_5$ and, as we let the curve $C$ and the linear series on it vary, we obtain exactly a 8 dimensional family of such curves, which can be proven analogously to \cite[Prop. 3.6]{KLM} using the fact that, under suitable generality assumptions, the curve normalization of $C$ is a Brill-Noether general curve by \cite[Cor. 8.5]{CFGK}. Therefore, the locus $Z$ they cover is a Voisin's coisotropic subvariety of type $2(H^\vee-4\tau_5)$ with good RCC orbits. 
}
\end{ex}

\begin{proof}[Proof of Theorem \ref{thm:densityprinciple}] 
In order to prove the theorem it is sufficient to prove that 
given any holomorphic symplectic variety  $X$ in $\mathfrak M^+_{h^{\perp}}$  the conclusion holds on (a contractible open subset of) the subset $B:=\Def(X)_{h^\perp}$ of the Kuranishi space $\Def(X)$ of deformations of $X$ parametrizing those where the class of $h$ remains algebraic. We will first show it on an open subset $U\subset B$ and then derive the conclusion on the whole $B$.

Let $\pi :\mathcal X\to B$ be the universal family.  Consider the relative moduli space of genus zero stable maps 
$\overline {\mathcal M}_0(\mathcal X/B, [h]^\vee)$. By abuse of notation we denote by $[h]^\vee$ the class of the section of the local system $R^{4n-2}\pi_*\mathbb Z$ whose value at the point $b\in B$ is the class in $H_2(X_t,\mathbb Z)$ dual to $\phi_t^{-1}(h)$. By hypothesis $\overline {\mathcal M}_0(\mathcal X/B, [h]^\vee)$ has dense image in the base $B$. Since it is a scheme of finite type, there exists an irreducible component $\mathcal M$ dominating the base and such that the restrictions $\mathcal M_{|b}$ over the points $t\in \mathfrak D$ contain the components $M_t$ given by the hypothesis of the theorem. Denote by $\mathcal C \to \mathcal M$ the universal curve and by $ev:\mathcal C \to \mathcal X$ the evaluation morphism over $B$. 

Consider the set
$$
B_{bad}:=\{ b\in B: \mathcal M_{|b}\ {\rm{is\  reducible}}\}.
$$
By \cite[Th\'eor\`eme 9.7.7]{FGA Explained} we have either 

(a) $B\setminus B_{bad}$ contains an open subset ;

or 

(b) $B_{bad}$ contains an open subset.

In case (a), let $U'\subset (B\setminus B_{bad})$ be the open subset. 
By definition for all $b\in U'$ we have that $\mathcal M_{|b}$ is irreducible. 

Inside $U'$ consider the open sublocus $U$ of points $b$ 
where the rank of the evaluation morphism $ev_b$ restricted to $\mathcal C_{|b}$ is maximal {\it and} the dimension of $\mathcal M_{|b}$ is constant. 
By density $U\cap \mathfrak D\not= \emptyset$. Therefore we have that 
$$
 \dim(\mathcal M_{|b})=\dim(M_{|b})=2n-2
$$ 
and 
$$
 \rk (ev_b)=2n-k.
$$
Set $Z_b:=ev_b(\mathcal C_{|b})$. 
By dimension count, for all $b\in U$, through the general point of  $Z_b$ we have
a $k$-dimensional RCC subvariety (contained in $Z_b$) and the theorem is proved over $U$
in this case. 

In case (b), one proceeds {\it mutatis mutandis} in a similar way. 


In both cases by construction any two points of $Z_b$ can be joined by a rational curve of the same class. 

To conclude the proof of the theorem, let $X_0$ in $\Def(X)_{h^\perp}\setminus U$.
Let $T\subset \Def(X)_{h^\perp}$ be a  curve passing through $X_0$ and not contained in $\Def(X)_{h^\perp}\setminus U$. Up to shrinking $T$ we may suppose that $(T\setminus [X_0])\subset U$. Define $Z_0\subset X_0$ to be the limit, for $t\in (T\setminus [X_0])$,
 of the subvarieties $Z_t\subset X_t$ having dimension $2n-k$ and covered by $k$-dimensional RCC subvarieties, whose existence has been shown before. Let $x_0\in Z_0$ be a point and $\{x_t\in X_t\}_{t\in  (T\setminus [X_0])}$ a set of  points converging to it. 
 Let $F_t\subset X_t$ be a $k$-dimensional RCC subvariety containing $x_t$ and
 let $F_0$ be the limit of the $F_t$'s. It is RCC, as limit of RCC's. Therefore also $X_0$ contains a $(2n-k)$-dimensional subvariety $Z_0\subset X_0$ which is covered by $k$-dimensional RCC subvarieties and the theorem is proved. 

\end{proof}


We state the following conjectures, the first one being a slight strengthening of Voisin's original conjecture.

\begin{conj}\label{conj:voisin2}
Let $X$ be a $2n$-dimensional IHS projective variety. For any $i=1,\ldots,n$ there exists a  primitive positive divisor $H_i$, a positive integer $m_i>0$ and a codimension $i$ Voisin's coisotropic subvariety $Z_i\subset X$ with good RCC orbits of type $m_i(H_i)^\vee$.
\end{conj}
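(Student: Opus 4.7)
My plan is to attack Conjecture~\ref{conj:voisin2} one deformation class and one codimension $i \in \{1,\ldots,n\}$ at a time, by combining the density principle (Theorem~\ref{thm:densityprinciple}) with the density of explicit models from Corollary~\ref{thm:density}. For $X$ a deformation of $(K3)^{[n]}$ with a primitive positive divisor $H_i$, Theorem~\ref{thm:densityprinciple} reduces the task to producing, on the dense sub-locus $\mathfrak D \subset \mathfrak M^+_{H_i^\perp}$ of Hilbert schemes of K3 surfaces, a codimension-$i$ subvariety covered by $i$-dimensional RCC orbits ruled by rational curves of class a fixed multiple of $[H_i]^\vee$. The same strategy, mutatis mutandis, should apply to deformations of generalized Kummers via the Kummer analogue of Corollary~\ref{thm:density}, and to the O'Grady examples via Corollary~\ref{ex:moduli}.

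\textbf{Construction on $S^{[n]}$.} For a polarized K3 $(S,L)$ and integers $m \geq 1$, $\delta \geq 0$ with $\delta = p_a(mL) - (n-i)$, consider the Severi variety $V := V_{|mL|,\delta}$ of integral $\delta$-nodal curves in $|mL|$. Granting the expected non-emptiness and regularity (available for wide ranges of $(m,\delta)$ through work of X.\ Chen, Dedieu--Sernesi and others), $V$ is smooth of dimension $n-i$ and a general $C \in V$ has normalization $\tilde C$ of geometric genus $n-i$. Set
$$
 Z_i \;:=\; \bigcup_{C \in V} \{\xi \in S^{[n]} \,:\, \supp(\xi) \subset C\},
$$
so that $\dim Z_i = (n-i) + n = 2n - i$. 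A general $\xi \in Z_i$ corresponds, via the normalization, to a degree-$n$ divisor on $\tilde C$, and two such points of $Z_i$ are rationally equivalent in $S^{[n]}$ whenever the corresponding divisors are linearly equivalent on $\tilde C$; hence $O_\xi$ contains an $i$-dimensional subvariety which is rationally chain connected by the $\mathbb P^1$'s arising from pencils $\mathfrak{g}^1_n$ on $\tilde C$. A direct parameter count gives $\dim V + \dim W^1_n(\tilde C) = (n-i) + (n+i-2) = 2n-2$, matching the expected dimension of the relevant component of $\overline M_0(S^{[n]},[R])$; the class $[R] \in H_2(S^{[n]},\mathbb Z)$ is a positive multiple of $mL^\vee$ corrected by the exceptional class with a coefficient depending only on $\delta$, $n$ (compare the example after Remark~\ref{rmk:good}). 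I would take $H_i$ to be the primitive divisor dual to $[R]$.

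\textbf{Main obstacle.} The hardest step is to establish, uniformly in $i$, both the non-emptiness and smoothness of the Severi variety $V_{|mL|,\delta}$ and the existence on the general normalization of a pencil $\mathfrak{g}^1_n$; although much is known, a statement covering every codimension $1 \leq i \leq n$ does not seem to be in the literature. A second technical point is to verify in full the hypotheses (i) and (ii) of Theorem~\ref{thm:densityprinciple}, namely that the $\mathbb P^1$'s above fill out an irreducible component of $\overline M_0(S^{[n]},[R])$ of dimension exactly $2n-2$ whose evaluation map has image of dimension $2n-i$. For $i=1$ this is essentially the content of \cite{CP}, and for $i=n$ it follows from the theory of Lagrangian fibrations on moduli of sheaves on $S$ (Subsection~\ref{sec:lagr}); the intermediate values $2 \leq i \leq n-1$ constitute the genuinely open part of the conjecture, and to close the argument I expect one may need to enlarge the dense subset beyond Hilbert schemes, using moduli of stable objects in the spirit of \cite{SYZ} to gain additional flexibility on the rational curves.
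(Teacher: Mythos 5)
You have set out to prove a statement that the paper itself leaves open: \ref{conj:voisin2} is stated as a \emph{conjecture}, and the paper nowhere claims a proof of it. What the paper actually establishes is (a) the equivalence of Conjecture \ref{conj:voisin2} with Conjectures \ref{conj:voisinNL}, \ref{conj:voisinNLforte} and \ref{conj:voisinfortissima} (Theorem \ref{thm:equiv}), via exactly the reduction you describe (the density principle, Theorem \ref{thm:densityprinciple}, combined with the density of Noether--Lefschetz loci, Theorem \ref{thm:nl_density}), and (b) the unconditional existence of rational curves dual to any primitive ample class on deformations of $K3^{[n]}$ and of generalized Kummers (Theorem \ref{onethmtounirulethemall}), which is the one piece of the program that can currently be carried out with nodal curves and Brill--Noether theory on their normalizations (Propositions \ref{prop:seriesbound}--\ref{prop:sisondensi}). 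Your strategy is therefore faithful to the paper's philosophy, but it is not a proof, and you acknowledge as much in your last paragraph.

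The gap is exactly where you locate it, and it is not a technicality. First, for $2\leq i\leq n-1$ your construction requires, for \emph{every} connected component of \emph{every} polarized moduli space $\mathcal{M}_{2d,t}$, the existence of nodal curves with the prescribed number of nodes whose normalizations are Brill--Noether general and carry a $\mathfrak{g}^1_n$, together with the monodromy-orbit matching (the analogue of Proposition \ref{prop:sisondensi}) guaranteeing that the dual divisor of the resulting curve class realizes the given polarization type; no such statement covering all intermediate $i$ exists, and Proposition \ref{prop:seriesbound} controls only the pencil case. Second, even granting the construction, hypotheses (i) and (ii) of Theorem \ref{thm:densityprinciple} demand that your curves fill an irreducible component of $\overline{M}_0(S^{[n]},[R])$ of dimension exactly $2n-2$ with evaluation image of dimension exactly $2n-i$; the paper verifies this only in the rational-curve setting (cf.\ Remark \ref{rmk:good} and \cite[Proposition 2.1]{OSY}), and for $i\geq 2$ excess or extraneous components of the space of stable maps are not excluded by your parameter count. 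Third, the case $i=n$ does not follow from the theory of Lagrangian fibrations as you assert: the general fibre of a Lagrangian fibration is an abelian variety, hence not rationally chain connected, and the known constant-cycle Lagrangians are not known to have orbits chain-connected by curves of a fixed class $m H^\vee$ with $H$ of positive square, which is what ``good RCC orbits of type $m(H^\vee)$'' requires. In sum, your argument reproduces the reduction already contained in Theorems \ref{thm:densityprinciple} and \ref{thm:equiv}; the conjecture itself remains open.
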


The advantage of coisotropic subvarieties with RCC orbits is that we control easily their degenerations, while fixing the type allows to deal with a parameter space (of stable genus zero
maps) which will be of finite type. 

\begin{rmk}\label{rmk:oberdieck}
\rm{The recent preprint \cite{OSY} shows that in certain cases the integers $m_i$ can be strictly greater than one.
}
\end{rmk}

\begin{conj}\label{conj:voisinNL}
Let $\mathfrak{M}^+_{h^\perp}$ be the moduli space of polarized deformations of a projective IHS variety $X$, $\mathfrak{D}_N\subset \mathfrak{M}_H$ the  Noether-Lefschetz locus corresponding to a lattice $N$ such that $b_2(X)-\mathrm{rank}(N)\geq 3$ and $U\subset \mathfrak{D}_N$ a dense subset. Then, for every $t\in U$ and every $1\leq i \leq dim(X)/2$, 
there exist an integer $m>0$ and a codimension $i$ Voisin's coisotropic subvariety $Z_{t,i}\subset X$ with good RCC orbits of type $m (H_{t})^\vee$, where  $H_t\in \Pic({X}_t)$ is the primitive polarization such that $\phi_t(H_t)=h$.
\end{conj}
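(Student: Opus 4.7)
The plan is to prove Conjecture \ref{conj:voisinNL} by choosing the dense subset $U \subset \mathfrak{D}_N$ to consist of IHS varieties for which the existence of Voisin coisotropic subvarieties has already been established by a direct geometric construction, and then transporting the output back to $X_t$ along the birational identification guaranteed by the Noether-Lefschetz condition. For the $K3^{[n]}$ and generalized Kummer deformation types, Corollary \ref{ex:moduli} supplies a natural candidate: inside any Noether-Lefschetz locus $\mathfrak{D}_N$ of sufficient corank, the sublocus $U$ of IHS birational to a moduli space of Gieseker-stable sheaves on a projective $K3$ surface (respectively an Albanese fibre on an abelian surface) is dense. On these moduli spaces the existence of Voisin coisotropic subvarieties of every codimension $1 \leq i \leq n$ follows from Shen-Yin-Zhao \cite{SYZ} in the $K3$ case and from Lin \cite{Lin2} in the abelian case. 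Using the Hodge-theoretic identification provided by Theorem \ref{thm:corhuy} and the birational invariance of the Chow group of zero-cycles on smooth projective varieties, one pulls the coisotropic subvariety back to $X_t$ for each $t \in U$. As a consistency check, the case $i=1$ (for \emph{all} $t\in \mathfrak{D}_N$, not only on a dense subset) is already known from \cite{CP} and \cite{MP}.

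The main obstacle, and the step requiring essentially new input, is controlling the numerical type of the RCC orbits so that they are ruled by rational curves of class a positive integer multiple of $(H_t)^\vee$, rather than of some unrelated curve class dictated by the moduli problem. The constructions of \cite{SYZ} and \cite{Lin2} produce orbits covered by chains of rational curves whose classes come from the geometry of the moduli problem (extremal wall-crossing classes, Brill-Noether loci on relative Hilbert schemes, fibres of an associated Lagrangian fibration, etc.), and a priori these classes need not lie in the polarization direction. To remedy this I would invoke Theorem \ref{thm:nl_density} to enlarge $N$ by a well-chosen auxiliary class and refine $U$ to a dense sub-sublocus on which a suitable Fourier-Mukai partner realizes the sheaf-theoretic rational curves precisely as multiples of $(H_t)^\vee$; a chain-recombination argument, based on the fact that two points on the same orbit are connected by arbitrarily many distinct rational chains once enough algebraic curve classes are present, would then deliver orbits whose ruling class lies in $\mathbb Z_{>0} \cdot (H_t)^\vee$.

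Goodness of the RCC orbits in the sense of items (i)--(ii) of Theorem \ref{thm:densityprinciple} would follow from Remark \ref{rmk:good} together with a standard dimension count on the moduli of genus-zero stable maps on the reference sheaf-theoretic model, provided the above chain-recombination can be carried out while preserving primitivity of the ruling class. For the remaining deformation types (OG$_6$ and OG$_{10}$) no analogue of \cite{SYZ}/\cite{Lin2} is currently available in sufficient generality, and this is the most serious obstacle; a plausible workaround would be to replace $U$ by a dense locus of symplectic resolutions of singular moduli spaces on which one can construct coisotropic subvarieties by hand, mirroring the strategy used for the Fano variety of lines on a cubic fourfold and the LLSV eightfold in \cite{V15}, but this would require extensions of existing results that appear out of reach at present.
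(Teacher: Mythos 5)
The statement you are trying to prove is a \emph{conjecture} in this paper, not a theorem: the authors offer no proof of Conjecture \ref{conj:voisinNL}. Their only result about it is Theorem \ref{thm:equiv}, which shows it is \emph{equivalent} to Conjecture \ref{conj:voisin2} (a strengthening of Voisin's Conjecture 0.4 in \cite{V15}) and to the other variants \ref{conj:voisinNLforte} and \ref{conj:voisinfortissima}; the equivalence is exactly the point of Section 4, since it reduces the general statement to a statement on a Noether--Lefschetz sublocus. The only piece of it that the paper actually establishes is the case $i=1$ for the $K3^{[n]}$ and generalized Kummer deformation types, via the explicit construction of rational curves of class dual to $H$ in Theorem \ref{onethmtounirulethemall} (together with \cite{CP}, \cite{MP}). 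So a complete proof of the statement as written would settle a major open problem, and should be treated with corresponding suspicion.

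On its own terms your proposal has a genuine gap precisely at the step you flag as ``requiring essentially new input.'' The results you invoke (\cite{SYZ} for moduli of stable objects on a $K3$, \cite{Lin2} for Lagrangian fibrations) prove Voisin's original Conjecture \ref{conj:voisin}, i.e.\ the existence of a maximal-dimensional component of $S_i(X)$; they do \emph{not} produce coisotropic subvarieties with RCC orbits, let alone orbits of a prescribed type $m(H_t)^\vee$ satisfying the ``goodness'' conditions (i)--(ii) of Theorem \ref{thm:densityprinciple} (an irreducible $(2n-2)$-dimensional component of $\overline{M}_0(X_t,[h_t]^\vee)$ whose evaluation image has dimension $2n-i$). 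The ``chain-recombination argument'' you propose to convert whatever curve classes arise from wall-crossing or Brill--Noether geometry into positive multiples of $(H_t)^\vee$ is not substantiated, and the paper itself signals that this is delicate: by \cite{OSY} (see Remarks \ref{rmk:oberdieck} and \ref{rmk:oberdieck2}) the multiplier $m_i$ can be forced to be $>1$, so one cannot expect primitive classes in general, and the numerical type of the ruling curves is genuinely constrained. Enlarging $N$ by auxiliary classes via Theorem \ref{thm:nl_density} changes the dense sublocus but does not by itself change the curve class ruling a given orbit. Finally, for the O'Grady deformation types you concede there is no input at all, whereas the conjecture is stated for an arbitrary projective IHS variety. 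In short: what you have written is a research program, consistent in spirit with the reduction the authors prove in Theorem \ref{thm:equiv}, but it is not a proof, and the paper does not contain one either.
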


\begin{conj}\label{conj:voisinNLforte}
Let $\mathfrak{M}^+_{h^\perp}$ be the moduli space of polarized deformations of a projective IHS variety $X$, $\mathfrak{D}_N\subset \mathfrak{M}_H$ the  Noether-Lefschetz locus corresponding to a lattice $N$ such that $b_2(X)-\mathrm{rank}(N)\geq 3$ and $U\subset \mathfrak{D}_N$ a dense subset. Then, for every $t\in U$, for every  divisor $P_t\in \Pic({X}_t)$ of positive square and every $1\leq i \leq dim(X)/2$, there exist an integer $m_i>0$ and  a codimension $i$ Voisin's coisotropic subvariety $Z_{t,i}\subset X$ with good RCC orbits of type $m_i (P_{t})^\vee$.
\end{conj}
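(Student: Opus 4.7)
The plan is to reduce Conjecture \ref{conj:voisinNLforte} to an explicit geometric construction on a dense Noether-Lefschetz sublocus, exploiting the density principle of Theorem \ref{thm:densityprinciple} together with Theorem \ref{thm:nl_density}. Note that the conjecture asks for an existence statement at each $t\in U$ and for each positive polarization $P_t\in\Pic(X_t)$, so the relevant deformation family is not $\mathfrak{M}^+_{h^\perp}$ itself but, for each such pair, the moduli space $\mathfrak{M}^+_{(h')^\perp}$ of polarized deformations of $(X_t,P_t)$, where $h':=\phi_t(P_t)$.

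First, I would fix $t\in U$, a positive class $P_t\in\Pic(X_t)$ and $1\leq i\leq \dim(X)/2$. Then I would apply Theorem \ref{thm:nl_density} inside $\mathfrak{M}^+_{(h')^\perp}$ with a suitable sub-lattice $N'$ of signature $(0,r)$ satisfying $b_2(X)-\mathrm{rank}(N')\geq 3$, chosen so that the orthogonal complement of $\langle h'\rangle\oplus N'$ has the correct (unimodular) Hodge-theoretic structure, forcing the general point of $\mathfrak{D}_{N'}$ to correspond to a variety birational to a moduli space of sheaves on a $K3$ (respectively on an abelian) surface, as in Corollary \ref{ex:moduli}. By Theorem \ref{thm:nl_density} this $\mathfrak{D}_{N'}$ is dense in $\mathfrak{M}^+_{(h')^\perp}$.

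The next step is to construct, on varieties corresponding to points of $\mathfrak{D}_{N'}$, codimension $i$ subvarieties swept out by $(2n{-}2)$-dimensional families of rational curves of class a positive multiple of $P_t^\vee$, with the images having dimension exactly $2n-i$, so that hypotheses (i) and (ii) of Theorem \ref{thm:densityprinciple} are met. Promising input here comes from Brill-Noether loci of curves in suitable linear systems on the underlying surface, and from the techniques of \cite{SYZ,Lin,CP,MP}. Once this is available on a dense subset, Theorem \ref{thm:densityprinciple} transports the conclusion to every point of $\mathfrak{M}^+_{(h')^\perp}$, in particular to $X_t$, producing the required codimension $i$ Voisin coisotropic subvariety with good RCC orbits of type $m_i P_t^\vee$.

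The main obstacle is clearly the middle step: constructing, in sufficient generality, codimension $i$ coisotropic subvarieties with good RCC orbits of type a multiple of an \emph{arbitrarily chosen} polarization on moduli spaces of sheaves on $K3$ or abelian surfaces. Even granting the existence results already in the literature, most of which are confined to the extremal cases $i=1$ and $i=n$, matching the type to the prescribed $P_t^\vee$ requires fine control of the numerical invariants of the sweeping family of rational curves (in the spirit of Remark \ref{rmk:good} and \cite{OSY}). This surface-level geometric input is where all the essential work lies; once it is provided on a single Noether-Lefschetz sublocus of the correct shape, the density machinery developed in Sections 3 and here then automatically delivers the full strength of Conjecture \ref{conj:voisinNLforte}.
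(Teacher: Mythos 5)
The statement you were asked to prove is labelled a \emph{Conjecture} in the paper (Conjecture \ref{conj:voisinNLforte}); the paper does not prove it, and it remains open. What the paper does prove is Theorem \ref{thm:equiv}, namely that this conjecture is equivalent to Conjectures \ref{conj:voisin2}, \ref{conj:voisinNL} and \ref{conj:voisinfortissima}, via exactly the two ingredients you invoke: density of Noether-Lefschetz loci (Theorem \ref{thm:nl_density}) and the density principle (Theorem \ref{thm:densityprinciple}). Your proposal reproduces that reduction machinery faithfully, but it is not a proof of the statement: you reduce everything to the ``middle step'' of constructing, on moduli spaces of sheaves on $K3$ or abelian surfaces, codimension $i$ coisotropic subvarieties with good RCC orbits whose type matches an arbitrarily prescribed polarization, and you explicitly concede that this is where all the essential work lies without carrying it out. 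That step is precisely the open content of the conjecture. The paper itself only supplies it for $i=1$ (Theorem \ref{onethmtounirulethemall}, via nodal curves and Brill-Noether theory on the underlying surface), and states at the end of Section 5 that the cases of isotropic subvarieties of dimension $\geq 2$ remain a challenging problem; the existence results you cite from the literature are confined to special geometries or to the extremal values of $i$ and do not give the required control on the type $m_i(P_t)^\vee$.

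There is also a structural point worth noting: what you have actually sketched, had the middle step been available, is a proof of the strongest variant, Conjecture \ref{conj:voisinfortissima} (existence for every $t$ and every positive $P_t$), from which Conjecture \ref{conj:voisinNLforte} follows trivially by restriction. That is consistent with the paper's Theorem \ref{thm:equiv}, but it underlines that your argument is a correct rediscovery of the equivalence chain rather than a proof of any one of the four conjectures. To turn it into a proof you would need to exhibit, for at least one dense Noether-Lefschetz sublocus and for each $1\leq i\leq n$, a $(2n-2)$-dimensional irreducible family of rational curves of the prescribed class whose evaluation image has dimension exactly $2n-i$; no such construction is currently known for $2\leq i\leq n-1$.
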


\begin{conj}\label{conj:voisinfortissima}
Let $\mathfrak{M}^+_{h^\perp}$ be the moduli space of polarized deformations of a projective IHS variety $X$. 
Then, for every $t\in \mathfrak{M}^+_{h^\perp}$, for every positive divisor $P_t\in \Pic({X}_t)$ and every $1\leq i \leq dim(X)/2$, there exist an integer $m_i>0$ and a codimension $i$ Voisin's coisotropic subvariety $Z_{t,i}\subset X$ with good RCC orbits of type $m_i (P_{t})^\vee$.
\end{conj}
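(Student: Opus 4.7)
The plan is to reduce Conjecture \ref{conj:voisinfortissima} to Conjecture \ref{conj:voisinNLforte} by combining Theorem \ref{thm:nl_density} with the density principle of Theorem \ref{thm:densityprinciple}. Fix $t_0\in \mathfrak M^+_{h^\perp}$ and a positive class $P_{t_0}\in\Pic(X_{t_0})$. The key move is to re-polarize: set $h':=\phi_{t_0}(P_{t_0})\in \Lambda$, which has positive square since $P_{t_0}$ does, and observe that $(X_{t_0},\phi_{t_0})$ belongs to the component $\mathfrak M^+_{(h')^\perp}$ parametrizing marked deformations in which $P_{t_0}$ remains algebraic. This recasts the desired statement at $(X_{t_0},P_{t_0})$ as one about the polarization class at a point of $\mathfrak M^+_{(h')^\perp}$, which is precisely the setting of Theorem \ref{thm:densityprinciple}.

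Next I would pick a primitive sub-lattice $N\subset (h')^\perp\subset\Lambda$ of signature $(0,r)$ small enough that $b_2(X)-\mathrm{rank}(\langle h',N\rangle)\geq 3$ and the Noether-Lefschetz locus $\mathfrak D_{\langle h',N\rangle}\subset \mathfrak M^+_{(h')^\perp}$ is non-empty; by Theorem \ref{thm:nl_density} this locus is then dense. Granting Conjecture \ref{conj:voisinNLforte} applied to this NL locus and to the positive divisor $P_t:=\phi_t^{-1}(h')$, one obtains a dense subset $U\subset \mathfrak D_{\langle h',N\rangle}$ on which, for each $i$, there exist an integer $m_{t,i}>0$ and a codimension-$i$ Voisin coisotropic subvariety of $X_t$ with good RCC orbits of type $m_{t,i}(P_t)^\vee$. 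Because the relative moduli space of genus-zero stable maps over $\mathfrak M^+_{(h')^\perp}$ of a fixed homology class is of finite type, after further shrinking I may assume the $m_{t,i}=:m_i$ are constant on $U$. By Remark \ref{rmk:good}, this furnishes, for every $t\in U$, an irreducible component $M_t$ of the moduli space of genus-zero stable maps of class $m_i\phi_t^{-1}(h')^\vee$ of dimension $2n-2$, whose evaluation morphism has image of dimension $2n-i$.

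At this point I would invoke Theorem \ref{thm:densityprinciple}, or rather its immediate variant in which the fixed section of the local system $R^{4n-2}\pi_*\mathbb Z$ is taken to be $m_i\phi_t^{-1}(h')^\vee$ in place of the primitive class $[h_t]^\vee$ (the proof is identical). This produces, at every $t\in \mathfrak M^+_{(h')^\perp}$, and in particular at $t_0$, a codimension-$i$ Voisin coisotropic subvariety whose RCC orbits are of type a multiple of $m_i\phi_{t_0}^{-1}(h')^\vee=m_i(P_{t_0})^\vee$, which is the conclusion of Conjecture \ref{conj:voisinfortissima}.

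The principal obstacle is of course Conjecture \ref{conj:voisinNLforte} itself: the argument above only transports a hypothetical construction of coisotropic subvarieties on a Noether-Lefschetz sublocus to the whole moduli space. The interest of this reduction lies in the fact that on such NL loci the IHS varieties acquire substantial extra structure---most notably, by Corollary \ref{ex:moduli} they can be realized as moduli spaces of sheaves on a $K3$ or an abelian surface---which makes the direct construction of rational curves of controlled class sweeping out coisotropics considerably more tractable. A secondary technical point is the verification that the multiples $m_{t,i}$ can be made locally constant and that the goodness of the RCC orbits really does supply a stable-map component of the correct dimension; this is handled by the finite-type remark above together with Remark \ref{rmk:good}.
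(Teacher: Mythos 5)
The statement you are asked to prove is a conjecture, and the paper does not prove it: its only result about Conjecture~\ref{conj:voisinfortissima} is Theorem~\ref{thm:equiv}, which establishes the equivalence of the four conjectures \ref{conj:voisin2}--\ref{conj:voisinfortissima}. You have correctly recognized this, and what you actually supply is a conditional reduction of Conjecture~\ref{conj:voisinfortissima} to Conjecture~\ref{conj:voisinNLforte}, i.e.\ one implication of Theorem~\ref{thm:equiv}. Your route, however, is genuinely different from the paper's. The paper proves the implication through the chain \ref{conj:voisinNLforte} $\Rightarrow$ \ref{conj:voisinNL} $\Rightarrow$ \ref{conj:voisin2} $\Rightarrow$ \ref{conj:voisinfortissima}, where the last step is the delicate one: given an arbitrary positive class $P$ on $X$, one passes to a very general deformation $(Y,H_Y)$ of $(X,P)$, uses the fact that $\Pic(Y)=\mathbb Z H_Y$ to force the (a priori uncontrolled) divisor $H_i$ furnished by Conjecture~\ref{conj:voisin2} to be proportional to $H_Y$, and then transports back via Theorem~\ref{thm:densityprinciple}. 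You instead re-polarize directly by $h'=\phi_{t_0}(P_{t_0})$, apply Conjecture~\ref{conj:voisinNLforte} (in fact only the weaker \ref{conj:voisinNL} is needed, since you only use it for the polarization class itself) on a dense Noether--Lefschetz sublocus of $\mathfrak M^+_{(h')^\perp}$ supplied by Theorem~\ref{thm:nl_density}, and conclude by the density principle. This short-circuits the detour through Conjecture~\ref{conj:voisin2} and the very-general-Picard-rank-one trick; what the paper's longer chain buys in exchange is the intermediate equivalence with the deformation-invariant Conjecture~\ref{conj:voisin2}, which is the form closest to Voisin's original statement.

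Two points deserve care. First, the membership $(X_{t_0},\phi_{t_0})\in\mathfrak M^+_{(h')^\perp}$ requires $P_{t_0}$ to lie in the positive cone $\mathcal C_{X_{t_0}}$ and to be (replaced by) a primitive class; both are harmless normalizations but should be said. Second, your passage from ``for each $t\in U$ there is some $m_{t,i}$'' to ``$m_{t,i}$ is constant on a dense subset'' is not justified by finite type alone: for each fixed $m$ the relative space of stable maps of class $m(h')^\vee$ is of finite type, but $U=\bigcup_m U_m$ being dense does not force any single $U_m$ to be dense, and Theorem~\ref{thm:densityprinciple} needs a dense set with a \emph{fixed} curve class. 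This gap is real but is shared verbatim by the paper's own proof of the implication \ref{conj:voisin2} $\Rightarrow$ \ref{conj:voisinfortissima}, where the integer $m$ produced at each very general point may likewise vary; so it is a defect of the argument in the paper as much as of yours, not a point where your approach specifically fails.
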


\begin{thm}\label{thm:equiv}
Let $X$ be a projective IHS variety with $b_2(X)\geq 4$. Then the 4 above conjectures \ref{conj:voisin2}, \ref{conj:voisinNL},  \ref{conj:voisinNLforte} and \ref{conj:voisinfortissima} are equivalent. 
\end{thm}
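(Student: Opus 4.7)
The plan is to prove the cycle of implications
$$
\ref{conj:voisinfortissima}\Rightarrow \ref{conj:voisinNLforte}\Rightarrow \ref{conj:voisinNL}\Rightarrow \ref{conj:voisin2}\Rightarrow \ref{conj:voisinfortissima},
$$
where the first three arrows will be essentially formal and only the last will require real work. For the first two I observe that \ref{conj:voisinNLforte} is obtained from \ref{conj:voisinfortissima} simply by restricting to the dense subset $U\subset \mathfrak D_N\subset \mathfrak M^+_{h^\perp}$, and that \ref{conj:voisinNL} is obtained from \ref{conj:voisinNLforte} by specializing the positive divisor $P_t$ to be the polarization $H_t$. For \ref{conj:voisinNL}$\Rightarrow$\ref{conj:voisin2}, given an IHS $X$ and any primitive positive divisor $h$ on $X$, I apply \ref{conj:voisinNL} inside $\mathfrak M^+_{h^\perp}$ with the rank-one lattice $N:=\langle h\rangle$: the hypothesis $b_2(X)\geq 4$ ensures $b_2(X)-\rk N\geq 3$, the Noether-Lefschetz locus $\mathfrak D_{\langle h\rangle}$ coincides with $\mathfrak M^+_{h^\perp}$ itself (so that $U:=\mathfrak M^+_{h^\perp}$ is trivially dense in $\mathfrak D_{\langle h\rangle}$), and the resulting conclusion at the point $(X,\phi)\in U$ is precisely \ref{conj:voisin2} with $H_i:=h$.

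The heart of the proof will be the remaining implication \ref{conj:voisin2}$\Rightarrow$\ref{conj:voisinfortissima}. Given $(X,\phi)$ and any positive primitive divisor $P$ on $X$, I set $p:=\phi(P)$ and work inside the connected component of $\mathfrak M^+_{p^\perp}$ containing $(X,\phi)$. Standard Noether-Lefschetz theory shows that the locus
$$
W:=\{\,t\in\mathfrak M^+_{p^\perp}\mid \Pic(X_t)=\mathbb Z\cdot P_t\,\}
$$
is euclidean dense in $\mathfrak M^+_{p^\perp}$: its complement is the countable union of the Noether-Lefschetz sub-loci $\mathfrak D_N$ for $N\supsetneq \langle p\rangle$, each a proper analytic subset of the Baire space $\mathfrak M^+_{p^\perp}$. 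At every $t\in W$ the primitive positive divisor $H_i$ supplied by \ref{conj:voisin2} is forced to equal $P_t$, and one obtains a codimension-$i$ Voisin coisotropic subvariety with good RCC orbits of type $m_t(P_t)^\vee$ for some positive integer $m_t=m_t(i)$.

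Next, for each $m\geq 1$ I let $V_m\subset\mathfrak M^+_{p^\perp}$ denote the locus of points $t$ at which the relative Kontsevich moduli space of genus-zero stable maps of class $m(P_t)^\vee$ admits a component of dimension $2n-2$ whose evaluation morphism has image of dimension $2n-i$. By the previous step $W\subset\bigcup_m V_m$, and each $V_m$ is constructible, being cut out from a scheme of finite type over $\mathfrak M^+_{p^\perp}$ by the image of a proper morphism (the evaluation) subject to upper-semicontinuous dimension conditions. Since $W$ is non-meager, a Baire category argument combined with the irreducibility of the connected component and the constructibility of the $V_m$ will yield some $V_m$ which is euclidean dense in $\mathfrak M^+_{p^\perp}$. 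I then apply the density principle (Theorem \ref{thm:densityprinciple}), extended in the evident way by replacing the class $[h_t]^\vee$ by the curve class $m(P_t)^\vee$---an extension which is immediate since the original proof only uses that the relative Kontsevich moduli of a fixed homology class is of finite type over the base---to conclude that every point of $\mathfrak M^+_{p^\perp}$, and in particular $(X,\phi)$, carries a codimension-$i$ Voisin coisotropic with good RCC orbits of type a multiple of $m(P)^\vee$, which is exactly \ref{conj:voisinfortissima} applied to the pair $(X,P)$.

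The hard part will be the Baire step above, and specifically a careful verification that each $V_m$ is constructible in the required sense: this requires a precise analysis of the components of the relative Kontsevich moduli space, the upper-semicontinuity of the fiber dimensions over the base, and the behaviour of the image of the proper evaluation morphism. Once this is granted, the two auxiliary ingredients---the extension of Theorem \ref{thm:densityprinciple} to non-primitive curve classes and the density of the Picard-rank-one locus---are standard.
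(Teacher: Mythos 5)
Your proposal follows the same cycle of implications as the paper, \ref{conj:voisinfortissima}$\Rightarrow$\ref{conj:voisinNLforte}$\Rightarrow$\ref{conj:voisinNL}$\Rightarrow$\ref{conj:voisin2}$\Rightarrow$\ref{conj:voisinfortissima}, and rests on the same two pillars: density of suitable Noether--Lefschetz (resp.\ Picard-rank-one) loci and the density principle of Theorem \ref{thm:densityprinciple}. There are two points where you deviate, both defensibly. For \ref{conj:voisinNL}$\Rightarrow$\ref{conj:voisin2} you take $N=\langle h\rangle$, so that $\mathfrak D_N$ is all of $\mathfrak M^+_{h^\perp}$ and the conclusion at the given point is immediate; the paper instead invokes Theorem \ref{thm:nl_density} for a general $N$ together with Theorem \ref{thm:densityprinciple}. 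More significantly, in \ref{conj:voisin2}$\Rightarrow$\ref{conj:voisinfortissima} the paper restricts to very general (Picard rank one) deformations of $(X,P)$ exactly as you do, but then applies Theorem \ref{thm:densityprinciple} directly, silently assuming that the multiple $m$ furnished by Conjecture \ref{conj:voisin2} can be taken uniform over a dense set of deformations; your Baire-category extraction of a single $m$ with $V_m$ dense addresses precisely this point, which matters because the density principle only works for a fixed curve class (the relative Kontsevich space of a fixed class being of finite type). The price is the constructibility of the loci $V_m$, which you correctly flag as the remaining work; this is best carried out on the quasi-projective quotient $\mathcal F_h$ of Remark \ref{rmk:fam} (or chart by chart on Kuranishi spaces), where a non-meager constructible subset of an irreducible variety does contain a Zariski-dense open subset and is therefore euclidean dense. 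The extension of Theorem \ref{thm:densityprinciple} from $[h]^\vee$ to a fixed multiple $m[h]^\vee$ is, as you say, immediate from its proof. In short, your argument is correct and, on the uniformity of $m$, more careful than the published one.
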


\begin{proof}[Proof of Theorem \ref{thm:equiv}]
Clearly, Conjecture \ref{conj:voisinfortissima} implies Conjecture \ref{conj:voisinNLforte} which implies Conjecture \ref{conj:voisinNL}. Let us show that Conjecture \ref{conj:voisinNL} implies Conjecture \ref{conj:voisin2}. 
Manifolds in the Noether-Lefschetz locus $\mathfrak{D}_N$ are dense in the moduli space $\mathfrak{M}^+_{h^\perp}$ by Theorem \ref{thm:nl_density}. 
 As Conjecture \ref{conj:voisinNL} holds on $\mathfrak{D}_N$ , we can apply Theorem \ref{thm:densityprinciple} to obtain Conjecture \ref{conj:voisin2}. Finally, let us prove that the first Conjecture implies the last. Again, this is a simple corollary of Theorem \ref{thm:densityprinciple}. Indeed, let $H\in \Pic(X)$ be any primitive positive class and let $(Y,H_Y)$ be a very general deformation of $(X,H)$. By Conjecture \ref{conj:voisin2}, we have a rational curve of class $mH^\vee$ which connects any two points in a general fibre of a codimension $i$ coisotropic variety. As $(Y,H_Y)$ is very general, the only  curve classes are given by the multiples of $H_Y^\vee$ and thus we can apply Theorem \ref{thm:densityprinciple} to obtain the result for $(X,H)$.
\end{proof}


\section{Existence of rational curves via density}

In this section we will use linear series on surfaces to construct a dense set of points corresponding to IHS containing a rational curve for the moduli spaces of pairs of deformations of \kntiposp or generalized Kummer varieties type. By Theorem \ref{thm:densityprinciple}, this will be enough to prove that all such IHS contain a rational curve whose Beauville-Bogomolov dual class is (a multiple of) the polarization. The result of this section is the following:
\begin{thm}\label{onethmtounirulethemall}
Let $(X,H)$ be a polarized manifold of \kntiposp or of Kummer type, with $H$ ample and primitive. Then there exists a rational curve whose class is dual to $|H|$.
\end{thm}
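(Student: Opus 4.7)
My plan is to invoke the density principle Theorem \ref{thm:densityprinciple} with $k=1$. Its conclusion provides, on every member of $\mathfrak{M}^+_{h^\perp}$, a uniruled divisor whose ruling consists of rational curves of class a positive multiple of $H^\vee$; this is strictly stronger than, and implies, the desired statement of Theorem \ref{onethmtounirulethemall}. To feed Theorem \ref{thm:densityprinciple}, I need to exhibit a euclidean-dense subset $\mathfrak{D}\subset \mathfrak{M}^+_{h^\perp}$ such that, for every $t\in\mathfrak{D}$, the moduli space $\overline{M}_0(X_t,[h_t]^\vee)$ has an irreducible component of dimension $2n-2$ whose evaluation morphism has $(2n-1)$-dimensional image.

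The natural candidate for $\mathfrak{D}$ is supplied by Corollary \ref{thm:density}: the locus of pairs birational to a polarized Hilbert scheme $(S^{[n]},H_S)$ (respectively to a polarized generalized Kummer $(K_n(A),H_A)$) is dense in $\mathfrak{M}^+_{h^\perp}$. It is therefore enough to construct the required family of rational curves on such Hilbert schemes and on generalized Kummer varieties.

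For the $K3^{[n]}$ case, one writes the polarization of $S^{[n]}$ as $H=L-k\delta$ with $L\in\Pic(S)$ and $2\delta$ the exceptional class. Fixing an integer $m$ (to be pinned down by the class-matching constraint below), one considers the equigeneric locus of curves $C\in|mL|$ whose normalization $\widetilde{C}$ has geometric genus $n$, and picks on each such $\widetilde{C}$ a pencil in $W^1_n(\widetilde{C})$. The resulting degree-$n$ map $\widetilde{C}\to\mathbb{P}^1$ induces a morphism $\mathbb{P}^1\to S^{[n]}$ sending each divisor of the pencil, viewed as an effective length-$n$ subscheme of $S$ via $\widetilde{C}\to C\subset S$, to the corresponding point of $S^{[n]}$ (this is exactly the recipe illustrated in the example preceding Theorem \ref{thm:densityprinciple}). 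The multiplier $m$ is chosen so that the resulting class in $H_2(S^{[n]},\mathbb{Z})$ is a positive multiple of $H^\vee$, which amounts to matching the coefficients along $L^\vee$ and $\delta^\vee$; the family of such maps is expected to have dimension $2n-2$, with the contributions coming from the Severi locus of suitably nodal curves in $|mL|$ together with the Brill-Noether dimension of $W^1_n$ on the normalization, and its image should be a divisor. The generalized Kummer case proceeds by the same recipe, working on an abelian surface $A$ and passing to $K_n(A)$ as the fiber over $0$ of the summation map $A^{[n+1]}\to A$.

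\textbf{Main obstacle.} The delicate points are twofold. First, exhibiting enough nodal curves in $|mL|$ whose normalizations carry a $\mathfrak{g}^1_n$ is a non-trivial Brill-Noether condition which becomes restrictive for large $n$; one must choose $m$ and the Severi strata carefully to produce the needed moduli while keeping the evaluation of generically maximal rank. Second, one must verify precisely that the resulting curve class is a positive multiple of $H^\vee$, and that the full family (varying both $C$ and the pencil) has dimension exactly $2n-2$ and covers a divisor in $S^{[n]}$. Both tasks depend on the compatibility of the decomposition $H=L-k\delta$ with the available linear systems on $S$, and on the corresponding statements for abelian surfaces in the Kummer case; it is here that ampleness and primitivity of $H$ are expected to be used in an essential way.
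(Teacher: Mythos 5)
Your overall strategy (produce the curves on a dense set of Hilbert schemes/generalized Kummers via nodal curves and pencils on their normalizations, then spread out) is the paper's, but two of your steps do not go through as stated. First, you route the conclusion through Theorem \ref{thm:densityprinciple} with $k=1$, i.e.\ you ask for a $(2n-2)$-dimensional component of $\overline{M}_0(X_t,[h_t]^\vee)$ whose evaluation image is a \emph{divisor}. That hypothesis is not available in general: by \cite[Corollary A.3]{OSY} (see Remark \ref{rmk:oberdieck2} and the Introduction) one cannot always rule a divisor by rational curves of primitive class, and indeed the families produced by Proposition \ref{prop:seriesbound} have dimension governed by $\min\{p-\delta,2(n-1+\epsilon)\}$, so for the geometric genera needed to hit a given polarization they may sweep only a coisotropic subvariety of codimension $>1$. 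The paper's proof deliberately avoids any such dimension hypothesis: it only needs \emph{non-emptiness} of the fibres of $\overline{\mathcal M}_0(\mathcal X/M,[h]^\vee)\to M$ over a dense subset, and concludes by properness (closedness of the image) of the relative moduli space of stable maps. For the bare existence statement of Theorem \ref{onethmtounirulethemall} that is all that is required, and it also yields the class $H^\vee$ on the nose rather than a multiple of it.

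Second, your construction fixes the geometric genus of the nodal curves to be $n$ and takes a $\mathfrak{g}^1_n$ on the normalization. By \cite[Lemma 3.3]{KLM} the induced rational curve in $S^{[n]}$ has class $mL-(g+n-1+\epsilon)\tau_n$, so with $g=n$ the coefficient of $\tau_n$ is frozen at $2n-1+\epsilon$; no choice of the multiplier $m$ can then make this class proportional to $H^\vee$ for a polarization $H=L-k\delta$ of arbitrary divisibility, so the ``class-matching constraint'' you defer to is in general unsatisfiable within your recipe. The paper instead lets the geometric genus $g\geq n$ and the number of nodes $\delta_{\min}+r$ vary (Proposition \ref{prop:conticurve}), realizing all coefficients $g+n-1+\epsilon$ and all squares, and then — this is the step entirely missing from your outline — invokes the monodromy-invariant classification of connected components of $\mathcal M_{2d,t}$ from \cite[Thm.\ 2.4]{CP} and \cite[Thm.\ 4.2]{MP} to show (Proposition \ref{prop:sisondensi}) that the resulting pairs $(S^{[n]}_\epsilon,D_{g,r,\delta_{\min}})$ are dense in \emph{every} connected component of the moduli space of polarized pairs. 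Without that component-matching argument, density of Hilbert schemes alone (your Corollary \ref{thm:density} input) does not guarantee that the curves you build have dual class equal to the given polarization on a dense set.
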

As it is convenient when working with this deformation classes, we will use the index $\epsilon\in\{0,1\}$ to distinguish between them. Therefore in the following $S_{\epsilon}$ and $S^{[n]}_\epsilon$ will be a $K3$ surface and its Hilbert scheme of $n$ points when $\epsilon=0$ or an abelian surface and its generalized Kummer when $\epsilon=1$. When $H\subset S_\epsilon$ is a divisor, we will denote with $\{H\}$ the connected component of $Hilb(S_\epsilon)$ containing $|H|$.
 We will use nodal curves, therefore we use the following result from \cite{CK} and \cite{KLMcurve}. 

\begin{prop}\label{prop:seriesbound}
Let $(S_\epsilon,H)$ be a general  polarized $K3$ or abelian surface of genus $p:=p_a(H)$. Let $\delta$ and $n$ be integers satisfying $0 \leq \delta \leq p-2\epsilon$ and $n+\epsilon \geq 2$.  Then the following hold:
  \begin{itemize}
\item [(i)] There exists a $\mathfrak{g}^1_{n+\epsilon}$ on the normalization of a curve in $\{H\}$ with $\delta$ nodes as singularities if and only if 
\begin{equation} \label{eq:boundA}
\delta \geq \alpha\Big(p-\delta-\epsilon-(n-1+2\epsilon)(\alpha+1)\Big), 
\end{equation}
where
\begin{equation} \label{eq:alpha}
\alpha= \Big\lfloor \frac{p-\delta-\epsilon}{2(n-1+2\epsilon)}\Big\rfloor; 
\end{equation}
\item [(ii)]  whenever nonempty, the scheme of these linear series is equidimensional of dimension  $\min\{p-\delta,2(n-1+\epsilon)\}$.
\end{itemize}
\end{prop}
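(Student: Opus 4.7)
The plan is to verify the hypotheses of the density principle (Theorem~\ref{thm:densityprinciple}) with $k=1$ on the dense sublocus of actual Hilbert schemes (respectively generalized Kummers) provided by Corollary~\ref{thm:density}, by constructing enough rational curves of class a positive multiple of $[H]^\vee$ via the Brill--Noether theoretic input given by Proposition~\ref{prop:seriesbound}. Applying Theorem~\ref{thm:densityprinciple} will then yield on every $(X,H)\in\mathfrak{M}^+_{h^\perp}$ a Voisin coisotropic divisor whose good RCC orbits are rational curves of class a positive multiple of $[H]^\vee$, which in particular answers the statement.

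Concretely, by Corollary~\ref{thm:density} the sublocus $\mathfrak{D}\subset \mathfrak{M}^+_{h^\perp}$ of marked pairs $(X_t,H_t)$ birational to $(S_\epsilon^{[n]}, H_t)$ (respectively $(K_n(A), H_t)$) is dense, and since birational IHS manifolds share the same second cohomology (Theorem~\ref{thm:corhuy}) and thus the same dual classes, it is enough to work with an actual Hilbert scheme or generalized Kummer $X_t$. Writing $H_t = L_t - a\delta_{\mathrm{exc}}$ in the standard decomposition with $L_t$ a primitive positive class on the underlying surface $S_{\epsilon,t}$ and $\delta_{\mathrm{exc}}$ the (half-)exceptional class, the classical construction assigning to a $\delta$-nodal curve $C\in |L_t|$ together with a $\mathfrak{g}^1_{n+\epsilon}$ on its normalization a morphism $\mathbb{P}^1\to X_t$ (via $\tilde{C}^{(n+\epsilon)}\to S_{\epsilon,t}^{(n+\epsilon)}$ composed with the Hilbert--Chow resolution, and in the Kummer case composed with the inclusion of the Albanese fibre) produces, by a standard intersection-theoretic computation, a rational curve on $X_t$ whose class in $H_2(X_t,\mathbb{Z})$ is a positive integer multiple of $[H_t]^\vee$, as soon as $a$, $\delta$ and the pencil degree $n+\epsilon$ are chosen in a compatible way.

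To feed this output into Theorem~\ref{thm:densityprinciple} it remains to check the dimension count: by Proposition~\ref{prop:seriesbound}(i) one can pick $\delta$ so that (\ref{eq:boundA}) is satisfied, and by (ii) the scheme of $\mathfrak{g}^1_{n+\epsilon}$'s on a fixed such nodal curve has dimension $2(n-1+\epsilon)$; combining with the Severi-type $(p_t-\delta)$-dimensional family of $\delta$-nodal curves in $|L_t|$ and subtracting the reparametrization produces a component of dimension $2n-2$ of the moduli space $\overline{M}_0(X_t, m[H_t]^\vee)$ of stable maps, whose evaluation map has generic image of codimension $1$. By Remark~\ref{rmk:good} the RCC orbits are automatically good, so Theorem~\ref{thm:densityprinciple} applies and yields the conclusion on every $(X,H)\in\mathfrak{M}^+_{h^\perp}$.

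The main obstacle is the bookkeeping of the class of the constructed rational curve: the Hilbert--Chow contribution coming from the nodes of $C$ must cancel exactly with the $-a\delta_{\mathrm{exc}}$ term in the decomposition $H_t=L_t-a\delta_{\mathrm{exc}}$, so that the resulting class lies on the ray $\mathbb{Q}_{>0}\cdot [H_t]^\vee$ rather than elsewhere in the positive cone. The delicate point is to show that for every primitive ample $H$ on $X$ there is a simultaneous choice of $L_t$, $a$, $\delta$ and $n+\epsilon$ which both satisfies the numerical constraints of Proposition~\ref{prop:seriesbound} and yields the correct proportionality in $H_2(X_t,\mathbb{Z})$; this is essentially a Diophantine matching problem between the Beauville--Bogomolov form on $\Lambda$ and the Brill--Noether bound on the surface side.
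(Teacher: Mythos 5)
Your proposal does not address the statement in question. Proposition~\ref{prop:seriesbound} is a purely surface-theoretic Brill--Noether/Severi-variety statement: for a \emph{general} polarized $K3$ or abelian surface $(S_\epsilon,H)$ it asserts (i) a sharp numerical criterion \eqref{eq:boundA} for the existence of a $\mathfrak{g}^1_{n+\epsilon}$ on the normalization of a $\delta$-nodal curve in $\{H\}$, and (ii) that the scheme of such linear series is equidimensional of dimension $\min\{p-\delta,2(n-1+\epsilon)\}$. What you have written is instead a sketch of the proof of Theorem~\ref{onethmtounirulethemall} (existence of rational curves dual to the polarization on deformations of $S^{[n]}$ and generalized Kummers), i.e.\ of the \emph{application} of Proposition~\ref{prop:seriesbound} via the density principle. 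Worse, your argument explicitly invokes Proposition~\ref{prop:seriesbound}(i) and (ii) as inputs (``by Proposition~\ref{prop:seriesbound}(i) one can pick $\delta$\dots and by (ii) the scheme of $\mathfrak{g}^1_{n+\epsilon}$'s\dots has dimension $2(n-1+\epsilon)$''), so as a proof of that proposition it is circular.

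For the record, the paper does not prove Proposition~\ref{prop:seriesbound} either: it is quoted verbatim from the literature, namely from Ciliberto--Knutsen \cite{CK} for the $K3$ case ($\epsilon=0$) and from Knutsen--Lelli-Chiesa--Mongardi \cite{KLMcurve} for the abelian case ($\epsilon=1$). A genuine proof would require the degeneration and Severi-variety techniques of those papers (existence of nodal curves with prescribed gonality on the normalization, and the dimension count for the $k$-gonal loci inside the Severi variety), none of which appears in your proposal. Nothing in the density machinery of Sections~3--4 can produce statement (i) or (ii); the logical flow goes the other way.
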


We have a natural map from the $\mathfrak{g}^{1}_{n+\epsilon}$ on the curve to the Hilbert scheme $S^{[n+\epsilon]}$ which, up to translation, lands in $S_{\epsilon}^{[n]}$. The class of this rational curve $R_{n}^{p-\delta}$ in $S^{[n]}_\epsilon$ is computed in \cite[Lemma 3.3]{KLM} and is equal to $H-(p-\delta+n-1+\epsilon)\tau_n$, where $\tau_n$ is the class of a fibre of the exceptional divisor $2\Delta_n$ of the Hilbert-Chow morphism. Let $e=GCD(2n-2+4\epsilon,p-\delta+n-1+\epsilon)$, $f=(p-\delta$ and $d=(2n-2+4\epsilon)/e$. Then the class $dH-f\Delta_n$ is the class of a divisor whose dual curve is $R_{n}^{p-\delta}$.



Let us construct enough curves to obtain a dense subset of all moduli spaces of pairs:
Let $g=p-\delta$ be the geometric genus and let $k=g-\epsilon$ modulo $2(n-1+2\epsilon)$ be in the interval $[0,2(n-1+2\epsilon)-1]$. Let $\alpha=(g-\epsilon-k)/(2n-2+4\epsilon)$ and let $\delta_{min}$ be $\frac{(g-\eps-k)}{2}(\alpha-1)+k\alpha$.
\begin{prop}\label{prop:conticurve}
Keep notation as above, then for all $g\geq n$, all $r\in\mathbb{N}$ and all pairs $(S_{\eps},H)$ of genus $g+r+\delta_{min}$ there exists a curve in $\{H\}$ with $\delta_{min}+r$ nodes, geometric genus $g$ and a $\mathfrak{g}^1_{n+\epsilon}$ on its normalization such that the associated rational curve $R_{g,\delta_{min}+r}$ in $S_{\epsilon}^{[n]}$ has class $H-((g-\epsilon)+(n-1+2\epsilon))\tau_n$ and square $(2r-2+2\epsilon)-\frac{(n-1+2\epsilon-k)^2}{2(n-1+2\epsilon)}$.
\end{prop}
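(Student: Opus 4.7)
The plan is to decompose the statement into three independent pieces and verify each: (a) existence of a curve of the claimed type with the prescribed numerical invariants, (b) identification of its class in $H_2(S_{\epsilon}^{[n]},\mathbb{Z})$, and (c) a calculation of its Beauville-Bogomolov square.

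For (a), I would simply invoke Proposition \ref{prop:seriesbound}(i). Set $p:=p_a(H)=g+r+\delta_{min}$ and $\delta:=\delta_{min}+r$, so that $p-\delta=g$ and $p-\delta-\epsilon=g-\epsilon=2(n-1+2\epsilon)\alpha+k$ by definition of $\alpha$ and $k$. Consequently the floor in (\ref{eq:alpha}) equals $\alpha$, and a direct expansion gives
\[
 \alpha\bigl(p-\delta-\epsilon-(n-1+2\epsilon)(\alpha+1)\bigr)=\alpha\bigl[(n-1+2\epsilon)(\alpha-1)+k\bigr]=\delta_{min}.
\]
Thus the inequality (\ref{eq:boundA}) becomes $\delta_{min}+r\ge \delta_{min}$, which always holds. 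The two auxiliary bounds $0\le \delta\le p-2\epsilon$ and $n+\epsilon\ge 2$ reduce to $g\ge 2\epsilon$ and are immediate from $g\ge n\ge 2-\epsilon$. Proposition \ref{prop:seriesbound}(i) therefore produces the desired nodal curve in $\{H\}$ equipped with a $\mathfrak g^{1}_{n+\epsilon}$ on its normalization.

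For (b), the class of the associated rational curve $R_{g,\delta_{min}+r}\subset S_\epsilon^{[n]}$ is read off from \cite[Lemma 3.3]{KLM} with $p-\delta=g$: this gives $H-(g+n-1+\epsilon)\tau_n$, which is exactly the stated class after rewriting $g+n-1+\epsilon=(g-\epsilon)+(n-1+2\epsilon)$.

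For (c), the square is a purely algebraic verification using the Beauville-Bogomolov form on the \kntipo{} (resp.\ generalized Kummer) lattice, namely $q(H)=2p_a(H)-2+2\epsilon$ and $q(\tau_n)=-1/\bigl(2(n-1+2\epsilon)\bigr)$ under the duality $H_2\hookrightarrow H^2\otimes \Q$ described in Section~2. Writing $m:=g+n-1+\epsilon$ and substituting $p_a(H)=g+r+\delta_{min}$ gives
\[
 q(R_{g,\delta_{min}+r})=2g+2r+2\delta_{min}-2+2\epsilon-\frac{m^2}{2(n-1+2\epsilon)}.
\]
The claim then reduces to the identity
\[
 2g+2\delta_{min}=\frac{m^2-(n-1+2\epsilon-k)^2}{2(n-1+2\epsilon)},
\]
which, after factoring the right hand side as $\bigl(g+2(n-1+2\epsilon)-k\bigr)(g+k)/\bigl(2(n-1+2\epsilon)\bigr)$ and substituting $g-\epsilon=2(n-1+2\epsilon)\alpha+k$ and $\delta_{min}=\alpha\bigl[(n-1+2\epsilon)(\alpha-1)+k\bigr]$, is an elementary algebraic identity in $\alpha$ and $k$.

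The main (but still routine) obstacle is the bookkeeping in step (c): the Beauville-Bogomolov pairing is defined on $H^2$ while $R_{g,\delta_{min}+r}$ naturally lives in $H_2$, so some care is needed in passing through the duality and keeping track of the normalization $q(\tau_n)=-1/(2(n-1+2\epsilon))$ in both the \kntipo{} and the Kummer case (the factor $2(n-1+2\epsilon)$ is what unifies the two cases under the single index~$\epsilon$). Once this is set up, steps (a), (b) and the algebraic simplification in (c) are straightforward and the proposition follows.
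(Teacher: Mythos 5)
Your overall strategy coincides with the paper's: existence via Proposition \ref{prop:seriesbound}, the class via \cite[Lemma 3.3]{KLM}, and a direct Beauville--Bogomolov computation for the square. Steps (a) and (b) are correct, and in (a) you are in fact more explicit than the paper, which merely observes that $\delta_{min}$ is the minimal number of nodes allowed by \eqref{eq:boundA}.

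Step (c), however, contains two compensating errors of size $2\epsilon$, so the ``elementary algebraic identity'' you reduce to is false in the abelian case $\epsilon=1$ and the verification you describe would not close. First, adjunction on a surface with trivial canonical bundle gives $q(H)=H^2=2p_a(H)-2$ for K3 \emph{and} abelian surfaces alike; your $q(H)=2p_a(H)-2+2\epsilon$ is off by $2\epsilon$. Second, setting $N:=n-1+2\epsilon$ and $m:=g+n-1+\epsilon=(g-\epsilon)+N$, one has $m^2-(N-k)^2=(g-\epsilon+k)(g-\epsilon+2N-k)$ (your factorization drops both $-\epsilon$'s), and substituting $g-\epsilon=2N\alpha+k$ and $\delta_{min}=\alpha\bigl[N(\alpha-1)+k\bigr]$ yields
$$
\frac{m^2-(N-k)^2}{2N}=2(g+\delta_{min})-2\epsilon,
$$
not $2(g+\delta_{min})$. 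The two discrepancies cancel, which is why your final formula agrees with the statement, but had you actually carried out the algebra you call routine you would have found your identity off by $2\epsilon$. The repair is immediate: start from $q(H)=2p-2$ and verify $2g+2\delta_{min}-2\epsilon=\frac{m^2-(N-k)^2}{2N}$, which is exactly the identity the paper records in the equivalent form $g+\delta_{min}=\frac{m^2-k^2}{4N}-\frac{N-2k}{4}+\epsilon$. A very minor further point: your claim that $\delta\le p-2\epsilon$ follows from $g\ge n\ge 2-\epsilon$ misses the case $n=1$, $\epsilon=1$ (where $g\ge n$ only gives $g\ge 1<2\epsilon$); that case is a Kummer K3 and outside the intended range anyway.
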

\begin{proof}
The existence statement is clear, as $\alpha$ is as in \eqref{eq:alpha} and $\delta_{min}$ is the minimal number of nodes satisfying \eqref{eq:boundA}, therefore this is a direct consequence of Prop. \ref{prop:seriesbound}. If we write the genus of $(S,H)$ as $(g-\epsilon-k)+r+\delta_{min}+\epsilon+k$, standard algebraic computations give us that 
$$g+\delta_{min}=\frac{(g+\epsilon+n-1)^2)-k^2}{4(n-1+2\epsilon)}-\frac{n-1+2\epsilon-2k}{4}+\epsilon.$$ 
Therefore the square of $R_{g,\delta_{min}+r}$ is

$$q(R_{g,\delta_{min}+r})=(2r-2+2\epsilon)-\frac{(n-1+2\epsilon-k)^2}{2(n-1+2\epsilon)}.$$
\end{proof}
From the class of the above curve, it is clear that the divisibility of the dual divisor $D_{g,r,\delta_{min}}$ is determined by the integer $k$: $H-((g-\epsilon)+(n-1+2\epsilon))\tau_n=H-(\alpha+1/2)\Delta_n-k\tau_n=L-k'\tau_n$, where $L$ is a class in $H^{2}(S^{[n]}_\epsilon)$ and $-n+1-2\epsilon \leq k'<n-1+2\epsilon$. Hence the dual divisor has divisibility $t$, where $t$ is the order of $k'$ modulo $2n-2+4\epsilon$.
\begin{prop}\label{prop:sisondensi}
The set of pairs $(S^{[n]}_\epsilon,D_{g,r,\delta_{min}})$ with $D_{g,r,\delta_{min}}$ dual to the curves $R_{g,\delta_{min}+r}$ of Prop. \ref{prop:conticurve}, $D_{g,r,\delta_{min}}^2=2d$ and $div(D_{g,r,\delta_{min}})=t$ is dense in all connected components of $\mathcal{M}_{2d,t}$. 
\end{prop}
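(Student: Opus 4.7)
The plan is to apply Theorem~\ref{thm:nl_density} to a rank-$2$ Noether-Lefschetz sublocus of each connected component of $\mathcal{M}_{2d,t}$ corresponding to the additional presence of the ``exceptional half'' class $\delta_n$ in the Picard group, and then to identify the generic member of this sublocus with a pair of the form $(S^{[n]}_{\eps}, D_{g,r,\delta_{\min}})$ produced by Proposition~\ref{prop:conticurve}.

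Fix a connected component $\mathcal{M}'$ of $\mathcal{M}_{2d,t}$, corresponding to a monodromy orbit of a primitive class $D\in\Lambda$ of square $2d$ and divisibility $t$. Consider the rank-$2$ sublattice $N=\langle D,\delta_n\rangle\subset\Lambda$, which has signature $(1,1)$ since $D^2>0$ and $\delta_n^2<0$. Because $b_2-\operatorname{rank}(N)=21$ in the $K3^{[n]}$-type case (respectively $5$ in the Kummer case), the hypotheses of Theorem~\ref{thm:nl_density} are satisfied. Applied in its monodromy-equivariant refinement (Remark~\ref{rmk:puremon2}), the theorem shows that the Noether-Lefschetz sublocus $\mathfrak{D}_N\subset\mathcal{M}'$ is dense. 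A generic point of $\mathfrak{D}_N$ has Picard lattice exactly $N$; combining the version of the global Torelli theorem recalled in Theorem~\ref{thm:corhuy} with the description of $\Mon^2(\Lambda)$ from \cite{Mark-prime} (resp. \cite{Mon2}), such a point is birationally of the form $(S^{[n]}_{\eps},D)$ for a polarized $K3$ (resp. abelian) surface $(S_{\eps},L)$ of a computable genus.

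Writing $D=aL-b\delta_n$ in the natural basis of $N\cong\Pic(S^{[n]}_\eps)$, the numerical conditions $D^2=2d$ and $\operatorname{div}(D)=t$ become explicit relations in $a,b,L^2$. Matching these with the formulas collected just before the statement (which express $\operatorname{div}(D_{g,r,\delta_{\min}})$ via the residue of $k$ modulo $2n-2+4\eps$ and $D_{g,r,\delta_{\min}}^2$ via the genus $g+r+\delta_{\min}$ of $L$), one recovers an admissible triple $(g,r,\delta_{\min})$ with $g\geq n$ such that $D=D_{g,r,\delta_{\min}}$. Proposition~\ref{prop:conticurve} then produces the rational curve $R_{g,\delta_{\min}+r}$ for a very general such $(S_{\eps},L)$, and its dual divisor is exactly $D_{g,r,\delta_{\min}}$. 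Hence the construction covers a dense subset of every connected component $\mathcal{M}'$.

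The main obstacle I expect is the careful matching of monodromy orbits with parameter triples: for each connected component $\mathcal{M}'$ one must exhibit at least one admissible triple $(g,r,\delta_{\min})$ realizing its monodromy orbit of polarizations. This amounts to a bookkeeping exercise combining the explicit dependence of the divisibility on $k\bmod(2n-2+4\eps)$ with the freedom to adjust $r$ (and hence the genus of $L$) so that both the square condition and the admissibility bounds of Proposition~\ref{prop:seriesbound} hold. I anticipate this to be the only truly delicate point of the proof, with the density itself coming as a rather direct consequence of Theorem~\ref{thm:nl_density}.
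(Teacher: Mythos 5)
Your proposal follows essentially the same route as the paper: the density itself comes from Theorem~\ref{thm:nl_density} applied, in its monodromy-equivariant form, to a rank-$2$ Noether--Lefschetz lattice containing the polarization and the half-exceptional class, and the real content is the realization of every connected component of $\mathcal{M}_{2d,t}$ by an admissible triple $(g,r,\delta_{min})$. Be aware that the step you defer as ``bookkeeping'' is exactly where the paper's proof does all of its work: it invokes \cite[Thm. 2.4]{CP} (for Hilbert schemes) and \cite[Thm. 4.2]{MP} (for generalized Kummers) to produce in each component a representative of the form $(S^{[n]}_\epsilon, tH'+k'\Delta)$, and then notes that $tH'$ may be replaced by $tL$ for any class $L$ of the same square without changing the monodromy invariant, so that the freedom in $r$ (hence in the genus of the surface polarization) suffices to hit every component. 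Modulo supplying that input, your argument is complete and matches the paper's.
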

\begin{proof}
We wish to consider the curves $R_{g,\delta_{min}+r}$. As proven in \cite[Thm. 2.4]{CP} for Hilbert schemes and \cite[Thm. 4.2]{MP} for Kummers, for every connected component of $\mathcal{M}_{2d,t}$ there exists $k'\in [-n+1-2\epsilon,n-1+2\epsilon]$ and $(S_{\epsilon},H')$ polarized surface such that $(S^{[n]}_\epsilon, tH'+k'\Delta)$ is in the desired connected component of $\mathcal{M}_{2d,t}$. Clearly, we can replace $tH'$ with $tL$ for any $L\in H^2(S^{[n]},\mathbb{Z})$ with $L^2=H'^2$ as $tL+k'\Delta$ will have the same monodromy invariant of $tH'+k'\delta$. Therefore, for any $t,d$ and any connected component of $\mathcal{M}_{2d,t}$, there are $r$ and $S_{\epsilon}$ such that $(S_{\epsilon}, D_{g,r,\delta_min})$ is in the desired component, and this happens for countably many $g\geq n+\epsilon$. 
\end{proof}

Summing all of these, we are ready to prove Theorem \ref{onethmtounirulethemall}.
\begin{proof}
Let $(X,H)$ be a polarized pair (of the appropriate deformation types) and let $M\subset \mathcal{M}_{2d,t}$ be its component of the moduli space of pairs. By Prop. \ref{prop:sisondensi}, there is a dense subset of $M$ whose points have a family of rational curves of class dual to $H$. As the moduli space of stable rational curves is closed, there are rational curves on every element of $M$. 
\end{proof}

\begin{rmk}\label{rmk:oberdieck2}
\rm{In \cite[Corollary A.3]{OSY} the authors discovered a numerical condition insuring, for projective deformations of $K3^{[n]}$s, the existence of a uniruled divisor ruled by rational curves having primitive class. The condition 
is also necessary if the curves are irreducible (e.g. at the general point in the corresponding moduli space). Nevertheless the numerical condition holds at most in a finite number of cases in each dimension $2n$ 
(even letting the degree of the polarization vary). Therefore in these sporadic cases, the primitive rational curves that we construct in Theorem \ref{onethmtounirulethemall}  must then cover coisotropic sub-varieties of codimension > 1.
}
\end{rmk}

For higher-dimensional subvarieties the deformation theory can either be understood in the smooth case (cf. \cite{LP} which generalizes the Lagrangian case, done in \cite{Vstab}), but it is not sufficient to conclude the existence for a general $X$, or it is very difficult to control in the singular case (see \cite{Lehn} for some partial results in this direction). 
Our hope is that, with some further work, this new approach via density can be successfully used to obtain the existence of 
constant cycle isotropic subvarieties of dimension $\geq 2$, a problem which seems to be a challenging one.



%
\section{Fano varieties of lines of special cubic fourfolds}
%

The goal of the section is to observe that it is possible to deduce immediately from Theorem \ref{thm:nl_density} a generalization of some results obtained by Hassett on the Fano variety of lines of special cubic fourfolds. We need to recall the basic definitions and results.

A cubic fourfold is smooth cubic fourfold hypersurface in $\mathbb P^5$.
 We consider the coarse moduli space $\mathcal C$  parametrizing cubic fourfolds.
Following Hassett \cite{Hass} a cubic fourfold $X$  is said to be {\it special} if it contains an algebraic surface not homologous to a complete intersection. We collect many of Hassett's result in the following statement. 
\begin{thm}\label{thm:atuttohass}
\begin{enumerate}
\item[(i)](see \cite[Theorem 3.1.2 and Proposition 3.2.4]{Hass}) A cubic fourfold $X$ is special (of discriminant $d$) if and only if the lattice
$H^4(X,\mathbb Z)\cap H^{2,2}(X)$ contains a primitive lattice 
of rank 2 and discriminant $d$,  which contains the
class $h^2$, where $h$ denotes the hyperplane class. 
\item[(ii)](see \cite[Theorem 4.3.1]{Hass}) Let $d\geq 8$ be an integer. The set $\mathcal C_d$ of special cubic fourfolds of discriminant $d$ is not empty iff $d\equiv 0,2 (\textrm{mod}\ 6)$, 
\item[(iii)](see \cite[Theorem 3.2.3]{Hass}) Assume $d\geq 8$ is an integer : $d\equiv 0,2 (\textrm{mod}\ 6)$. Then the set $\mathcal C_d$ is an irreducible algebraic divisor of $\mathcal C$.
\end{enumerate}
\end{thm}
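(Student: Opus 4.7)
The plan is to establish each of the three items separately, relying on Hodge theory of cubic fourfolds and lattice theory; the result is classical (due to Hassett), but it is instructive to trace the main steps.

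For item (i), I would start from the Hodge decomposition of $H^4(X,\mathbb C)$, whose numerical type is $(0,1,20,1,0)$, with intersection form of signature $(21,2)$. The crucial input is the integral Hodge conjecture for cubic fourfolds, established by Voisin, which guarantees that every integral $(2,2)$-class is represented by an algebraic cycle. Consequently, specialness is equivalent to $H^{2,2}(X,\mathbb Z):=H^4(X,\mathbb Z)\cap H^{2,2}(X)$ having rank at least $2$, and the class of a surface $S$ not homologous to a complete intersection provides a class independent of $h^2$. The discriminant $d$ is then defined as the discriminant of the saturated rank $2$ sublattice of $H^{2,2}(X,\mathbb Z)$ generated (up to saturation) by $h^2$ and $[S]$.

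For item (ii), the argument is purely lattice-theoretic. One identifies $H^4(X,\mathbb Z)$ with the odd unimodular lattice $I_{21,2}$, with $(h^2)^2=3$, and computes the primitive cohomology $H^4_{prim}(X,\mathbb Z)=(h^2)^\perp$. One then applies Nikulin's theorem on primitive embeddings of lattices into unimodular lattices to decide when the abstract rank-$2$ lattice $K_d$ of discriminant $d$ containing a distinguished vector of square $3$ representing $h^2$ embeds primitively into $H^4(X,\mathbb Z)$. The congruence $d\equiv 0,2\pmod 6$ arises as the necessary and sufficient arithmetic condition on the discriminant form; the realization of the lattice embedding by an actual cubic fourfold then follows from surjectivity of the period map for cubic fourfolds, due to Laza and Looijenga.

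For item (iii), I would invoke Voisin's global Torelli theorem for cubic fourfolds, identifying $\mathcal C$ with an open subset of a locally Hermitian symmetric quotient $\Gamma\backslash \mathcal D$, where $\mathcal D$ is the appropriate $20$-dimensional period domain attached to $H^4_{prim}$ and $\Gamma$ is the monodromy group. The locus $\mathcal C_d$ then corresponds to a Heegner divisor: the image of periods orthogonal to a primitive class in $H^4_{prim}(X,\mathbb Z)$ of discriminant $d$. Its irreducibility reduces to the statement that $\Gamma$ acts transitively on the set of such primitive classes of fixed discriminant, which follows from Eichler's criterion on the lattice $H^4_{prim}(X,\mathbb Z)$ (or from a direct analysis of the monodromy action).

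The main technical obstacle is item (ii): one has to turn abstract lattice embeddings into actual cubic fourfolds, combining Nikulin's theory with surjectivity of the period map, and the congruence analysis giving $d\equiv 0,2 \pmod 6$ has to be sharp in both directions, excluding spurious small discriminants via the bound $d\geq 8$.
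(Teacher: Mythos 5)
The paper does not prove this theorem: it is a verbatim recollection of Hassett's results, with each item simply cited to the corresponding statement in \cite{Hass} (Theorem 3.1.2 and Proposition 3.2.4 for (i), Theorem 4.3.1 for (ii), Theorem 3.2.3 for (iii)). So your sketch has to be judged against the standard arguments in the literature rather than against anything written here. As an outline it is broadly the right one --- (i) via the Hodge conjecture for $(2,2)$-classes, (ii) via lattice embeddings plus a statement about the image of the period map, (iii) via Torelli, the Heegner-divisor description and transitivity of the monodromy (Eichler's criterion) --- but one step is genuinely wrong and load-bearing.

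In item (ii) the claim that ``the realization of the lattice embedding by an actual cubic fourfold then follows from surjectivity of the period map'' fails: the period map for cubic fourfolds is an open immersion but is \emph{not} surjective; by Laza and Looijenga its image is the complement of the two Heegner divisors $\mathcal{D}_2\cup\mathcal{D}_6$. This non-surjectivity is precisely why the discriminants $d=2$ and $d=6$, which satisfy the congruence $d\equiv 0,2\ (\mathrm{mod}\ 6)$, must be excluded, i.e.\ why the hypothesis $d\geq 8$ appears at all; it is not a ``spurious small discriminant'' removed by the lattice analysis. The correct way to finish is: for $d\geq 8$ the divisor $\mathcal{D}_d$ is irreducible (by the same monodromy-transitivity you invoke in (iii)) and distinct from $\mathcal{D}_2$ and $\mathcal{D}_6$, hence is not contained in $\mathcal{D}_2\cup\mathcal{D}_6$ and therefore meets the image of the period map. (This route is also anachronistic: Hassett's original existence argument predates Laza--Looijenga.) Two smaller points: in (i) the full integral Hodge conjecture is more than is needed --- the rational $(2,2)$ Hodge conjecture (Zucker), which is what Hassett uses, already produces an algebraic surface not homologous to a complete intersection once $H^{2,2}(X)\cap H^4(X,\mathbb Z)$ has rank at least $2$; and the Hodge numbers of $H^4$ are $(0,1,21,1,0)$, so $b_4=23$, not $(0,1,20,1,0)$, which is inconsistent with the signature $(21,2)$ you correctly state.
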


On the other hand it is well known, thanks to Beauville and Donagi \cite[Proposition 2]{BD}, that the Fano variety of lines $F(X)$ on a cubic fourfold $X$ is an IHS variety deformation equivalent to the Hilbert scheme of two points on a K3 surface. Varying the cubic fourfold we get a complete family of such deformations, {\it i.e.} a whole connected component of the relevant moduli space. 

Moreover they showed (see \cite[Proposition 4]{BD}) that the natural Abel-Jacobi map 
yields an isomorphism of  Hodge structures
$$
 H^4(X, \mathbb Z) \to H^2 (F(X), \mathbb Z).
$$
More precisely (see \cite[Proposition 6]{BD})
we have an isomorphism of polarized Hodge structures
\begin{equation}\label{eq:iso}
 H^4(X, \mathbb Z) (-1)_{h^2} \to H^2 (F(X), \mathbb Z)_{g}
\end{equation}
where $g$ is the class of the hyperplane section of $F(X)$ in the Pl\"ucker embedding and $H^2 (F(X), \mathbb Z)_{g}$ (respectively $H^4(X, \mathbb Z) (-1)_{h^2}$) denotes the classes orthogonal to $g$ 
(respectively the classes orthogonal to $h^2$ in $H^4(X, \mathbb Z)$ endowed with the opposite sign of the intersection form).
Finally recall that by Voisin \cite{VoiTor} we know that the natural period map for cubic fourfold is a open immersion. 

The question is to understand when $F(X)$ is {\it isomorphic to}  (and not only deformation of) the Hilbert scheme of 2 points on a $K3$. 
Hassett proved the following necessary condition.

%
%
%
%

\begin{prop}[Proposition 6.1.3, \cite{Hass}]\label{prop:hass}
 Assume that the Fano variety of a generic special cubic
fourfold of discriminant d is isomorphic to $S^{[2]}$ for some K3 surface $S$. Then
there exist positive integers $m$ and a such that $d = 2\frac{m^2+m+1}{
a^2}$.
 \end{prop}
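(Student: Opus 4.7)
The strategy is to translate the hypothesis $F(X)\cong S^{[2]}$ into a lattice-theoretic constraint on $\Pic(F(X))$, extract a Diophantine equation, and then match Hassett's discriminant with the degree of the associated $K3$ surface.

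First I would analyze the Picard lattice. Since $X$ is assumed generic in $\mathcal C_d$, the Picard rank of $F(X)$ equals $2$. Combining $F(X)\cong S^{[2]}$ with the orthogonal decomposition $\Pic(S^{[2]})=\Pic(S)\oplus\mathbb Z\delta$ (with $\delta^{2}=-2$), this forces $\Pic(S)$ to have rank one, generated by a primitive ample class $L$ with $L^{2}=2n$ for some positive integer $n$. Thus $\Pic(F(X))=\mathbb Z L\oplus\mathbb Z\delta$ with Gram matrix $\mathrm{diag}(2n,-2)$.

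Next I would exploit the two intrinsic invariants of the Pl\"ucker polarization $g$ coming from the Beauville-Donagi construction \cite{BD}: its Beauville-Bogomolov square is $g^{2}=6$ and its divisibility in $\Lambda=H^{2}(F(X),\mathbb Z)$ equals $2$. Writing $g=\alpha L+\beta\delta$, primitivity of $g$ gives $\gcd(\alpha,\beta)=1$, and the square condition gives $n\alpha^{2}-\beta^{2}=3$. Since $L$ is primitive in the unimodular lattice $H^{2}(S,\mathbb Z)$, the form $(L,-)$ is surjective onto $\mathbb Z$, so $(g,\Lambda)=\gcd(\alpha,2\beta)\mathbb Z$. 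The divisibility condition $(g,\Lambda)=2\mathbb Z$ then forces $\gcd(\alpha,2\beta)=2$, which together with $\gcd(\alpha,\beta)=1$ implies that $\alpha$ is even and $\beta$ is odd. Setting $\alpha=2a$ and $\beta=2m+1$, the equation $n\alpha^{2}-\beta^{2}=3$ becomes $4na^{2}=(2m+1)^{2}+3=4(m^{2}+m+1)$, whence
\[
na^{2}=m^{2}+m+1.
\]

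It remains to show that Hassett's discriminant on the $X$-side equals $d=2n$; this is the step I expect to be the main obstacle. One clean way to establish it is to invoke the Beauville-Donagi isometry between $(h^{2})^{\perp}\subset H^{4}(X,\mathbb Z)$ and $g^{\perp}\subset\Lambda$ (with sign flip), combined with a careful lattice computation comparing the saturated rank-two sublattices $K_{d}\subset H^{4}(X,\mathbb Z)$ and $\Pic(F(X))\subset\Lambda$, keeping track of the fact that $h^{2}$ has divisibility one in $H^{4}(X,\mathbb Z)$ while $g$ has divisibility two in $\Lambda$. The resulting identity $|\mathrm{disc}(\Pic(F(X)))|=2d$, combined with $|\mathrm{disc}(\mathbb Z L\oplus\mathbb Z\delta)|=4n$, yields $d=2n$ and therefore $d=\dfrac{2(m^{2}+m+1)}{a^{2}}$, as claimed.
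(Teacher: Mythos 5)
The paper does not prove this statement; it is quoted verbatim from Hassett (Proposition 6.1.3 of \cite{Hass}), so there is no internal proof to compare against. Your reconstruction is essentially Hassett's original argument and is correct: the rank-two Picard lattice $\mathbb Z L\oplus\mathbb Z\delta$, the constraints $g^2=6$ and $\div(g)=2$ from Beauville--Donagi, and the resulting equation $na^2=m^2+m+1$ are exactly the ingredients he uses. The one step you only sketch --- the identity $|\mathrm{disc}(\Pic(F(X)))|=2d$ --- is indeed true and follows by the computation you indicate: writing $K_d$ with Gram matrix $\left(\begin{smallmatrix}3 & x\\ x & y\end{smallmatrix}\right)$ and transporting a basis through the Abel--Jacobi isomorphism (which rescales the $h^2$-direction, since $(h^2)^2=3$ becomes $g^2=6$, and flips the sign on the primitive part) yields the Gram matrix $\left(\begin{smallmatrix}6 & 2x\\ 2x & (2x^2-d)/3\end{smallmatrix}\right)$ of determinant $-2d$, so $4n=2d$ and $d=2n$ as you claim.
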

 
Then he also obtained the following sufficient condition. 
 \begin{thm}[Theorem 6.1.4, \cite{Hass}]
Assume that $d = 2(m^2 + m + 1)$ where $m$ is an integer $\geq 2$.
Then the Fano variety of a generic special cubic fourfold X of discriminant
$d$ is isomorphic to $S^{[2]}$, where $S$ is a K3 surface.
 \end{thm}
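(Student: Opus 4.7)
The plan is period-theoretic, relying on the Beauville--Donagi Hodge isometry \eqref{eq:iso} together with the global Torelli theorem \ref{thm:corhuy} for IHS manifolds. The strategy is to use the numerical condition $d=2(m^2+m+1)$ to exhibit an $S^{[2]}$ whose Hodge structure matches that of $F(X)$, conclude birationality via Torelli, and finally upgrade to an isomorphism using the Pl\"ucker polarization.

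First, I would transfer the question to the Fano side. Via \eqref{eq:iso}, the rank-two algebraic sublattice $K_d\subset H^4(X,\mathbb{Z})$ containing $h^2$ of discriminant $d$ corresponds to a primitive rank-two algebraic sublattice $K_d'\subset H^2(F(X),\mathbb{Z})$ containing the Pl\"ucker class $g$ (with $g^2=6$ for the Beauville--Bogomolov form) and of discriminant $d$. The theorem reduces to realizing $K_d'$ as a primitive rank-two sublattice of the $K3^{[2]}$-lattice $\Lambda=U^{\oplus 3}\oplus E_8(-1)^{\oplus 2}\oplus\langle -2\rangle$ which coincides with $\mathrm{NS}(S^{[2]})$ for some polarized K3 surface $S$, in such a way that $g$ corresponds to an ample class on $S^{[2]}$.

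The key Diophantine input is the following. Writing the candidate polarization on $S^{[2]}$ as $cH_S^{(2)}-b\delta$, where $H_S$ is a primitive polarization of degree $2n$ on $S$ and $\delta\in H^2(S^{[2]},\mathbb Z)$ is the exceptional class (with $\delta^2=-2$ and divisibility $2$), the identity $g^2=6$ becomes $nc^2-b^2=3$. The numerical shape $d=2(m^2+m+1)$ with $m\ge 2$ is precisely what allows an integer solution $(n,b,c)$ producing a primitive embedding $K_d'\hookrightarrow\Lambda$ whose orthogonal complement, viewed as a weight-two Hodge sub-structure via the Hodge structure on $F(X)$, is Hodge-isometric to the transcendental lattice of $S^{[2]}$ for the chosen polarized K3. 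Combined with the genericity of $X$ in $\mathcal C_d$, this produces a Hodge isometry $H^2(F(X),\mathbb Z)\cong H^2(S^{[2]},\mathbb Z)$ which, after composition with a monodromy operator, can be arranged to be a parallel-transport operator (possible because $F(X)$ and $S^{[2]}$ lie in the same deformation class by Beauville--Donagi). Applying Theorem \ref{thm:corhuy} then yields a birational equivalence $F(X)\dra S^{[2]}$.

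The main obstacle is upgrading birationality to an honest isomorphism. Since $g$ is very ample on $F(X)$ through the Pl\"ucker embedding, one must show that the class $cH_S^{(2)}-b\delta$ transported to $S^{[2]}$ is ample rather than merely nef; equivalently, no MBM class (of Beauville--Bogomolov square $-2$ or $-10$) in $H^2(S^{[2]},\mathbb Z)$ should be orthogonal to it in a way that forces a Mukai flop or a divisorial contraction. For generic $X\in \mathcal C_d$ the Picard rank of $F(X)$ is exactly two, so the only possible walls arise from classes already contained in $K_d'$, and a direct enumeration of those using the numerical form $d=2(m^2+m+1)$ with $m\ge 2$ shows that the candidate polarization sits in the interior of the ample cone of $S^{[2]}$. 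Consequently, the birational map $F(X)\dra S^{[2]}$ extends to an isomorphism, as required.
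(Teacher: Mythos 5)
This statement is not proved in the paper at all: it is Hassett's Theorem~6.1.4, quoted verbatim with the citation \cite{Hass}, and the authors use it only as background for their own density refinement (Theorem \ref{thm:hass++}). So there is no in-paper proof to compare against; your proposal can only be judged against Hassett's original argument, which, notably, predates Verbitsky's global Torelli theorem and therefore cannot take the route you take.

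On its own merits, your outline is a plausible modern reconstruction, but it has two genuine gaps. First, the existence of the polarized K3 surface $S$ and the lattice matching are asserted rather than proved: you need to (i) show that the transcendental Hodge structure of $F(X)$ embeds into the K3 lattice with orthogonal complement containing a class of the right positive square (this is exactly Hassett's ``associated K3 surface'' condition, and the hypothesis $d=2(m^2+m+1)$ must be fed into an explicit Diophantine/lattice computation producing the solution $g\mapsto 2f-(2m+1)\delta$ with $n=m^2+m+1$, $nc^2-b^2=3$, and matching discriminants so that $K_d'\cong\langle f,\delta\rangle$ primitively); none of this is carried out. Second, and more seriously, the upgrade from birational to isomorphic rests entirely on the sentence ``a direct enumeration \ldots shows that the candidate polarization sits in the interior of the ample cone of $S^{[2]}$.'' That enumeration is the actual content of the step: one must check the class $2f-(2m+1)\delta$ against the wall-divisor/MBM classification of the ample cone of $S^{[2]}$ for a Picard-rank-one $S$ of degree $d$ (square $-2$ and $-10$ classes with the appropriate divisibilities), and it is not a priori clear that it holds for every $m\ge 2$ rather than after replacing $S^{[2]}$ by a different birational model (which need not be isomorphic to a Hilbert square). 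Until that verification is done, your argument only yields birationality --- which, as the paper's own Theorem \ref{thm:hass++} and the remark comparing it with \cite{DM} make clear, is a strictly weaker conclusion than Hassett's.
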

 
 Our main result is the following. 
 
 \begin{thm}\label{thm:hass++}
For every integer $d$ such that the set $\mathcal C_d$ of special cubic fourfolds  of discriminant
$d$ is not empty, those whose
 Fano variety of lines is birational to a $K3^{[2]}$, of a fixed degree $2e_0$, are dense in the euclidean topology.
 \end{thm}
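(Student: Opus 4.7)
The plan is to combine the Beauville--Donagi Hodge isometry \eqref{eq:iso}, Voisin's Torelli theorem for cubic fourfolds \cite{VoiTor}, the global Torelli theorem for IHS varieties (Theorem~\ref{thm:corhuy}), and the density result Theorem~\ref{thm:nl_density} applied to a suitable Noether--Lefschetz sub-locus. First I transfer the question to the IHS side: the Pl\"ucker polarization $g$ on $F(X)$ has $g^2=6$ and divisibility $2$ in $\Lambda=H^2(F(X),\Z)$, and by Voisin's Torelli the period map gives an open immersion of (a cover of) $\mathcal{C}$ into a connected component of $\mathfrak{M}^+_{g^\perp}$. Under this identification, Theorem~\ref{thm:atuttohass}(i) identifies $\mathcal{C}_d$ with the Noether--Lefschetz divisor $\mathcal{D}_d$ parametrizing those $Y$ whose Picard lattice primitively contains a fixed rank-$2$ sublattice $K_d=\langle g,\alpha\rangle$ of signature $(1,1)$ and discriminant $d$, where $\alpha$ is a class of negative square corresponding, via Beauville--Donagi, to the additional algebraic surface on $X$.

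Next I translate birationality into a further Noether--Lefschetz condition. By Theorem~\ref{thm:corhuy}, $Y$ is birational to $S^{[2]}$ for a projective K3 surface $S$ of degree $2e_0$ if and only if there is a parallel-transport Hodge isometry $H^2(Y,\Z)\cong H^2(S^{[2]},\Z)$; equivalently, $\Pic(Y)$ primitively contains a class $\delta$ with $\delta^2=-2$ and divisibility $2$ (so that $\delta$ lies in the monodromy orbit of the half-exceptional class of a Hilbert square, by Markman's description of the monodromy orbits recalled in Section~2), together with a class of square $2e_0$ inside $\delta^\perp$. Combined with the data of $K_d$, this amounts to prescribing a primitive rank-$3$ sublattice $N=\langle g,\alpha,\delta\rangle\subset\Lambda$ of signature $(1,2)$.

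Finally I apply Theorem~\ref{thm:nl_density} taking $L^\perp=K_d$, so that $\mathfrak{M}^0_L$ is a connected component of $\mathcal{D}_d$, and with $N$ as above. The signature hypothesis holds since the positive part of $N$ has rank $1$, and the rank hypothesis holds because $b_2(Y)-\rk(N)=23-3=20\geq 3$. Hence $\mathfrak{D}_N$ is dense in $\mathfrak{M}^0_L$, and pulling this density back along Voisin's open immersion yields density of the desired sub-locus inside $\mathcal{C}_d$.

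The main delicate point lies in the lattice-theoretic bookkeeping: the embedding $N\hookrightarrow\Lambda$ must be chosen so that the divisibility of $\delta$ equals $2$ and its intersection numbers with $g$ and $\alpha$ are compatible with $\Pic(S^{[2]})$ for some K3 of degree $2e_0$ containing $K_d$. Once this data is correctly set up, density follows automatically from Theorem~\ref{thm:nl_density}; the non-emptiness of $\mathfrak{D}_N$ for suitable $e_0$ is what makes the statement have content, and is ensured, in the known cases, by Hassett's constructions such as \cite[Theorem~6.1.4]{Hass}.
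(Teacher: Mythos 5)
Your overall strategy is the paper's: transfer $\mathcal C_d$ to a connected component $\mathfrak M^0_L$ of the IHS moduli space via Beauville--Donagi and Voisin's Torelli theorem, with $L=\tilde K^\perp$ for the rank-$2$ lattice $\tilde K=\langle g,\alpha\rangle$ of discriminant $d$, and then apply Theorem~\ref{thm:nl_density} to a Noether--Lefschetz sublattice $N$ encoding ``birational to $S^{[2]}$ with $\deg S=2e_0$''. The gap is in your choice of $N$. You correctly state that the condition to impose is: a class $\delta$ with $\delta^2=-2$ and divisibility $2$ (so that $\delta^\perp$ carries the Hodge structure of a K3 surface $S$), \emph{together with a class of square $2e_0$ in $\delta^\perp$}. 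But your $N=\langle g,\alpha,\delta\rangle$ drops that second class. Since $g$ and $\alpha$ are already algebraic on all of $\mathfrak M^0_L$, your $\mathfrak D_N$ is just the locus where a suitable $\delta$ becomes algebraic, which yields density of Fanos birational to \emph{some} $S^{[2]}$ but gives no control on the degree of $S$. Recovering a square-$2e_0$ class from the saturation of $\langle g,\alpha\rangle$ inside $\delta^\perp$ is exactly the ``lattice bookkeeping'' you defer, and it is not merely delicate: a rank-$2$ lattice determined by $d$ and the two intersection numbers $(g,\delta)$, $(\alpha,\delta)$ need not represent $2e_0$ (there are genuine congruence obstructions), and you give no argument that the finitely many parameters at your disposal suffice for every $e_0$. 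Note also that you cannot repair this by adjoining the square-$2e_0$ class to your $N$: that would produce signature $(2,2)$, violating the hypothesis $a\leq 1$ of Theorem~\ref{thm:nl_density}.

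The paper avoids this entirely by taking $N$ \emph{disjoint from the polarization data}: $N=\langle \mathfrak e\rangle\oplus\langle h_S\rangle$ with $\mathfrak e^2=-2$, $\mathrm{div}(\mathfrak e)=2$, and $h_S\perp\mathfrak e$ of square $2e_0$. This $N$ has rank $2$ and signature $(1,1)$, so Theorem~\ref{thm:nl_density} applies with $b_2-\mathrm{rank}(N)=21\geq 3$, and $\mathfrak D_N$ is by construction contained in the locus of manifolds birational (via Theorem~\ref{thm:corhuy} and Remark~\ref{rmk:puremon2}) to $S^{[2]}$ for a K3 surface $S$ carrying a class of square $2e_0$. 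The key point you missed is that in Definition~\ref{def:nl} the lattice $N$ need not contain the polarization $g$ nor the class $\alpha$ cutting out $\mathcal C_d$ --- those are carried by the ambient component $\mathfrak M^0_L$ --- so the degree-$2e_0$ class can simply be put into $N$ by hand without any representability issue.
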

\begin{proof}
By Voisin's Torelli theorem and the isomorphism of polarized Hodge structures (\ref{eq:iso}) we can see $\mathcal C_d$ as a divisor in the period domain $\Omega_\Lambda$, where $\Lambda\cong H^2((K3)^{[2]},\mathbb Z)$. By \cite[Proposition 3.2.4]{Hass}, up to automorphisms fixing $h^2$, there exists a unique primitive sublattice $K\subset H^4(X,\mathbb Z)$ 
of rank 2 and discriminant $d$,  which contains the
class $h^2$. Let $\tilde K\subset H^2(F(X),\mathbb Z)$ be its image via the isomorphism (\ref{eq:iso}). 
 By Theorem \ref{thm:atuttohass}, items (ii) and (iii),  
$\mathcal C_d$ corresponds, via the period map $\mathfrak M_\Lambda\to \Omega_\Lambda$,  to the image of 
a connected component $\mathfrak M^0_L$, where $L$ is $(\tilde K)^\perp$. Now take $N\subset H^2((K3)^{[2]},\mathbb Z)$ to be the rank 2 sublattice generated by the exceptional class $\mathfrak e$ and by a class $h_S$ orthogonal to $\mathfrak e$ and of square $2e_0$. Notice that $\mathfrak D_N$ is the locus of manifolds birational to $S^{[2]}$, where $S$ is a  $K3$ surface having a positive class of degree $2e_0$.
Then by Theorem \ref{thm:nl_density} the (non-empty) set $\mathfrak D_N$ is dense in $\mathfrak M^0_L$ and we are done.
\end{proof}
\begin{rmk}
\em{
In \cite[Proposition 5.18]{DM}, a similar density statement was proved, namely that there are countable degrees $2e$ for a general $K3$ surface $S$ such that $S^{[2]}\cong F(X)$ for some cubic fourfold $X$ and these cubic fourfolds are dense in the euclidean topology. This result differs from ours in two ways: it is stronger as the Fano of lines on these cubic fourfolds are actually isomorphic to Hilbert schemes and not only birational, however it is at the same time weaker in the sense that we can obtain density also by fixing a degree $2e_0$ for the $K3$ without letting it vary.} 
\end{rmk}

 \begin{rmk}
 \em{By Proposition \ref{prop:hass}, for  all integers $d \not= 2\frac{m^2+m+1}{
a^2}$ the euclidian density given by Theorem \ref{thm:hass++} is the best result one can obtain.}
 \end{rmk}






\end{document}